\newtheorem{theorem}{Theorem}[section]
\newtheorem{proposition}[theorem]{Proposition}
\newtheorem{lemma}[theorem]{Lemma}
\newtheorem{corollary}[theorem]{Corollary}
\theoremstyle{definition}
\newtheorem{definition}[theorem]{Definition}
\newtheorem{eg}[theorem]{Example}
\newtheorem{remark}[theorem]{Remark}
\title{Hodge structures of the surface of planes in a cubic $5$-fold}
\author{Chenpeng Feng}
\date{}
\begin{document}

\maketitle

\begin{abstract}
    We study the geometry of the moduli space of planes in a general cubic $5$-fold and its deformation. We show that this moduli space is a smooth projective surface whose canonical bundle is ample. We also show that the variation of degree 1 Hodge structures of a particular family of such surfaces is maximal. The main technical input is the hyper-Kähler geometry and an elaborated calculation of the Hodge numbers of such surfaces\\
    \textbf{Keywords}: Cubic hypersurfaces, Hyper-Kähler manifolds, Schubert calculus, Variation of Hodge structures
\end{abstract}
\tableofcontents
\section{Introduction}

The moduli space of linear subspaces in a projective hypersurfaces has been extensively studied in the litterature~\cite{Bai, BD, DM, KCorr}, starting from the classical result of Cayley and Salmon which states that every smooth projective cubic surface has exactly $27$ lines. In the language of moduli spaces, it says that the moduli space of lines in a cubic surface is a finite set of 27 points.. Another important example is the moduli space of lines in a cubic $4$-fold. As is proven in~\cite{BD}, this moduli space is a hyper-Kähler manifold, a fundamental building block in the classification of algebraic varieties.

In this article, we consider the following explicit moduli space of such type. Let $Y\subset \mathbb P^6$ be a general cubic $5$-fold. Let $\Sigma$ be the moduli space of planes in $Y$. As is proved in Section~\ref{SectionSurface}, $\Sigma$ is a smooth projective surface of general type. The study of the surface $\Sigma$ is motivated by a result of Iliev and Manivel~\cite[Proposition 4]{IlievManivel}, which states that the surface $\Sigma$ is a Lagrangian subvariety of the Fano variety of lines of a cubic $4$-fold $X$ if the cubic $5$-fold is chosen general.

Our first result is the calculation of the Betti numbers of the smooth surface $\Sigma$.
\begin{theorem}[Theorem~\ref{ThmBettiNumbers}]\label{ThmBettiNumbersIntro}
    The Betti numbers of the surface $\Sigma$ are as follows.

 (i) $b_0(\Sigma) = b_4(\Sigma) = 1$. 
 
    (ii) $b_1(\Sigma) = b_3(\Sigma) = 42$. 
    
    (iii) $b_2(\Sigma) = 13123$.
\end{theorem}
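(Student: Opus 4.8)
The plan is to realize $\Sigma$ as the zero locus of a regular section of a homogeneous vector bundle on a Grassmannian, and then to extract three independent pieces of data — connectedness, the Euler characteristic, and the irregularity — from which every Betti number follows. Planes in $\mathbb{P}^6$ are parametrized by $G=\mathrm{Gr}(3,7)$, with $\dim G=12$ and tautological subbundle $\mathcal{S}$ of rank $3$; the defining cubic of $Y$ restricts, plane by plane, to a section $s$ of $E:=\operatorname{Sym}^3\mathcal{S}^\vee$, a bundle of rank $\binom{5}{3}=10$. For general $Y$ this section is regular with smooth zero locus $\Sigma=Z(s)$ of dimension $12-10=2$, as established in Section~\ref{SectionSurface}. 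I would stress at the outset that $E$ is globally generated but \emph{not} ample: on a line of $G$ lying in a sub-Grassmannian, $\mathcal{S}^\vee$ acquires a trivial quotient, and hence so does $\operatorname{Sym}^3\mathcal{S}^\vee$. Consequently the Lefschetz theorem for ample vector bundles does not force $H^1(\Sigma)\cong H^1(G)=0$, and indeed $b_1$ turns out to be large; this is precisely why an honest cohomological computation cannot be avoided.

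The extreme and middle numbers are handled by a Chern-class computation together with the Koszul resolution
\[
0\to\wedge^{10}E^\vee\to\cdots\to\wedge^2 E^\vee\to E^\vee\to\mathcal{O}_G\to\mathcal{O}_\Sigma\to 0 .
\]
For $b_0=b_4=1$ it suffices to see $\Sigma$ is connected, i.e. $H^0(\Sigma,\mathcal{O}_\Sigma)=\mathbb{C}$; the hypercohomology of the Koszul complex reduces this to the vanishing of the relevant $H^\bullet(G,\wedge^p E^\vee)$ away from $H^0(\mathcal{O}_G)$, which Bott's theorem supplies. For the Euler characteristic I would use $\chi_{\mathrm{top}}(\Sigma)=\int_\Sigma c_2(T_\Sigma)$: from the normal sequence $0\to T_\Sigma\to T_G|_\Sigma\to E|_\Sigma\to 0$ one gets $c(T_\Sigma)=\bigl(c(T_G)/c(E)\bigr)|_\Sigma$, and $\int_\Sigma(-)=\int_G(-)\cdot c_{10}(E)$. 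Since $T_G=\mathcal{S}^\vee\otimes\mathcal{Q}$ and the Chern classes of $\mathcal{S}$ are Schubert classes, this is a lengthy but mechanical Schubert-calculus computation on $\mathrm{Gr}(3,7)$, which I expect to yield $\chi_{\mathrm{top}}(\Sigma)=13041$.

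The crux is the irregularity $q=h^1(\Sigma,\mathcal{O}_\Sigma)$. I would read it off the spectral sequence $E_1^{-p,q}=H^q(G,\wedge^p E^\vee)\Rightarrow H^{q-p}(\Sigma,\mathcal{O}_\Sigma)$ attached to the Koszul complex, with $E^\vee=\operatorname{Sym}^3\mathcal{S}$. Each exterior power $\wedge^p\operatorname{Sym}^3\mathcal{S}$ must first be decomposed into Schur functors $\mathbb{S}_\lambda\mathcal{S}$ (a plethysm computation), after which Borel--Weil--Bott on $G$ determines $H^\bullet(G,\mathbb{S}_\lambda\mathcal{S})$ summand by summand. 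Collecting the contributions with $q-p=1$ gives $q$, and I expect a single weight to survive, producing $q=21$; Hodge symmetry then gives $b_1=2q=42$, and Poincaré duality gives $b_3=42$. Finally $b_2$ is forced formally: from $\chi_{\mathrm{top}}=b_0-b_1+b_2-b_3+b_4=2-2b_1+b_2$ we obtain $b_2=\chi_{\mathrm{top}}-2+2b_1=13041-2+84=13123$.

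The main obstacle is the plethysm-plus-Bott bookkeeping: for $p$ up to $10$ the bundle $\wedge^p\operatorname{Sym}^3\mathcal{S}$ decomposes into many Schur functors of a rank-$3$ bundle, and one must run Bott's vanishing/singularity analysis on each, tracking the degree shift $q-p$ as well as possible cancellations among the $E_1$ terms. Controlling these cancellations — verifying that the surviving contributions really do compute $q$ at $E_\infty$ — is where the delicacy lies; by comparison the Schubert-calculus evaluation of $\chi_{\mathrm{top}}$, though long, is comparatively routine.
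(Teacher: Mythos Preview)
Your Chern-class computation of $\chi_{\mathrm{top}}(\Sigma)=c_2(\Sigma)=13041$ is exactly what the paper does, and the final arithmetic extracting $b_2$ is the same. The real divergence is in the irregularity. The paper never attempts your Koszul--plethysm--Borel--Weil--Bott computation of $q=h^1(\mathcal{O}_\Sigma)$; instead it imports Collino's theorem that the Abel--Jacobi map $\mathrm{Alb}(\Sigma)\to J^5(Y)$ is an isomorphism, so that $b_1(\Sigma)=b_3(\Sigma)=b_5(Y)$, and then reads off $b_5(Y)=42$ from the standard Hodge numbers of a cubic fivefold. Connectedness is likewise outsourced, to a theorem of Debarre--Manivel rather than to Bott vanishing. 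Your route is self-contained and purely representation-theoretic, but the bookkeeping you correctly flag as the main obstacle --- decomposing $\wedge^p\mathrm{Sym}^3\mathcal{S}$ for $p\le 10$, running Bott on each summand, and controlling the spectral-sequence differentials --- is entirely sidestepped by the paper at the price of two external geometric inputs. What the paper's approach buys, beyond brevity, is a conceptual identification $H^1(\Sigma,\mathbb{Q})\cong H^5(Y,\mathbb{Q})$ of Hodge structures; what yours would buy is independence from Collino's argument and a method that in principle also yields $h^{2,0}$ directly, without passing through Noether's formula.
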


Since $\Sigma$ is a smooth projective surface, its cohomology groups admits a Hodge decomposition (Section~\ref{SubsectionHodgeDecomp}). Our second result is the calculation of the Hodge numbers.
\begin{theorem}[Theorem~\ref{ThmHodgeNumbers}]\label{ThmHodgeNumbersIntro}
    The Hodge numbers of the surface $\Sigma$ are as follows. 

(i) $h^{2, 0}(\Sigma) = h^{0, 2}(\Sigma) =3233.$

(ii) $h^{1, 1}(\Sigma) = 6657$.

(iii) $h^{1,0}(\Sigma)=h^{0, 1}(\Sigma)=21$.
\end{theorem}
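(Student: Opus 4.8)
The plan is to reduce the computation of the full Hodge diamond to a single numerical invariant, the holomorphic Euler characteristic $\chi(\mathcal{O}_\Sigma)$, using the general structure of Hodge theory on a smooth projective surface together with the Betti numbers already established in Theorem~\ref{ThmBettiNumbers}. For such a surface the Hodge decomposition reads $h^{1,0}=h^{0,1}=q$, $h^{2,0}=h^{0,2}=p_g$, and $b_2 = 2p_g + h^{1,1}$. From $b_1(\Sigma)=42$ and the symmetry $h^{1,0}=h^{0,1}$ one reads off $q = h^{1,0}=h^{0,1}=21$ at once, which gives assertion (iii). Since $\chi(\mathcal{O}_\Sigma)=1-q+p_g$, knowing $\chi(\mathcal{O}_\Sigma)$ determines $p_g=h^{2,0}=h^{0,2}$, and then $h^{1,1}=b_2(\Sigma)-2p_g$ follows from Theorem~\ref{ThmBettiNumbers}(iii). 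So everything comes down to computing $\chi(\mathcal{O}_\Sigma)$.

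To compute $\chi(\mathcal{O}_\Sigma)$ I would invoke Noether's formula $\chi(\mathcal{O}_\Sigma) = \frac{1}{12}\bigl(K_\Sigma^2 + e(\Sigma)\bigr)$, where $e(\Sigma)=c_2(\Sigma)$ is the topological Euler number. The number $e(\Sigma)=\sum_i (-1)^i b_i(\Sigma)=1-42+13123-42+1=13041$ is immediate from Theorem~\ref{ThmBettiNumbers}, so the only missing ingredient is the self-intersection $K_\Sigma^2 = c_1(\Sigma)^2$. Here I would use the realization of $\Sigma$ as the zero locus of a regular section of the rank-$10$ vector bundle $E=\mathrm{Sym}^3\mathcal{S}^\vee$ on the Grassmannian $G=G(3,7)$ of planes in $\mathbb{P}^6$, where $\mathcal{S}$ is the tautological rank-$3$ subbundle: the restriction of the defining cubic to a plane is a section of $\mathrm{Sym}^3\mathcal{S}^\vee$, whose vanishing cuts out $\Sigma$, and genericity of $Y$ guarantees the section is regular with $[\Sigma]=c_{10}(E)\in H^{20}(G)$.

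The self-intersection $K_\Sigma^2$ is then a pure Schubert-calculus computation on $G$. From the normal bundle sequence $0\to T_\Sigma \to T_G|_\Sigma \to E|_\Sigma \to 0$ one gets $c(T_\Sigma)=\bigl(c(T_G)/c(E)\bigr)|_\Sigma$; expanding to degree $1$ produces $c_1(\Sigma)=\bigl(c_1(T_G)-c_1(E)\bigr)|_\Sigma$. Squaring and pushing forward along $\Sigma\hookrightarrow G$ converts $K_\Sigma^2$ into the degree-$12$ integral $\int_G \bigl(c_1(T_G)-c_1(E)\bigr)^2 \cdot c_{10}(E)$ over the $12$-dimensional Grassmannian. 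The Chern classes of $T_G$ and of $E=\mathrm{Sym}^3\mathcal{S}^\vee$ are written in terms of the Chern roots of $\mathcal{S}^\vee$, expanded in the Schubert basis by the Pieri and Giambelli rules, and evaluated against the class of a point. Carried out, this yields $K_\Sigma^2=25515$, hence $\chi(\mathcal{O}_\Sigma)=(25515+13041)/12=3213$ and $p_g=\chi(\mathcal{O}_\Sigma)-1+q=3233$, giving (i), and finally $h^{1,1}=13123-2\cdot 3233=6657$, giving (ii).

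The main obstacle is the sheer size of the Schubert-calculus bookkeeping: $E$ has rank $10$, so $c_{10}(E)$ is a top Chern class of a symmetric-power bundle whose expansion in the Schubert basis involves many terms, and pairing a degree-$12$ class against the fundamental class of $G(3,7)$ demands careful control of the multiplication table. I would guard against arithmetic error by cross-checking $\chi(\mathcal{O}_\Sigma)$ via the Koszul resolution $0\to \wedge^{10}E^\vee\to\cdots\to E^\vee\to\mathcal{O}_G\to\mathcal{O}_\Sigma\to 0$, computing $\chi(\mathcal{O}_\Sigma)=\sum_{i=0}^{10}(-1)^i\chi(G,\wedge^i E^\vee)$ by decomposing each $\wedge^i\mathrm{Sym}^3\mathcal{S}$ into Schur functors and applying the Borel--Weil--Bott theorem; agreement of the two computations pins down $p_g$ with confidence.
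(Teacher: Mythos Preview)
Your proposal is correct and follows essentially the same route as the paper: realize $\Sigma$ as the zero locus of a regular section of $\mathrm{Sym}^3\mathcal{U}^\vee$ on $\mathrm{Gr}(3,7)$, use the normal bundle sequence to write $c_1(\Sigma)^2$ as $\int_{\mathrm{Gr}(3,7)}\bigl(c_1(T_G)-c_1(E)\bigr)^2\cdot c_{10}(E)$, evaluate this by Schubert calculus to obtain $K_\Sigma^2=25515$, and then combine Noether's formula with $q=21$ (equivalently $b_1=42$, both coming from Collino's Abel--Jacobi isomorphism) and $b_2=13123$ to read off $p_g=3233$ and $h^{1,1}=6657$. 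The only differences are cosmetic---you extract $c_2(\Sigma)=13041$ from the Betti numbers rather than recompute it, and you add a Koszul/Borel--Weil--Bott cross-check for $\chi(\mathcal{O}_\Sigma)$ that the paper does not include.
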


Our third result is the study of the variation of Hodge structures of $\Sigma$. Let $B$ be the moduli of cubic $5$-folds $Y\subset \mathbb P^6$ containing a given cubic $4$-fold $Y_4$ inside $\mathbb P^6$. For each element $b\in B$, consider the moduli of planes in the corresponding cubic $5$-fold $Y_b$, we get a family of surfaces parametrized by an open subset of $B$

\begin{theorem}\label{ThmVHSisMaximal}
    The variation of degree $1$ Hodge structures of the above family is maximal. 
\end{theorem}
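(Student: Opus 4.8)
The plan is to prove maximality of the variation of Hodge structures (VHS) of weight $1$—i.e., that the period map for the degree $1$ part has maximal rank, equivalently that the associated infinitesimal period map (Kodaira–Spencer composed with the differential of the period map) is injective on the tangent space to the base $B$ at a general point. Let me sketch how I would set this up.

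First, I would identify the objects concretely. The weight $1$ Hodge structure on $H^1(\Sigma)$ has Hodge numbers $h^{1,0}=h^{0,1}=21$ by Theorem~\ref{ThmHodgeNumbersIntro}, so it gives rise to an abelian variety (the Albanese, or intermediate Jacobian) of dimension $21$, and the period domain is a quotient of the Siegel upper half space $\mathfrak{H}_{21}$. The infinitesimal period map at a point $b \in B$ sends the Kodaira–Spencer class $\kappa \in H^1(\Sigma_b, T_{\Sigma_b})$ to the cup-product map $H^0(\Sigma_b, \Omega^1_{\Sigma_b}) \to H^1(\Sigma_b, \mathcal{O}_{\Sigma_b})$. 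Maximality means that the composite $T_b B \to H^1(\Sigma_b, T_{\Sigma_b}) \to \operatorname{Hom}\big(H^{1,0}(\Sigma_b), H^{0,1}(\Sigma_b)\big)$ is injective at a general $b$. Because the family is cut out by the condition that $Y_b$ contains the fixed cubic fourfold $Y_4 \subset \mathbb{P}^6$, I would first compute $\dim B$: the cubic fivefolds through $Y_4$ form a linear system, so $B$ is an open subset of a projective space of explicitly computable dimension, and its tangent space $T_b B$ is a space of degree $3$ polynomials modulo the ideal of $Y_4$.

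The key computational step is to translate the infinitesimal period map into Jacobian-ring language. Since $\Sigma$ is the Hilbert scheme of planes, its cotangent and tangent sheaves are governed by the normal bundle of a plane $P \subset Y_b$, via $T_{[P]}\Sigma = H^0(P, N_{P/Y_b})$ (the surface structure from Section~\ref{SectionSurface}). I would aim to express the relevant cup-product pairing in terms of residues and the Jacobian ideal of the defining cubic form $F_b$ of $Y_b$, so that injectivity of the infinitesimal period map becomes a statement about multiplication in a graded Artinian Gorenstein ring. Concretely, I expect the differential to factor through the multiplication map $T_b B \otimes H^{1,0}(\Sigma_b) \to H^{0,1}(\Sigma_b)$ induced by polynomial multiplication, and I would reduce maximality to showing this multiplication is injective in the first variable for general $F_b$. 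This is where the Lagrangian relationship with the Fano variety of lines of the cubic fourfold $X$ (Iliev–Manivel) becomes useful: it lets me transport the Hodge-theoretic data to the hyper-Kähler fourfold $F(X)$, whose period map and Jacobian-ring description are classically understood, and compare the two VHS via the Lagrangian embedding $\Sigma \hookrightarrow F(X)$.

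The main obstacle I anticipate is establishing the injectivity of that multiplication map, i.e., controlling the infinitesimal Torelli-type statement at the level of $H^{1,0}$. Unlike the full-weight-$2$ story for surfaces, where Griffiths' residue theory gives a clean symmetrizer-lemma argument, here I must show that \emph{deformations of $Y_b$ preserving $Y_4$} already move the $1$-forms on $\Sigma$ in a linearly independent fashion; there is genuine risk that some deformation directions act trivially on $H^{1,0}(\Sigma)$ because the $h^{1,0}=21$ is small relative to $h^{2,0}$. To handle this I would (a) use a dimension count comparing $\dim B$ with $\dim \mathfrak{H}_{21} = \binom{21+1}{2} = 231$ to confirm maximality is numerically possible, (b) degenerate to a sufficiently symmetric cubic fivefold (e.g. Fermat-type through a special $Y_4$) where the Jacobian ring and the $\Sigma$-geometry can be computed explicitly, verify injectivity there by a direct rank computation, and (c) invoke semicontinuity to conclude injectivity—hence maximality—on a dense open subset of $B$. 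The delicate point throughout is checking that the special fiber remains smooth (so Theorems~\ref{ThmBettiNumbersIntro} and~\ref{ThmHodgeNumbersIntro} still apply) and that the cup-product computation is compatible with the Lagrangian embedding into $F(X)$.
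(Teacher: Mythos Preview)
Your proposal outlines a first-principles attack on infinitesimal Torelli for the family $\{\Sigma_b\}$: compute the Kodaira--Spencer map, translate the cup-product $T_bB \to \operatorname{Hom}(H^{1,0}(\Sigma_b),H^{0,1}(\Sigma_b))$ into Jacobian-ring language, and then degenerate to a Fermat-type cubic where the rank can be checked by hand. This is a reasonable research plan, but it is substantially harder than what the paper does, and your sketch leaves the hardest step --- the explicit rank computation at the special fiber and the Jacobian-ring description of $H^{1,0}(\Sigma)$ itself --- completely open. There is no standard Griffiths residue description for $H^1$ of a Fano scheme of planes, so your step ``express the relevant cup-product pairing in terms of residues and the Jacobian ideal of $F_b$'' would require real work, not just bookkeeping.

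The paper's argument is entirely different and much shorter: it is a black-box application of a result of Bai on Lagrangian families in hyper-K\"ahler manifolds. One uses the Iliev--Manivel embedding $\Sigma_b \hookrightarrow X = F_1(Y_4)$ not merely to ``transport Hodge-theoretic data'' as you suggest, but as the whole mechanism: Bai's theorem says that for a Lagrangian family in a very general hyper-K\"ahler $2n$-fold $X$, the degree~$1$ VHS is automatically maximal provided $b_2(X)_{\mathrm{tr}}\ge 5$ and $h^{1,0}$ of a fiber is less than $2^{\lfloor (b_2(X)_{\mathrm{tr}}-3)/2\rfloor}$. Here $b_2(X)_{\mathrm{tr}}=22$ (since $b_4(Y_4)=23$ and $Y_4$ is very general), so the bound is $2^9=512$, and the Hodge-number computation $h^{1,0}(\Sigma_b)=21$ already done in the paper finishes the proof. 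No Jacobian rings, no degenerations, no semicontinuity.

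What each approach buys: yours would, if carried out, give an intrinsic infinitesimal-Torelli statement independent of the hyper-K\"ahler ambient geometry, and might yield finer information about the differential of the period map. The paper's approach is dramatically more efficient and explains \emph{why} the maximality should be expected --- it is a general phenomenon for Lagrangian families under mild numerical hypotheses --- but it imports the hard work from Bai's paper.
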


 Here, the variation of Hodge structures is called maximal, if the period map is locally an immersion (i.e., the tangent map is generally injective). See Section~\ref{SectionVHS} for detailed definitions.

Let us briefly present the method for the above results. Notice that Theorem~\ref{ThmBettiNumbersIntro} and Theorem~\ref{ThmHodgeNumbersIntro} are also obtained independently in \cite{Mboro} during the preparation of this work, using different methods. Theorem~\ref{ThmVHSisMaximal} is new.

As $\Sigma$ parametrizes the planes in a general cubic $5$-fold $Y\subset \mathbb P^6$, we may regard $\Sigma$ as a subvariety of the Grassmannian $\mathrm{Gr}(3, 7)$ defined as the zero locus of a general section of the vector bundle $\mathrm{Sym}^3\mathcal U^\vee$, where $\mathcal U$ is the tautological subbundle of $Gr(3, 7)$. Since $\mathrm{Gr}(3, 7)$ is of dimension $12$ and $\mathrm{Sym}^3\mathcal U^\vee$ is of rank $10$, the dimension of $\Sigma$ is $2$. Therefore, the tangent bundle $T_\Sigma$ fits into the following short exact sequence
\begin{equation}\label{EqExactSequenceForTangent}
    0 \to T_\Sigma\to T_{Gr(3, 7)}|_\Sigma \to \mathrm{Sym}^3\mathcal U^\vee|_\Sigma \to 0,
\end{equation}
which gives an expression of the canonical bundle $K_\Sigma$ of $\Sigma$ as follows
\[
K_\Sigma = (K_{Gr(3, 7)}\otimes \det \mathrm{Sym}^3\mathcal U^\vee)|_\Sigma.
\]
The right-hand side can be easily seen to be ample (Proposition 2.4).

To calculate the Betti numbers of surface $\Sigma$, we first calculate the Euler characteristic of $\Sigma$ (note that $\Sigma$ is a compact manifold of \emph{real} dimension $4$):
\[
e(\Sigma) = \sum_{i = 0}^4 (-1)^ib_i(\Sigma).
\]
By the Hopf-Poincaré theorem, $e(\Sigma)$ is equal to the top degree Chern class $c_2(\Sigma)$ of the tangent bundle of $T_\Sigma$, viewed as an integer via the natural identification $\int_\Sigma: H^4(\Sigma, \mathbb Z)\cong\mathbb Z$. Basic theory of Chern classes is briefly recalled in Section~\ref{SubsectionChernClasses}. By the exact sequence (\ref{EqExactSequenceForTangent}), we can calculate the Chern class $c_2(\Sigma)$ as follows
$$c_2(\Sigma) = \left( c_1(\mathrm{Sym}^3 U^\vee)^2 - c_1(U^\vee \otimes Q) \cdot c_1(\mathrm{Sym}^3 U^\vee) + c_2(U^\vee \otimes Q) - c_2(\mathrm{Sym}^3 U^\vee) \right) \cdot c_{10}(\mathrm{Sym}^3 U^\vee).$$
By the Schubert calculus that we will briefly recall in Section~\ref{SubsectionSchubert}, the right-hand-side can be calculated by combinatorial methods (see Section~\ref{SectionChernCalculations} for explicit calculations) and the result is $13041$. Now that we know the Euler characteristic, to calculate all the Betti numbers, it suffices to know either $b_1$ or $b_2$. However, a result of Collino~\cite{Collino} shows that $b_1(\Sigma) = 42$. This gives all the Betti numbers of $\Sigma$.

Next, to calculate the Hodge numbers of $\Sigma$, we use the Noether's formula
\[
\chi(\mathcal{O}_\Sigma) = \frac{1}{12}(c_1(\Sigma)^2 + c_2(\Sigma)).
\]
As the calculation of $c_2(\Sigma)$ illustrated in the previous paragraph, $c_1(\Sigma)^2$ can be calculated by using Schubert calculus and the result is $25515$. Hence, $\chi(\Sigma) = 3213$. By the result of Collino again, we get $h^{1,0}(\Sigma) = 21$. Thus, $h^{2, 0}(\Sigma) = 3233$. Combined with the calculations of the Betti numbers, we finally obtain $h^{1,1}(\Sigma) = 6657$. All Hodge numbers are thus calculated.

Finally, let us discuss the result about the variation of Hodge structures of $\Sigma$. Although this result is only related to the family of the surfaces $\Sigma$, our strategy consists of viewing this family as a Lagrangian family of the Beauville-Donagi hyper-Kähler $4$-fold $X$~\cite{IlievManivel, BD}. Then we use a recent result on the Lagrangian families in the hyper-Kähler geometry. By a result of Bai~\cite[Proposition 2]{Bai}, if, in general, a Lagrangian family satisfies some numerical conditions on the Hodge numbers, then the variation of degree $1$ Hodge structures of the Lagrangian family is maximal. The above calculation on the Hodge numbers of the surface $\Sigma$ confirms the numerical conditions needed, and thus the family as in Theorem~\ref{ThmVHSisMaximal}, which is a Lagrangian family in the Beauville-Donagi hyper-Kähler $4$-fold, has a maximal variation of degree $1$ Hodge structures.

\section{Hyper-Kähler manifolds and Lagrangian submanifolds}\label{SectionSurface}

\subsection{Kähler Manifolds and Hodge Numbers}\label{SubsectionHodgeDecomp}
The best way to discuss Hodge theory is through Kähler geometry. In this section, we follow closely the presentation of \cite[Chapter 6]{VoisinHodge} to present the Hodge theory

\begin{definition}
A \textbf{Kähler manifold} is a complex manifold $X$ equipped with a Hermitian metric whose associated $2$-form is closed.
\end{definition}

\begin{definition}
For a Kähler manifold $X$, the \textbf{Dolbeault cohomology groups} $H^{p,q}(X)$ consist of equivalence classes of differential forms of type $(p,q)$, modulo $\overline{\partial}$-exact forms:
$$H^{p,q}(X) = \frac{\ker \overline{\partial}: A^{p,q}(X) \to A^{p,q+1}(X)}{\text{im } \overline{\partial}: A^{p,q-1}(X) \to A^{p,q}(X)},$$
where $A^{p,q}(X)$ denotes the space of smooth differential forms of type $(p,q)$.
\end{definition}

\begin{theorem}[Hodge decomposition (\cite{VoisinHodge})]
    Let $X$ be a compact Kähler manifold. Then there is a canonical decomposition of the de Rham cohomology of $X$ as follows.
    \[
    H^k(X,\mathbb C) = \bigoplus_{p + q = k} H^{p, q}(X),
    \]
    with the complex conjugate identification $H^{p, q}(X) = \overline{H^{q, p}(X)}$.
    Furthermore, we have a canonical isomorphism $H^{p, q}(X) \cong H^q(X, \Omega_X^p)$.
\end{theorem}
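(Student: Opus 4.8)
The plan is to realize both the de Rham cohomology $H^k(X,\mathbb{C})$ and the Dolbeault cohomology $H^{p,q}(X)$ as spaces of harmonic forms for suitable elliptic Laplacians, and then to exploit the Kähler condition to show that a single space of harmonic forms simultaneously carries both structures. First I would fix the Kähler metric and introduce the three second-order operators $\Delta_d = dd^* + d^*d$, $\Delta_\partial = \partial\partial^* + \partial^*\partial$ and $\Delta_{\bar\partial} = \bar\partial\bar\partial^* + \bar\partial^*\bar\partial$, where the formal adjoints are taken with respect to the $L^2$-inner product on smooth forms induced by the metric. Each of these is an elliptic, formally self-adjoint operator on the compact manifold $X$.

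The first main input is the Hodge theorem for elliptic self-adjoint operators: for each such Laplacian $\Delta$, the space $\mathcal{H}^\bullet = \ker\Delta$ of harmonic forms is finite-dimensional, and one has an orthogonal decomposition $A^\bullet(X) = \mathcal{H}^\bullet \oplus \operatorname{im}\Delta$ of the smooth forms. Applied to $\Delta_d$ this yields the de Rham isomorphism $H^k(X,\mathbb{C}) \cong \mathcal{H}^k_{d}$, since every class has a unique $d$-harmonic representative; applied to $\Delta_{\bar\partial}$ it yields $H^{p,q}(X) \cong \mathcal{H}^{p,q}_{\bar\partial}$, the $\bar\partial$-harmonic forms of type $(p,q)$. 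Establishing this theorem rigorously requires the functional-analytic machinery of Sobolev spaces and elliptic regularity, so I would recall this package as a black box rather than reprove it, since that is a substantial project in its own right.

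The heart of the argument, and the place where the Kähler hypothesis is used, is the set of Kähler identities. Writing $L$ for the Lefschetz operator of wedging with the Kähler form $\omega$ and $\Lambda$ for its adjoint, one proves the commutator relations $[\Lambda,\bar\partial] = -i\partial^*$ and $[\Lambda,\partial] = i\bar\partial^*$. The crucial consequence I am after is the identity $\Delta_d = 2\Delta_{\bar\partial} = 2\Delta_\partial$. The standard derivation checks the identities first on the flat model $\mathbb{C}^n$ and then transfers them to $X$ using that a Kähler metric osculates to order two to the flat metric in holomorphic normal coordinates; this osculation, guaranteed by the closedness $d\omega = 0$, is exactly what makes the Kähler case special. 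I expect this computation to be the main obstacle, both because the Kähler identities are delicate to verify and because the reduction to the flat model is where the geometric content is concentrated.

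Given $\Delta_d = 2\Delta_{\bar\partial}$, the decomposition follows formally. The operator $\Delta_{\bar\partial}$ is bihomogeneous, so it preserves the type decomposition $A^k(X) = \bigoplus_{p+q=k} A^{p,q}(X)$; hence its kernel splits as $\mathcal{H}^k = \bigoplus_{p+q=k}\mathcal{H}^{p,q}$. Combining this with the equality $\mathcal{H}^k_d = \mathcal{H}^k_{\bar\partial}$ and the two Hodge isomorphisms gives $H^k(X,\mathbb{C}) = \bigoplus_{p+q=k} H^{p,q}(X)$. For the conjugation symmetry I would observe that $\Delta_d$ is a real operator, so complex conjugation sends $\mathcal{H}^{p,q}$ isomorphically onto $\mathcal{H}^{q,p}$, which translates into $\overline{H^{p,q}(X)} = H^{q,p}(X)$. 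Finally the canonical isomorphism $H^{p,q}(X)\cong H^q(X,\Omega^p_X)$ is the Dolbeault theorem, proved independently of the Kähler condition: the Dolbeault--Grothendieck $\bar\partial$-Poincaré lemma shows that $(A^{p,\bullet}(X),\bar\partial)$ is a fine, hence acyclic, resolution of the sheaf $\Omega^p_X$ of holomorphic $p$-forms, and abstract sheaf cohomology then identifies the cohomology of this resolution with the Dolbeault groups.
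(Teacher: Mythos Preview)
Your proof proposal is a correct and well-organized sketch of the standard argument for the Hodge decomposition: elliptic Hodge theory for $\Delta_d$ and $\Delta_{\bar\partial}$, the K\"ahler identities leading to $\Delta_d = 2\Delta_{\bar\partial}$, the bidegree-preserving property of $\Delta_{\bar\partial}$, and the Dolbeault resolution for the identification with sheaf cohomology. This is precisely the route taken in Voisin's book, which the paper cites.

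However, the paper itself does \emph{not} prove this theorem. It is stated with attribution to \cite{VoisinHodge} and used as a black box; no proof or sketch appears in the paper. So there is no ``paper's own proof'' to compare against: the author simply imports the result from the literature, and your proposal supplies exactly the argument that the cited reference contains.
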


\begin{definition}\label{DefHodgeNumbers}
The \textbf{Hodge numbers} of a compact Kähler manifold $X$ are defined as the dimensions of the Dolbeault cohomology groups:
$$h^{p,q}(X) = \dim H^{p,q}(X).$$
\end{definition}

The Hodge numbers $h^{p,q}(X)$ satisfy several symmetries, notably the following:
\begin{itemize}
    \item \textbf{Hodge Symmetry}: $h^{p,q}(X) = h^{q,p}(X)$.
    \item \textbf{Serre Duality}: $h^{p,q}(X) = h^{n-p,n-q}(X)$ for $n = \dim_\mathbb{C} X$.
\end{itemize}

For a compact Kähler manifold \( X \) of complex dimension \( n \), the Hodge diamond (which organizes the Hodge numbers \( h^{p,q}(X) \)) is structured as follows:

\[
\begin{array}{ccccccccccc}
 & & & & & h^{0,0} & & & & & \\
 & & & & h^{1,0} & & h^{0,1} & & & & \\
 & & & h^{2,0} & & h^{1,1} & & h^{0,2} & & & \\
 & & \iddots & & & \vdots & & & \ddots & & \\
 & h^{n-1,0} & & h^{n-2,1} & & \cdots & & h^{1,n-2} & & h^{0,n-1} & \\
 h^{n,0} & & h^{n-1,1} & & h^{n-2,2} & & \cdots & & h^{1,n-1} & & h^{0,n} \\
 & h^{n,n-1} & & h^{n-1,n-2} & & \cdots & & h^{2,1} & & h^{n,0} & \\
 & & \ddots & & & \vdots & & & \iddots & & \\
 & & & h^{n-2,n} & & h^{n-1,n-1} & & h^{n,n-2} & & & \\
 & & & & h^{n,n-1} & & h^{n-1,n} & & & & \\
 & & & & & h^{n,n} & & & & & \\
\end{array}
\]

It is not hard to show that any smooth projective variety over $\mathbb C$ is a compact Kähler manifold, and hence the whole Hodge theory applies. The key takeaway here is that the Hodge numbers can be calculated as the dimension of the cohomology of the bundles of holomorphic differential forms, whereas the latter has the potential to be fitted into exact sequences. That is exactly the approach we take for the calculations of Hodge numbers of the smooth projective surface $\Sigma$.





\subsection{Variation of Hodge structures}\label{SectionVHS}

Let $\pi: \mathcal X\to B$ be a holomorphic family of compact Kähler manifolds. Up to restricting $B$ to a smaller open polydisc, we may assume that $B$ is simply connected. By the Ehresmann's theorem, the fibers of the family are diffeomorphic to each other in a canonical way, thus the Betti cohomology groups of the fibers can be canonically identified. 
\begin{definition}
The \textbf{variation of Hodge structures} of weight \( k \) of the family $\pi: \mathcal X\to B$ consists of:
\begin{enumerate}
    \item A fixed abelian group $H_\mathbb Z$, which is a representative of the $k$-th Betti cohomology group mentioned above;
    \item and for each $s\in B$, the Hodge structure $H^{p, q}(X_s)$ with ambiant space $H^k(X_s, \mathbb Z)\cong H_\mathbb Z$.
\end{enumerate}
\end{definition}

The variation of Hodge structures encodes how the Hodge decomposition \( H_\mathbb{C} = \bigoplus_{p+q=k} H^{p,q} \) changes as the parameter \( s \) varies over \( B \).

To study variations of Hodge structures, it is useful to consider the period mapping. This is a holomorphic map that associates to each point \( s \in B \) the Hodge structure on the fiber \( H_s \). For each $s \in B$, the subspace $H^{p, q}(X_s)$ is a subspace of the fixed vector space $H_\mathbb C = H_\mathbb Z\otimes_\mathbb Z \mathbb C$, which can viewed as an element of the Grassmannian $\mathrm{Gr}(h^{p, q}, H_\mathbb C)$.

\begin{definition}
The \textbf{period mapping} \( \Phi \) is a holomorphic map:
\[
\Phi: B \to \prod_{p+ q = k} \mathrm{Gr}(h^{p, q}, H_\mathbb C),
\]
as described above.
\end{definition}

\begin{definition}[Maximal Variation]
We say that the variation of the Hodge structure of degree \( k \) is \textbf{maximal} if the period mapping:
\[
\Phi: B \to \prod_{p+q=k} \mathrm{Gr}(h^{p,q}, H_\mathbb C)
\]
is an immersion, i.e., the derivative of \( \Phi \) is injective at every point \( s \in B \).
\end{definition}

This condition means that the Hodge structure varies as freely as possible across the family, with no redundant dependencies among the parameters.


\subsection{Hyper-Kähler manifolds}

We follow closely the presentation in~\cite{DebarreHK} for the hyper-Kähler manifold.

\begin{definition}
    A hyper-Kähler manifold $X$ is a simply connected, compact Kähler manifold such that $H^0(X, \Omega_X^2) = \mathbb C\sigma_X$, where $\sigma_X$ is a nowhere degenerate holomorphic $2$-form.
\end{definition}


\begin{eg}[Beauville-Donagi Hyper-Kähler Fourfold~\cite{BD}]
    Let \( Y_4 \subset \mathbb{P}^5 \) be a smooth cubic fourfold, and let \( X = F_1(Y_4) \) denote the Fano variety of lines on \( Y_4 \), defined as:
    \[
    F_1(Y_4) = \{ \ell \subset Y_4 \mid \ell \text{ is a line} \}.
    \]
    In~\cite{BD}, it is shown that $X$ is a hyper-Kähler manifold of dimension $4$.
    The Betti numbers of the hyper-Kähler $4$-fold $X$ are as follows:
Here is the revised formula with \( Y_4 \) changed to \( X \):

\[
b_0(X) = 1, \quad b_1(X) = 0, \quad b_2(X) = 23, \quad b_3(X) = 0, \quad b_4(X) = 276,
\quad b_5(X) = 0, \quad b_6(X) = 23, \quad b_7(X) = 0, \quad b_8(X) = 1
\]

The Hodge numbers of \( Y_4 \) are:
\[
h^{0,0} = h^{4,4} = 1, \quad h^{1,1} = 21, \quad h^{2,2} = 232, \quad h^{3,3} = 21, \quad h^{1,3} = h^{3,1} = 1.
\]
\end{eg}

\subsection{Transcendental Parts and Hodge Structures of Hyper-Kähler Manifolds}\label{SectionTranscendental}

In this section, we study the transcendental part of the cohomology of hyper-Kähler manifolds and its relationship to cubic fourfolds. These concepts are central to understanding the geometric and arithmetic properties of these varieties.

\begin{definition}
Let \( X \) be a hyper-Kähler manifold. The \textbf{transcendental part} of \( H^2(X, \mathbb{Q}) \) is defined to be the minimal Hodge structure containing \( H^{0,2}(X) \). It is denoted as:
\[
H^2(X, \mathbb{Q})_{\mathrm{tr}} \subset H^2(X, \mathbb{Q}),
\]
where \( H^2(X, \mathbb{Q})_{\mathrm{tr}} \) is the transcendental part of \( H^2 \).
\end{definition}

\begin{remark}
For a projective hyper-Kähler manifold \( X \), the transcendental part can be expressed as the orthogonal complement of the Néron-Severi group in \( H^2(X, \mathbb{Q}) \) under the Beauville-Bogomolov quadratic form:
\[
H^2(X, \mathbb{Q})_{\mathrm{tr}} = \left[ H^2(X, \mathbb{Q}) \cap H^{1,1}(X) \right]^\perp,
\]
where \( \mathrm{NS}(X) \) denotes the Néron-Severi group.
\end{remark}

\begin{theorem}[Bolognesi-Pedrini~\cite{Bolognesi}]
Let \( Y \subset \mathbb{P}^5 \) be a cubic fourfold, and let \( X = F_1(Y) \) denote the Fano variety of lines in \( Y \). Then:
\[
H^2(X, \mathbb{Q})_{\mathrm{tr}} \cong H^4(Y, \mathbb{Q})_{\mathrm{tr}}.
\]
\end{theorem}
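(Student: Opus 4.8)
The plan is to relate the two transcendental Hodge structures through the Abel–Jacobi correspondence furnished by the universal line, which is the same correspondence underlying the Beauville–Donagi construction of $X = F_1(Y)$. Concretely, I would introduce the incidence variety $P = \{(\ell, y) \in X \times Y : y \in \ell\}$ with its two projections $p : P \to X$ and $q : P \to Y$. Here $P$ is the projectivization of the restriction to $X$ of the tautological rank-$2$ subbundle on $\mathrm{Gr}(2,6)$, so $p$ is a smooth $\mathbb{P}^1$-bundle, while $q$ sends $(\ell,y)$ to $y$. The correspondence then defines a map
\[
\phi := p_* \circ q^* : H^4(Y, \mathbb{Q}) \to H^2(X, \mathbb{Q}),
\]
where $q^*$ is pullback and $p_*$ is integration along the $\mathbb{P}^1$-fibers (the Gysin map), which lowers cohomological degree by $2$.

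Next I would record the Hodge-theoretic behaviour of $\phi$. Since $q^*$ preserves bidegree and $p_*$ is the Gysin map of a smooth fibration of relative dimension $1$, the composite $\phi$ is a morphism of rational Hodge structures of bidegree $(-1,-1)$; equivalently it is a weight-$4$ morphism $H^4(Y,\mathbb{Q}) \to H^2(X,\mathbb{Q})(-1)$. In particular it carries $H^{3,1}(Y)$ into $H^{2,0}(X)$ and $H^{2,2}(Y)$ into $H^{1,1}(X)$. The essential input, which I would take from Beauville–Donagi~\cite{BD}, is that $\phi$ restricts to an isomorphism of Hodge structures $H^4(Y,\mathbb{Q})_{\mathrm{prim}} \xrightarrow{\ \sim\ } H^2(X,\mathbb{Q})_{\mathrm{prim}}(-1)$; its heart is the nonvanishing of $\phi$ on the one-dimensional space $H^{3,1}(Y)$, whose generator is sent to the holomorphic symplectic form spanning $H^{2,0}(X) = \mathbb{C}\sigma_X$.

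With this in hand I would deduce the transcendental statement by a minimality and irreducibility argument. By definition $H^4(Y,\mathbb{Q})_{\mathrm{tr}}$ is the smallest sub-Hodge-structure whose complexification contains $H^{3,1}(Y)$, and (as in the Remark for $X$) it coincides with the orthogonal complement of the $(2,2)$-Hodge classes under the intersection form, hence lies in $H^4(Y,\mathbb{Q})_{\mathrm{prim}}$ and contains no nonzero Hodge class. I would first observe that $H^4(Y,\mathbb{Q})_{\mathrm{tr}}$ is then an irreducible Hodge structure: any nonzero sub-Hodge-structure $W$ either contains the line $H^{3,1}$, forcing $W = H^4(Y,\mathbb{Q})_{\mathrm{tr}}$ by minimality, or has $W^{3,1} = 0$ and, by complex conjugation on rational sub-structures, also $W^{1,3} = 0$, so $W$ is of pure type $(2,2)$ and consists of Hodge classes, whence $W = 0$. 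Because $\phi$ is a morphism of Hodge structures that is nonzero on $H^{3,1}(Y)$, its kernel meets $H^4(Y,\mathbb{Q})_{\mathrm{tr}}$ in a proper sub-Hodge-structure, which irreducibility forces to vanish; thus $\phi$ is injective on the transcendental part. Its image is a sub-Hodge-structure of $H^2(X,\mathbb{Q})_{\mathrm{tr}}$ containing $H^{2,0}(X)$, so by the same minimality characterization applied to $X$ it equals $H^2(X,\mathbb{Q})_{\mathrm{tr}}$. Hence $\phi$ restricts to the desired isomorphism $H^4(Y, \mathbb{Q})_{\mathrm{tr}} \cong H^2(X, \mathbb{Q})_{\mathrm{tr}}$.

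The main obstacle is the Beauville–Donagi input of the second paragraph: confirming that the correspondence $\phi$ is an isomorphism on primitive cohomology and, above all, that it does not annihilate the generator of $H^{3,1}(Y)$. I would secure this single rank-one nonvanishing either from the explicit identification of $\sigma_X$ with the Abel–Jacobi image of the generator of $H^{3,1}(Y)$, or by degenerating to a Pfaffian cubic fourfold, for which $F_1(Y)$ is a punctual Hilbert scheme $S^{[2]}$ of a K3 surface and the statement can be checked directly, then propagating it over the family using the invariance of the correspondence and the irreducibility of the monodromy action on primitive cohomology. Once this is in place, everything else is the formal Hodge-structure bookkeeping carried out above.
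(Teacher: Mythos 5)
The paper contains no proof of this statement to compare yours against: it is imported verbatim from Bolognesi--Pedrini~\cite{Bolognesi} and used only as input for Corollary~\ref{CorTranscendental}. That said, your proposal is the standard Hodge-theoretic deduction of the result from Beauville--Donagi~\cite{BD}, and it is essentially correct: the universal line gives the correspondence $\phi = p_*q^*$ of bidegree $(-1,-1)$; Beauville--Donagi show $\phi$ is an isomorphism (indeed an isometry up to sign) between primitive parts, carrying $H^{3,1}(Y)$ onto $H^{2,0}(X) = \mathbb{C}\sigma_X$; and irreducibility of the transcendental sub-Hodge structure then forces $\phi$ to restrict to an isomorphism of transcendental parts. (This Hodge-theoretic statement is really a corollary of~\cite{BD}; what~\cite{Bolognesi} adds is the motivic refinement, which you do not need here.) Three points deserve tightening, none of which is a genuine gap. (a) The isomorphism is one of Hodge structures only after a Tate twist, $H^4(Y,\mathbb{Q})_{\mathrm{tr}} \cong H^2(X,\mathbb{Q})_{\mathrm{tr}}(-1)$; the paper's formulation suppresses the twist, but your write-up should state it. (b) Your assertion that $\phi\left(H^4(Y,\mathbb{Q})_{\mathrm{tr}}\right)$ lies inside $H^2(X,\mathbb{Q})_{\mathrm{tr}}$ is claimed rather than proved; it needs one more line --- for instance, intersect the image $I$ (irreducible, being isomorphic to the irreducible $H^4(Y,\mathbb{Q})_{\mathrm{tr}}$) with $H^2(X,\mathbb{Q})_{\mathrm{tr}}$: this intersection is a sub-Hodge structure whose complexification contains $H^{2,0}(X)$, hence equals $I$ by irreducibility of $I$ and contains $H^2(X,\mathbb{Q})_{\mathrm{tr}}$ by minimality, giving both inclusions --- or, alternatively, use that the Beauville--Donagi map is an isometry of primitive parts, hence matches Hodge classes with Hodge classes and orthogonal complements with orthogonal complements. (c) Your interchangeable use of the two descriptions of the transcendental part (minimal sub-Hodge structure containing the $(3,1)$- resp.\ $(2,0)$-piece, versus orthogonal complement of the Hodge classes) requires that the intersection form be nondegenerate on the space of Hodge classes, which follows from the Hodge--Riemann bilinear relations and should be cited. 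With these repairs your argument is a complete and correct proof of the quoted theorem, and it is more self-contained than the paper's treatment, which simply defers to the literature.
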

The theorem below is a classical result whose proof can be found in, for example, ~\cite[IV, Corollary 3.3]{Hartshorne}.
\begin{theorem}
For a very general cubic fourfold \( Y \), the Néron-Severi group \( \mathrm{NS}(Y)_\mathbb{Q} \) is \( \mathbb{Q} \).
\end{theorem}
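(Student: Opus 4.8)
The plan is to prove this as a Noether--Lefschetz statement about the middle cohomology. First I would note that, since $Y\subset\mathbb P^5$ is a smooth hypersurface of dimension $4$, the Lefschetz hyperplane theorem gives $H^2(Y,\mathbb Z)\cong H^2(\mathbb P^5,\mathbb Z)=\mathbb Z\cdot h$, so the divisorial part of the statement is automatic and independent of $Y$; the content of the ``very general'' hypothesis lies entirely in the middle cohomology. Accordingly I read the assertion as the statement that for very general $Y$ the group of Hodge classes in $H^4(Y,\mathbb Q)$ is spanned by $h^2$, equivalently that the transcendental part $H^4(Y,\mathbb Q)_{\mathrm{tr}}$ of Bolognesi--Pedrini is the whole primitive cohomology. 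Writing $H^4(Y,\mathbb Q)_{\mathrm{prim}}$ for the orthogonal complement of $h^2$, whose Hodge numbers are $h^{3,1}=h^{1,3}=1$ and $h^{2,2}_{\mathrm{prim}}=20$, it suffices to prove that for very general $Y$ no nonzero primitive integral class is of Hodge type $(2,2)$.

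The next step is a density argument over the moduli of cubics. Let $U\subset\mathbb P\!\left(H^0(\mathbb P^5,\mathcal O(3))\right)$ be the open locus of smooth cubic fourfolds and $\mathcal Y\to U$ the universal family; its degree-$4$ cohomology is a polarized variation of Hodge structures. Over a simply connected neighborhood of a base point, a primitive integral class $\gamma\in H^4(Y_0,\mathbb Z)$ extends to a flat section, and the associated Noether--Lefschetz locus
\[
\mathrm{NL}_\gamma=\{\,s\in U:\gamma_s\in H^{2,2}(Y_s)\,\}
\]
is a closed analytic subset, cut out by the vanishing of the projection of $\gamma_s$ into the moving subspace $H^{3,1}(Y_s)\oplus H^{1,3}(Y_s)$. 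The collection of all Hodge classes appearing in the family is governed by the countable family $\{\mathrm{NL}_\gamma\}_{\gamma\in H^4(\mathbb Z)}$, so the theorem reduces to showing that $\mathrm{NL}_\gamma$ is a \emph{proper} subset of $U$ whenever $\gamma$ is primitive and nonzero.

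The heart of the matter is this properness, which I would establish through the infinitesimal variation of Hodge structure. By Griffiths' residue theory for hypersurfaces one has $H^1(Y,T_Y)\cong R_f^{3}$ and $H^{3,1}_{\mathrm{prim}}\cong R_f^{0}$, $H^{2,2}_{\mathrm{prim}}\cong R_f^{3}$, where $R_f$ is the Jacobian ring of a defining cubic $f$, and the differential of the period map is identified with the ring multiplication. In particular the contraction
\[
H^1(Y,T_Y)\otimes H^{3,1}_{\mathrm{prim}}\;\longrightarrow\;H^{2,2}_{\mathrm{prim}},\qquad R_f^{3}\otimes R_f^{0}\to R_f^{3},
\]
is surjective, since $R_f^{0}=\mathbb C$. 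Fixing a generator $\omega$ of $H^{3,1}_{\mathrm{prim}}$, the classes $\nabla_\xi\omega$ therefore span $H^{2,2}_{\mathrm{prim}}$ as $\xi$ ranges over deformation directions. If a primitive class $\gamma$ of type $(2,2)$ stayed of Hodge type to first order in every direction, then $\nabla_\xi\gamma=0$ for all $\xi$, and by the adjunction between $\nabla_\xi$ and the cup-product pairing this forces $\gamma$ to be orthogonal to every $\nabla_\xi\omega$, hence to all of $H^{2,2}_{\mathrm{prim}}$; nondegeneracy of the pairing then gives $\gamma=0$. Thus $\mathrm{NL}_\gamma\subsetneq U$ for every primitive $\gamma\neq0$.

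It remains to conclude by Baire category: $\bigcup_\gamma\mathrm{NL}_\gamma$ is a countable union of proper closed analytic subvarieties of the connected complex manifold $U$, so its complement is dense, and this dense complement is exactly the ``very general'' locus. For $Y$ in the complement the only integral Hodge classes in $H^4$ are multiples of $h^2$, which together with the Lefschetz computation in $H^2$ gives $\mathrm{NS}(Y)_\mathbb Q=\mathbb Q$. The step I expect to be the real obstacle is the properness of the $\mathrm{NL}_\gamma$, i.e.\ the surjectivity of the infinitesimal period map onto $H^{2,2}_{\mathrm{prim}}$; all the genuine input sits in the Jacobian-ring description of the IVHS for cubic fourfolds, and one must take care that the Kodaira--Spencer map $T_sU\to H^1(Y,T_Y)$ is surjective so that the full multiplication $R_f^{3}\otimes R_f^{0}\to R_f^{3}$ is realized by honest geometric deformations.
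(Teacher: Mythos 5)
Your proposal is correct, but there is no paper proof to compare it against: the paper states this result without argument, as a ``classical result'' with a pointer to the literature (and the reference it gives, Hartshorne IV, Corollary 3.3, concerns curves and does not cover this statement). So your write-up supplies something the paper omits. Your opening reinterpretation is exactly the right move, and is the genuinely important point: by Grothendieck--Lefschetz, $\mathrm{Pic}(Y)\cong\mathbb Z\cdot\mathcal O_Y(1)$ for \emph{every} smooth cubic fourfold, so the literal assertion about $\mathrm{NS}(Y)_{\mathbb Q}$ is trivial and the ``very general'' hypothesis is vacuous; what the paper actually uses downstream (Corollary~\ref{CorTranscendental}, giving $b_2(X)_{\mathrm{tr}}=b_4(Y)-1=22$ in the proof of Theorem~\ref{ThmVHSisMaximal}) is the Noether--Lefschetz statement that for very general $Y$ the only rational Hodge classes in $H^4(Y,\mathbb Q)$ are multiples of $h^2$. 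Your Noether--Lefschetz argument --- countably many loci $\mathrm{NL}_\gamma$, properness of each via the infinitesimal variation of Hodge structure in its Jacobian-ring description, then Baire category --- is the standard and correct proof of that statement, and the numerology you invoke is right: $H^{3,1}_{\mathrm{prim}}\cong R_f^0=\mathbb C$, $H^{2,2}_{\mathrm{prim}}\cong R_f^3$, $H^1(Y,T_Y)\cong R_f^3$, with surjective Kodaira--Spencer map from the parameter space of cubics.

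One step needs repair in its phrasing, though not in substance. You write that if $\gamma$ stays of Hodge type to first order then ``$\nabla_\xi\gamma=0$ for all $\xi$''; but $\gamma$ is a flat section, so $\nabla_\xi\gamma=0$ holds automatically and carries no information. The correct first-order condition at a point of $\mathrm{NL}_\gamma$ is the vanishing, for all $\xi$, of
\[
\partial_\xi\langle\gamma,\omega_s\rangle \;=\;\langle\nabla_\xi\gamma,\omega\rangle+\langle\gamma,\nabla_\xi\omega\rangle\;=\;\langle\gamma,\nabla_\xi\omega\rangle\;=\;\bigl\langle\gamma,\pi^{2,2}(\nabla_\xi\omega)\bigr\rangle,
\]
where the last equality uses $\gamma^{1,3}=0$ at the given point and Griffiths transversality $\nabla_\xi\omega\in H^{3,1}\oplus H^{2,2}$. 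Since the classes $\pi^{2,2}(\nabla_\xi\omega)$ span $H^{2,2}_{\mathrm{prim}}$ (your surjectivity of $R_f^3\otimes R_f^0\to R_f^3$ composed with Kodaira--Spencer), and the intersection form on $H^{2,2}_{\mathrm{prim}}$ is nondegenerate by the Hodge--Riemann relations, a primitive $(2,2)$-class $\gamma$ satisfying these conditions vanishes. With that correction the chain ``$\mathrm{NL}_\gamma$ proper for $\gamma$ primitive nonzero $\Rightarrow$ very general $Y$ has no primitive Hodge classes in $H^4$'' is complete, and your proof both establishes the stated theorem and justifies the use actually made of it in the paper.
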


\begin{corollary}\label{CorTranscendental}
For a very general cubic fourfold \( Y \):
\begin{enumerate}
    \item \( b_4(Y)_{\mathrm{tr}} = b_4(Y) - 1 \), 
    \item \( b_2(X)_{\mathrm{tr}} = b_4(Y) - 1 \).
\end{enumerate}
\end{corollary}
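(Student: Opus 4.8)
The plan is to establish the two equalities in turn, obtaining (2) as a formal consequence of (1) together with the Bolognesi--Pedrini isomorphism. Throughout I work with the weight-$4$ Hodge structure on $H^4(Y,\mathbb Q)$, whose only Hodge numbers away from the middle row are $h^{3,1}(Y)=h^{1,3}(Y)=1$, and I use that cup product endows $H^4(Y,\mathbb Q)$ with a nondegenerate symmetric pairing compatible with the Hodge decomposition. By definition the transcendental part $H^4(Y,\mathbb Q)_{\mathrm{tr}}$ is the smallest rational sub-Hodge-structure whose complexification contains $H^{3,1}(Y)$; since it is a sub-Hodge-structure of a polarized one, it is an orthogonal direct summand, and its complement is a Hodge structure purely of type $(2,2)$, i.e. spanned by algebraic classes.

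For part (1), I would start from the Lefschetz decomposition $H^4(Y,\mathbb Q)=\mathbb Q\,h^2\oplus H^4(Y,\mathbb Q)_{\mathrm{prim}}$, where $h$ is the hyperplane class; this is an orthogonal decomposition of Hodge structures, and the class $h^2$ is algebraic of type $(2,2)$. The content of the statement is then that for very general $Y$ the primitive part carries no further algebraic class, so that the algebraic part of $H^4(Y,\mathbb Q)$ is exactly the line $\mathbb Q\,h^2$. Granting this, the transcendental part is forced to be all of $H^4(Y,\mathbb Q)_{\mathrm{prim}}$, and a dimension count gives $b_4(Y)_{\mathrm{tr}}=\dim_\mathbb Q H^4(Y,\mathbb Q)_{\mathrm{prim}}=b_4(Y)-1$. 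The one-dimensionality of the algebraic part is precisely what the preceding theorem on very general $Y$ supplies, applied to the middle-degree Hodge classes.

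Part (2) is then immediate: the Bolognesi--Pedrini theorem gives an isomorphism of Hodge structures $H^2(X,\mathbb Q)_{\mathrm{tr}}\cong H^4(Y,\mathbb Q)_{\mathrm{tr}}$, so in particular the two spaces have equal dimension, and combining with part (1) yields $b_2(X)_{\mathrm{tr}}=b_4(Y)_{\mathrm{tr}}=b_4(Y)-1$.

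I expect the only real obstacle to be the step singled out in part (1): verifying that a very general cubic fourfold acquires no algebraic $(2,2)$-class beyond $\mathbb Q\,h^2$. Unlike the situation in $H^2$, where the statement $\mathrm{NS}(Y)_\mathbb Q=\mathbb Q$ is essentially automatic, in middle degree the \emph{very general} hypothesis is genuinely needed: the locus of cubic fourfolds carrying an extra Hodge class is a countable union of proper closed subvarieties of the moduli space (the Noether--Lefschetz, or in Hassett's terminology special, loci), and one must invoke this density statement to conclude that a very general $Y$ lies outside all of them. Equivalently, one shows that the primitive Hodge structure $H^4(Y,\mathbb Q)_{\mathrm{prim}}$ is irreducible for very general $Y$, so that the minimal sub-Hodge-structure containing $H^{3,1}(Y)$ is the whole of it.
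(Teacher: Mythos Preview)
Your proposal is correct and follows the same two-step skeleton implicit in the paper's presentation: deduce (1) from the preceding theorem asserting that the space of rational Hodge classes in middle degree is one-dimensional for very general $Y$, and then read off (2) from the Bolognesi--Pedrini isomorphism.

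Your final paragraph is in fact sharper than the paper on one point. The theorem immediately preceding the corollary is phrased in terms of the N\'eron--Severi group $\mathrm{NS}(Y)_{\mathbb Q}$, which strictly speaking lives in $H^2(Y,\mathbb Q)$ and is automatically $\mathbb Q$ for any smooth cubic fourfold by Lefschetz hyperplane; what the corollary actually requires, and what you correctly isolate, is the middle-degree statement that the rational $(2,2)$-Hodge classes in $H^4(Y,\mathbb Q)$ reduce to $\mathbb Q\,h^2$ for very general $Y$. The paper is evidently using ``N\'eron--Severi'' loosely to mean this lattice of middle-degree Hodge classes (as is common in the cubic-fourfold literature), and your identification of the relevant input as a Noether--Lefschetz/Hassett-type statement is exactly right. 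So there is no gap in your argument; you have simply unpacked the paper's shorthand.
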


\subsection{Lagrangian submanifolds}

There is a special type of submanifolds in the hyper-Kähler manifold, called the Lagrangian submanifold. Let $X$ be a hyper-Kähler manifold of dimension $2n$.

\begin{definition}
    A Lagrangian submanifold $L\subset X$ is a submanifold of dimension $n$ such that $\sigma_X|L = 0$.
\end{definition}


Let $H_5\subset \mathbb P^6$ be a general hyperplane. Let $Y_4\subset H_5$ be a cubic $4$-fold. According to Beauville-Donagi~\cite{BD}, the moduli space of lines in $Y_4$ is a hyper-Kähler $4$-fold. Let $X = F_1(Y_4)$ be the Beauville-Donagi hyper-Kähler $4$-fold. Let $Y$ be general cubic $5$-fold containing $Y_4$ and let $\Sigma = F_2(Y)$ be the moduli space of planes in $Y$. Consider the map
\[
i: \Sigma \to X
\]
that defines as follows. Let $x\in Sigma$ be point corresponding to a plane $P_x\subset Y$. Let $L = P_x\cap H_5$ be the intersection line. Then $L$ is a line inside $Y\cap H_5 = Y_4$, hence corresponds to a point $l\in X$. Then we define $l = i(x)$.

\begin{theorem}[Iliev-Manivel~\cite{IlievManivel}]
    The map $i: \Sigma \to X$ is an embedding and sends $\Sigma$ onto a Lagrangian submanifold of $X$.
\end{theorem}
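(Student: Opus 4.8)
The plan is to verify, in order, that $i$ is well-defined, that it is a closed embedding, and finally that its image is Lagrangian; since $\dim\Sigma = 2 = \tfrac12\dim X$, the last point reduces to the single vanishing $i^*\sigma_X = 0$, which I would obtain by a clean Hodge-theoretic computation with the Beauville--Donagi cylinder map. For well-definedness, the Grassmannian count shows a general cubic fourfold $Y_4$ contains no plane, so no plane $P\subset Y$ lies in $H_5$; hence $P\cap H_5$ is genuinely a line $L$, and $L\subset P\cap(Y\cap H_5) = P\cap Y_4\subset Y_4$ gives a point of $X = F_1(Y_4)$. That $i$ is a morphism follows from functoriality of the incidence construction on the universal families.

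Next I would compute the differential. From $L\subset P\subset Y$ and $L\subset Y_4\subset Y$, together with the fact that $P$ and $Y_4$ meet transversally along $L$ inside $Y$ (the count $2+4-5 = 1 = \dim L$ holds for general $H_5$), one gets $N_{L/Y} = N_{L/P}\oplus N_{L/Y_4}$, so the quotient sequence $0\to N_{L/P}\to N_{L/Y}\to N_{P/Y}|_L\to 0$ identifies $N_{P/Y}|_L\cong N_{L/Y_4}$. Under $T_P\Sigma = H^0(P,N_{P/Y})$ and $T_LX = H^0(L,N_{L/Y_4})$, the map $di$ becomes restriction of sections from $P$ to $L$, whose kernel is $H^0(P,N_{P/Y}(-1))$ since $\mathcal I_{L/P} = \mathcal O_P(-1)$. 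Twisting the defining sequence $0\to N_{P/Y}\to\mathcal O_P(1)^{\oplus 4}\to\mathcal O_P(3)\to 0$ by $\mathcal O_P(-1)$ exhibits this kernel as $\ker\big(\mathbb C^4\to H^0(P,\mathcal O_P(2))\big)$, the map being the four normal partial derivatives of $F$ restricted to $P$. For general $Y$ these four quadrics are linearly independent on every plane $P\subset Y$, so $di$ is injective and $i$ is an immersion. For global injectivity I would argue that two distinct planes with $P\cap H_5 = P'\cap H_5 = L$ force $P\cap P' = L$, so $\Pi = \langle P,P'\rangle$ is a $\mathbb P^3$ with $F|_\Pi$ divisible by both linear forms cutting out $P$ and $P'$; hence $\Pi\cap Y$ is a union of three planes with $L\subset H_5$, and a parameter count rules out such triple-plane configurations for general $H_5$. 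An injective immersion from the projective (hence proper) surface $\Sigma$ is then a closed embedding.

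For the Lagrangian property it suffices to show $i^*\sigma_X = 0$, equivalently $i^*[\sigma_X] = 0$ in $H^2(\Sigma,\mathbb C)$ (a holomorphic two-form vanishes iff its class does). Let $\eta\in H^{3,1}(Y_4)$ be the generator, so $\sigma_X = \Phi(\eta)$ where $\Phi = p_*q^*$ is the cylinder map of the universal line $\mathcal L\subset X\times Y_4$. The induced line family over $\Sigma$ is $\mathcal L_\Sigma = (i\times\mathrm{id})^*\mathcal L$, and it coincides with $\tilde q^{-1}(Y_4)\subset\mathcal P$, where $\pi\colon\mathcal P\to\Sigma$ is the universal plane and $\tilde q\colon\mathcal P\to Y$ the evaluation. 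Proper base change gives $i^*\sigma_X = p_{\Sigma*}q_\Sigma^*\eta$, and writing $p_\Sigma = \pi\circ\iota$ with $\iota\colon\mathcal L_\Sigma\hookrightarrow\mathcal P$, base change for the cartesian square $\mathcal L_\Sigma = \tilde q^{-1}(Y_4)$ yields $\iota_*q_\Sigma^*\eta = \tilde q^*j_*\eta$, where $j\colon Y_4\hookrightarrow Y$. Now $j_*$ has Hodge bidegree $(1,1)$, so $j_*\eta\in H^{4,2}(Y)$; but for a smooth cubic fivefold $H^6(Y) = \mathbb C\,h_Y^3$ is pure of type $(3,3)$, whence $H^{4,2}(Y) = 0$ and $j_*\eta = 0$. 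Therefore $i^*\sigma_X = \pi_*\iota_*q_\Sigma^*\eta = \pi_*\tilde q^*j_*\eta = 0$, and $i(\Sigma)$ is Lagrangian.

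I expect the main obstacle to be the global injectivity rather than the Lagrangian vanishing: the infinitesimal injectivity and the computation $i^*\sigma_X = 0$ are clean once the normal-bundle identification and the cylinder-map formalism are in place, but excluding the degenerate triple-plane configurations — and pinning down precisely which genericity of the triple $(Y_4,H_5,Y)$ is required so that they never arise — is where the argument needs the most care. A secondary technical point to handle carefully is the transversality of $P$ and $Y_4$ along $L$, on which the identification $N_{P/Y}|_L\cong N_{L/Y_4}$ (and hence both the immersion and the Lagrangian computations) rests.
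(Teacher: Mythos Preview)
The paper does not prove this theorem: it is stated with attribution to Iliev--Manivel and cited as input, with no argument given. So there is no in-paper proof to compare your proposal against.

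Your outline is nonetheless a correct and essentially complete reconstruction. A few remarks. The transversality you flag as a ``secondary technical point'' is in fact automatic: once $P\not\subset H_5$ (which you have from the absence of planes in a general $Y_4$), linearity of $P$ and $H_5$ forces $T_pP+T_pH_5=T_p\mathbb P^6$ at every $p\in L$, and intersecting with $T_pY$ gives $T_pP+T_pY_4=T_pY$; so no further genericity is needed there. Your Lagrangian argument via $j_*\eta\in H^{4,2}(Y)=0$ is clean and correct, and is the same mechanism used in the original source, phrased through the cylinder correspondence. The immersion and injectivity steps are set up correctly; what remains, as you say, is to actually run the incidence-variety dimension counts showing (a) that the four partial-derivative quadrics are independent on \emph{every} plane of a general $Y$, and (b) that general $Y$ contains no $\mathbb P^3$ meeting it in a triple of planes with the prescribed incidence to $H_5$. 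These are routine but must be written out to make the proof self-contained.
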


In the next section, let us study more closely the moduli $\Sigma$.

\subsection{Moduli space of planes in a cubic $5$-fold}
Let $Y\subset \mathbb P^6$ be a \emph{general} cubic $5$-fold. The moduli space $\Sigma$ of planes in $Y$ can be viewed as a subvariety of the Grassmannian $\mathrm{Gr}(3, 7)$ in the following way: Let $\mathcal U$ be the tautological subbundle of $\mathrm{Gr}(3, 7)$. The defining polynomial $f$ of $Y$ induces a global section $\sigma_f$ of the vector bundle $\mathrm{Sym}^3\mathcal U^\vee$. A plane $P\subset \mathbb P^6$ is contained in $Y$ if and only if the section $\sigma_f$ vanishes at the point $x_P\in \mathrm{Gr}(3, 7)$ representing the plane $P$. Hence, the moduli space $\Sigma$ is exactly the zero locus of the section $\sigma_f$, and thus it is a projective variety. Therefore, in order to understand the geometry of $\Sigma$, it is essential to study the vector bundle $\mathrm{Sym}^3\mathcal U^\vee$.

\begin{proposition}\label{PropSym3UdualGloballyGenerated}
     The vector bundle $\mathrm{Sym}^3\mathcal U^\vee$ is {globally generated} of rank $10$ and every section has a zero.
\end{proposition}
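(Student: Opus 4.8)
The plan is to treat the three assertions in turn. The rank is immediate: since $\mathcal U$ is the rank-$3$ tautological subbundle, $\mathcal U^\vee$ has rank $3$, and the symmetric cube of a rank-$3$ bundle has rank $\binom{3+3-1}{3} = \binom{5}{3} = 10$. For global generation, I would start from the tautological exact sequence $0 \to \mathcal U \to V \otimes \mathcal O \to \mathcal Q \to 0$ on $\mathrm{Gr}(3, V)$, where $V$ is the underlying $7$-dimensional vector space and $\mathbb P^6 = \mathbb P(V)$. Dualizing gives a surjection $V^\vee \otimes \mathcal O \twoheadrightarrow \mathcal U^\vee$, so $\mathcal U^\vee$ is globally generated. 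Applying the right-exact functor $\mathrm{Sym}^3$ then yields a surjection $\mathrm{Sym}^3 V^\vee \otimes \mathcal O \twoheadrightarrow \mathrm{Sym}^3 \mathcal U^\vee$, exhibiting $\mathrm{Sym}^3\mathcal U^\vee$ as a quotient of a trivial bundle; hence it is globally generated. This also identifies its space of global sections with $\mathrm{Sym}^3 V^\vee$, the space of cubic forms on $\mathbb P^6$, compatibly with the definition of $\sigma_f$.

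For the last assertion, the key observation is that the existence of a zero is governed by the top Chern (Euler) class. Concretely, suppose a global section $s$ of $\mathrm{Sym}^3\mathcal U^\vee$ were nowhere vanishing. Then $s$ would define a trivial line subbundle $\mathcal O \hookrightarrow \mathrm{Sym}^3\mathcal U^\vee$ with locally free quotient $\mathcal F$ of rank $9$, and from $c(\mathrm{Sym}^3\mathcal U^\vee) = c(\mathcal O)\, c(\mathcal F) = c(\mathcal F)$ one would obtain $c_{10}(\mathrm{Sym}^3\mathcal U^\vee) = c_{10}(\mathcal F) = 0$, since $\mathcal F$ has rank $9 < 10$. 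Therefore it suffices to prove $c_{10}(\mathrm{Sym}^3\mathcal U^\vee) \neq 0$ in $H^{20}(\mathrm{Gr}(3, V), \mathbb Z)$: this forces every section to have a nonempty zero locus. Geometrically, this says precisely that every cubic $5$-fold in $\mathbb P^6$ contains at least one plane.

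It remains to establish $c_{10}(\mathrm{Sym}^3\mathcal U^\vee) \neq 0$, and I expect this to be the main obstacle. Because $\mathrm{Sym}^3\mathcal U^\vee$ is globally generated, the zero locus of a general section is an effective cycle of pure codimension $10$ representing $c_{10}$, so $c_{10}$ is an effective class of dimension $2$; moreover, being globally generated the bundle is nef, so by Fulton--Lazarsfeld positivity every intersection number of a Schur polynomial in its Chern classes against an effective cycle is nonnegative, and hence no cancellation can occur. To see that the class is genuinely nonzero it then suffices to exhibit a single strictly positive pairing: I would compute $c_{10}(\mathrm{Sym}^3\mathcal U^\vee)$ in the Schubert basis of $H^{20}(\mathrm{Gr}(3,V))$ via the Schubert calculus recalled in Section~\ref{SubsectionSchubert}, or equivalently pair it with a Schubert class of complementary codimension $2$ and check that the resulting integer is positive. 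This is consistent with the later computation of $e(\Sigma) = c_2(\Sigma)$, in which $c_{10}(\mathrm{Sym}^3\mathcal U^\vee)$ appears as a nonzero factor.

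The only genuinely laborious point is the splitting-principle bookkeeping required to expand $c_{10}$ of a symmetric cube in terms of the Chern roots of $\mathcal U^\vee$ and to evaluate the resulting Schubert products, but this is entirely mechanical once the structure constants of $\mathrm{Gr}(3,7)$ are in hand, and it is carried out in the Chern-class computations of the later sections.
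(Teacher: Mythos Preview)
Your proof is correct. The rank computation and the global-generation argument coincide with the paper's: the paper phrases global generation as ``restriction of cubic forms from $\mathbb C^7$ to $W_x$ is surjective,'' which is exactly the fiberwise content of your surjection $\mathrm{Sym}^3 V^\vee \otimes \mathcal O \twoheadrightarrow \mathrm{Sym}^3 \mathcal U^\vee$. (Your side remark that this identifies $H^0$ with $\mathrm{Sym}^3 V^\vee$ is true but requires a word more---injectivity is clear, surjectivity is Borel--Weil--Bott; it is not needed for the proposition.)

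For the assertion that every section has a zero, you take a genuinely different route from the paper. The paper simply cites \cite[Proposition~2.1]{Borcea}, which shows directly that the projection $q\colon I \to H^0(\mathrm{Gr}(3,7),\mathrm{Sym}^3\mathcal U^\vee)$ from the incidence variety is surjective. You instead use the Euler-class criterion: a nowhere-vanishing section would force $c_{10}(\mathrm{Sym}^3\mathcal U^\vee)=0$, so it suffices to check $c_{10}\neq 0$, which you observe follows from the Schubert computations carried out later (indeed, from Proposition~\ref{PropChernClassOfSym} together with $\sigma_1^2\cdot c_{10}(\mathrm{Sym}^3\mathcal U^\vee)=c_1(\Sigma)^2/9=2835\neq 0$). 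Your argument is self-contained modulo those later computations and nicely explains the enumerative content (every cubic $5$-fold contains a plane); the paper's citation avoids the forward reference and keeps the logical flow linear. Either approach is perfectly sound, but if you adopt yours you should make the forward reference explicit so that the reader sees no circularity---the computation of $c_{10}(\mathrm{Sym}^3\mathcal U^\vee)$ in Proposition~\ref{PropChernClassOfSym} is purely a statement about the Grassmannian and does not depend on $\Sigma$ being smooth.
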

Recall that a vector bundle $E$ on a variety $X$ is called globally generated, if for any $x\in X$, the evaluation map 
\[
H^0(X, E) \to E|_x
\]
is surjective. Now we proceed to the proof.
\begin{proof} As the rank of $\mathcal U^\vee$ is $3$, the rank of $\mathrm{Sym}^3\mathcal U^\vee$ is $\binom{3+3-1}{3}=10$.
Now consider a point \( x \in \mathrm{Gr}(3, 7) \), corresponding to a 3-dimensional subspace \( W_x \subset \mathbb{C}^7 \). The fiber \( \mathrm{Sym}^3 \mathcal{U}^\vee |_x \) is the space of homogeneous polynomials of degree 3 on \( W_x \), i.e.,
\[
\mathrm{Sym}^3 \mathcal{U}^\vee |_x \cong \{ \text{homogeneous polynomials of degree 3 on } W_x \}.
\]
On the other hand, the global sections of \( \mathrm{Sym}^3 \mathcal{U}^\vee \), denoted by \( H^0(\mathrm{Gr}(3,7), \mathrm{Sym}^3 \mathcal{U}^\vee) \), are isomorphic to the space of homogeneous polynomials of degree 3 on \( \mathbb{C}^7 \):
\[
H^0(\mathrm{Gr}(3,7), \mathrm{Sym}^3 \mathcal{U}^\vee) \cong \{ \text{homogeneous polynomials of degree 3 on } \mathbb{C}^7 \}.
\]
The evaluation map
\[
H^0(\mathrm{Gr}(3,7), \mathrm{Sym}^3 \mathcal{U}^\vee) \to \mathrm{Sym}^3 \mathcal{U}^\vee |_x
\]
is simply the restriction of these global polynomials to the subspace \( W_x \). Since the restriction map is surjective, we conclude that $\mathrm{Sym}^3\mathcal U^\vee$ is  globally generated.

Let $I = \{ (x, \sigma)\in \mathrm{Gr}(3, 7)\times H^0(\mathrm{Gr}(3, 7), \mathrm{Sym}^3\mathcal U^\vee): \sigma(x) = 0\}$ be the incidental variety and let $q: I\to H^0(\mathrm{Gr}(3, 7), \mathrm{Sym}^3\mathcal U^\vee)$ be the second projection map. It is shown in 
\cite[Proposition 2.1]{Borcea} that the map $q$ is surjective. The latter means exactly that any global section of $\mathrm{Sym}^3\mathcal U^\vee$ has a zero.

This concludes that \( \mathrm{Sym}^3 \mathcal{U}^\vee \) is globally generated and that every section has a zero.

\end{proof}

To proceed, we will use the following classical result in algebraic geometry. We write down a proof for completeness.
\begin{proposition}\label{PropGloballyGeneratedVBHasGoodProp}
    Let $X$ be a projective variety of dimension $n$. Let $E$ be a globally generated vector bundle of rank $r\geq n$ on $X$ such that any global section on it has a zero. Let $s\in H^0(X,E)$ be a general global section of $E$. Then the zero locus of $s$ is a \emph{smooth} subvariety of dimension $n - r$.
\end{proposition}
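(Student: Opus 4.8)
The plan is to run the classical incidence-variety argument, which is the Bertini-type transversality statement adapted to globally generated bundles. Write $V = H^0(X,E)$ and form the incidence variety
$$I = \{(x,s) \in X \times V : s(x) = 0\},$$
with its two projections $p : I \to X$ and $q : I \to V$. The point is that the first projection will exhibit $I$ as smooth of a computable dimension, and generic smoothness applied to the second projection will transfer that smoothness to the general fiber $q^{-1}(s)$, which is exactly the zero locus $Z(s)$ of the section $s$.

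First I would analyze $p$. For fixed $x \in X$ the fiber $p^{-1}(x)$ is the kernel of the evaluation map $\mathrm{ev}_x : V \to E|_x$. The hypothesis that $E$ is globally generated says precisely that $\mathrm{ev}_x$ is surjective for every $x$, so its kernel has the constant dimension $\dim V - r$. Hence $p$ realizes $I$ as the total space of a vector subbundle of the trivial bundle $X \times V$; in particular (taking $X$ smooth, as it is in our application to the Grassmannian) $I$ is smooth of dimension
$$\dim I = n + \dim V - r.$$
The essential input here is that the fiber dimension of $p$ is constant, which is exactly what global generation provides.

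Next I would turn to $q$. The second hypothesis, that every global section of $E$ has a zero, says exactly that $q$ is surjective, hence dominant. Working over $\mathbb{C}$, generic smoothness for a morphism of smooth varieties in characteristic zero then yields a dense open $U \subseteq V$ over which $q$ is smooth; for $s \in U$ the scheme-theoretic fiber $q^{-1}(s)$, which is canonically identified with $Z(s)$, is therefore smooth, and in particular reduced. Its dimension is computed by subtracting, $\dim Z(s) = \dim I - \dim V = n - r$, using the count above.

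The only step demanding care is the passage from smoothness of the total space $I$ to smoothness of the general fiber, and this is precisely generic smoothness; it is also the step that forces the characteristic-zero assumption. Surjectivity of $q$, furnished by the hypothesis that every section vanishes somewhere, is what prevents the generic fiber from being vacuously smooth and guarantees that the expected dimension $n - r$ is actually attained. The remaining points — constancy of the fibers of $p$ and the dimension bookkeeping — are formal consequences of global generation.
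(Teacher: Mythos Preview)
Your proof is correct and follows essentially the same incidence-variety argument as the paper: the paper works with the projectivized space $\mathbb{P}H^0(X,E)$ rather than the affine $V$, but the structure (constant-dimension fibers of $p$ from global generation, surjectivity of $q$ from the every-section-has-a-zero hypothesis, then generic smoothness/algebraic Sard applied to $q$) is identical. You are in fact slightly more careful than the paper in flagging that smoothness of $X$ is needed for generic smoothness to apply.
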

\begin{proof}
Let 
\[ I = \{(x, s) \in X \times \mathbb{P}H^0(X, E) : s(x) = 0\}. \]

Let \( p: I \to X \) and \( q: I \to \mathbb{P}H^0(X, E) \) be the two projections. We claim that the fibers of \( p \) are projective linear subspaces of \(\mathbb{P}H^0(X, E)\) of codimension \(r\). In fact, since \( E \) is globally generated, for any \( x \in X \), the evaluation map
\[ ev_x: H^0(X, E) \to E_x \simeq \mathbb{C}^r \]
is surjective. This implies that 
\[
\dim I = n + \dim \mathbb PH^0(X, E) - r.
\]
by a dimension theorem~\cite[Ex.II.3.22(c)]{Hartshorne}.
On the other hand, the fact that every global section has a zero implies that the projection map $q$ is surjective. Hence, by the algebraic Sard’s theorem~\cite[Corollary III.10.7]{Hartshorne}, the general fiber of $q$ is smooth of dimension $\dim I - \dim \mathbb PH^0(X, E) = n - r$, as desired.
\end{proof}
Since $\mathrm{Gr}(3, 7)$ is a smooth projective variety of dimension $12$, and $\mathrm{Sym}^3 \mathcal{U}^\vee$ is a globally generated vector bundle of rank $10$ such that every global section has a zero, by Proposition~\ref{PropSym3UdualGloballyGenerated} and Proposition~\ref{PropGloballyGeneratedVBHasGoodProp}, we have the following result
\begin{corollary}
 The moduli space $\Sigma$ of planes in a general cubic $5$-fold $Y$ is a smooth projective surface.
\end{corollary}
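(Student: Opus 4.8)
The plan is to apply Proposition~\ref{PropGloballyGeneratedVBHasGoodProp} directly, with the Grassmannian in the role of the ambient variety. First I would record the numerical data: $X = \mathrm{Gr}(3,7)$ is a smooth projective variety of dimension $n = 12$, and $E = \mathrm{Sym}^3\mathcal{U}^\vee$ is, by Proposition~\ref{PropSym3UdualGloballyGenerated}, a globally generated vector bundle of rank $r = 10$ every section of which has a zero. Here the rank satisfies $r = 10 \le 12 = n$, so the proposition applies and tells us that the zero locus of a general section $s \in H^0(\mathrm{Gr}(3,7), \mathrm{Sym}^3\mathcal{U}^\vee)$ is a smooth subvariety of the expected dimension $n - r = 12 - 10 = 2$. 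This zero locus is closed in $\mathrm{Gr}(3,7)$, being the vanishing locus of a section, hence projective, and it is nonempty precisely because every section has a zero.

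The one genuinely content-bearing step is to check that the section $\sigma_f$ attached to a \emph{general} cubic $5$-fold is itself a general section of the bundle, so that the smoothness conclusion of Proposition~\ref{PropGloballyGeneratedVBHasGoodProp} transfers to $\Sigma = F_2(Y)$. This follows from the identification already used in the proof of Proposition~\ref{PropSym3UdualGloballyGenerated}, namely the canonical isomorphism between $H^0(\mathrm{Gr}(3,7), \mathrm{Sym}^3\mathcal{U}^\vee)$ and $\mathrm{Sym}^3(\mathbb{C}^7)^\vee$, the space of homogeneous cubic polynomials on $\mathbb{C}^7$. Under this isomorphism the section $\sigma_f$ is precisely the defining cubic $f$, and the parameter space of cubic $5$-folds in $\mathbb{P}^6$ is literally the same projective space $\mathbb{P}(\mathrm{Sym}^3(\mathbb{C}^7)^\vee)$ that governs genericity of sections. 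Hence the phrases ``general cubic $5$-fold'' and ``general section'' cut out the same dense open condition, and for $Y$ general the zero locus $\Sigma$ is smooth of dimension $2$, i.e.\ a smooth projective surface.

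I do not expect a serious obstacle, since both inputs are already in hand and the substance lives entirely in the two preceding propositions. The only points requiring a little care are bookkeeping ones: the dimension formula $n - r = 2$ forces the relevant hypothesis of Proposition~\ref{PropGloballyGeneratedVBHasGoodProp} to be read as $r \le n$, so that a general section cuts out a subvariety of positive expected dimension rather than the empty set; and one should confirm that the generic smoothness argument (the algebraic Sard's theorem) underlying that proposition is being invoked over $\mathbb{C}$, where generic smoothness is indeed available. With these confirmed, the corollary follows immediately from Proposition~\ref{PropSym3UdualGloballyGenerated} and Proposition~\ref{PropGloballyGeneratedVBHasGoodProp}.
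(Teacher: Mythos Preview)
Your proposal is correct and matches the paper's approach exactly: the paper simply states that since $\mathrm{Gr}(3,7)$ has dimension $12$ and $\mathrm{Sym}^3\mathcal{U}^\vee$ is globally generated of rank $10$ with every section admitting a zero, Propositions~\ref{PropSym3UdualGloballyGenerated} and~\ref{PropGloballyGeneratedVBHasGoodProp} give the result. Your write-up is in fact more careful than the paper's own one-line deduction, in that you spell out why ``general cubic $5$-fold'' and ``general section'' are the same condition and you flag that the hypothesis in Proposition~\ref{PropGloballyGeneratedVBHasGoodProp} must be read as $r\le n$ (the stated ``$r\ge n$'' is a typo, as the conclusion $\dim = n-r$ and the application with $r=10<12=n$ make clear).
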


Finally, we want to study ampleness of the canonical bundle of the surface $\Sigma$. Recall the following definition of the ampleness in general. A line bundle $L$ on a projective space $X$ induces a canonical map
\[
\begin{array}{cccc}
  \phi_L: & X & \dashrightarrow & \mathbb PH^0(X, L)^*\\
   & x & \mapsto & (\sigma\mapsto \sigma(x))
\end{array}.
\] The line bundle $L$ is called very ample, if the induced map $\phi_L$ is a closed embedding, and $L$ is called ample, if some tensor power of $L$ is very ample. For the surface $\Sigma$ in the article, we have
\begin{proposition} 
    The canonical bundle of the surface $\Sigma$ is ample.
\end{proposition}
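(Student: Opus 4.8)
The plan is to identify the canonical bundle $K_\Sigma$ explicitly as the restriction to $\Sigma$ of a positive power of the (very ample) Plücker line bundle on $\mathrm{Gr}(3,7)$, and then to invoke the elementary fact that the restriction of an ample line bundle to a closed subvariety is ample. The starting point is the formula
\[
K_\Sigma = \left(K_{\mathrm{Gr}(3,7)} \otimes \det \mathrm{Sym}^3\mathcal U^\vee\right)\big|_\Sigma,
\]
which comes from taking top exterior powers in the short exact sequence (\ref{EqExactSequenceForTangent}), i.e.\ the adjunction formula for the zero locus of a regular section. Since $\mathrm{Pic}(\mathrm{Gr}(3,7)) \cong \mathbb Z$ is generated by the Plücker bundle $\mathcal O(1) = \det\mathcal U^\vee$, the whole problem reduces to computing the two line bundles on the right-hand side as powers of $\mathcal O(1)$.

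First I would compute the canonical bundle of the ambient Grassmannian. Using the standard isomorphism $T_{\mathrm{Gr}(3,7)} \cong \mathcal U^\vee \otimes Q$, where $Q$ is the rank-$4$ quotient bundle, together with the determinant identity $\det(A\otimes B) = (\det A)^{\mathrm{rk}\, B}\otimes(\det B)^{\mathrm{rk}\, A}$ and the relation $\det Q = \det \mathcal U^\vee = \mathcal O(1)$ (from $\det\mathcal U \otimes \det Q = \mathcal O$), I obtain $\det T_{\mathrm{Gr}(3,7)} = \mathcal O(1)^{\otimes 4}\otimes\mathcal O(1)^{\otimes 3} = \mathcal O(7)$, hence $K_{\mathrm{Gr}(3,7)} = \mathcal O(-7)$. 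Next I would compute $\det\mathrm{Sym}^3\mathcal U^\vee$ by Chern roots: if $a_1,a_2,a_3$ are the Chern roots of $\mathcal U^\vee$, then the Chern roots of $\mathrm{Sym}^3\mathcal U^\vee$ are the classes $ia_1 + ja_2 + ka_3$ with $i+j+k = 3$, and summing them over all such triples yields $c_1(\mathrm{Sym}^3\mathcal U^\vee) = 10\,c_1(\mathcal U^\vee)$, since each variable occurs with total weight $\sum_{i+j+k=3} i = 10$. Therefore $\det\mathrm{Sym}^3\mathcal U^\vee = \mathcal O(10)$.

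Combining the two computations gives
\[
K_\Sigma = \left(\mathcal O(-7)\otimes\mathcal O(10)\right)\big|_\Sigma = \mathcal O(3)\big|_\Sigma,
\]
the restriction of the third tensor power of the Plücker bundle. As $\mathcal O(1)$ is very ample on $\mathrm{Gr}(3,7)$, so is $\mathcal O(3)$, and the restriction of an ample line bundle to the closed subvariety $\Sigma$ is again ample; this establishes the proposition (and incidentally confirms that $\Sigma$ is of general type). I do not expect a genuine obstacle: the only points demanding care are the bookkeeping in the two determinant calculations — in particular the weight count $\sum_{i+j+k=3} i = 10$ for the symmetric power and the exponent $7 = \mathrm{rk}\,Q + \mathrm{rk}\,\mathcal U$ for the canonical bundle — while the exactness of (\ref{EqExactSequenceForTangent}), hence the validity of the adjunction formula, is already secured by the smoothness of $\Sigma$ proved above.
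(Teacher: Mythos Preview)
Your proof is correct and follows essentially the same route as the paper: adjunction gives $K_\Sigma = (K_{\mathrm{Gr}(3,7)}\otimes\det\mathrm{Sym}^3\mathcal U^\vee)|_\Sigma$, the two factors are identified as $\mathcal O(-7)$ and $\mathcal O(10)$ respectively (the paper quotes the first and does the second by virtual roots, while you spell out both computations), and the conclusion follows since $\mathcal O(3)|_\Sigma$ is the restriction of an ample bundle.
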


\begin{proof} As $\Sigma$ is a smooth subvariety of $\mathrm{Gr}(3, 7)$, by the adjunction formula, we have
    \[
    K_{\Sigma} = \left. \left(K_{\mathrm{Gr}} \otimes \det(\mathrm{Sym}^3 \mathcal{U}^\vee)\right) \right|_{\Sigma},
    \]
    where $K_{\mathrm{Gr}}$ is the canonical bundle of $\mathrm{Gr}(3, 7)$, and $\det(\mathrm{Sym}^3 \mathcal{U}^\vee)$ is the determinant line bundle associated with the vector bundle $\mathrm{Sym}^3 \mathcal{U}^\vee$.
The canonical bundle of the Grassmannian \( \text{Gr}(3,7) \) is given by:
\[
K_{\text{Gr}} = \varphi^* \mathcal{O}_{\mathbb{P}(\wedge^3 \mathbb{C}^7)}(-7)
\]
where $\phi: \mathrm{Gr}(3, 7)\to \mathbb P(\bigwedge^3\mathbb C^7)$ is the Plücker embedding. Since $\det\mathcal U^\vee = \phi^*\mathcal O_\mathbb P(1)$, a formal virtual root calculation shows that $\det\mathrm{Sym}^3\mathcal U^\vee = \mathcal O_\mathbb P(10)$. Hence, $K_\Sigma = (\phi^*\mathcal O_\mathbb P(3))|_\Sigma$, which is the restriction of an ample line bundle, and is hence ample.
\end{proof}


\subsection{Lagrangian families}
Following the definition in~\cite{Bai}, a Lagrangian family in a hyper-Kähler manifold $X$ is roughly a family of Lagrangian submanifolds in $X$.

\begin{eg}
    Let $H_5\subset \mathbb P^6$ be a hyperplane and $Y_4\subset H_5$ be general cubic $4$-fold. Let $B$ be the moduli space of all cubic $5$-folds $Y\subset \mathbb P^6$ such that $Y\cap H_5 = Y_4$. By Iliev-Manivel~\cite{IlievManivel}, for a general element in $B$ which corresponds to a cubic $5$-fold $Y$, the corresponding moduli space of planes in $Y$ is a Lagrangian submanifold in $X$. Therefore, we get a family of Lagrangian submanifolds in $X$ parametrized by an open subset of $B$. 
\end{eg}

From now on, let us denote $B^0\subset B$ as the moduli space of cubic $5$-folds containing $Y_4$ whose moduli of planes is a Lagrangian submanifold of $X$. Let $\mathcal S\to B^0$ be the corresponding Lagrangian family. In order to prove Theorem~\ref{ThmVHSisMaximal}, we need to utilize the following result in~\cite[Proposition 2]{Bai}

\begin{theorem}[Bai~\cite{Bai}]
    Let $p: \mathcal L\to B$ be a family of Lagrangian submanifolds in a hyper-Kähler manifold $X$ of dimension $2n$. Suppose $X$ is very general, $b_2(X)_{tr} \geq 5$ and $h^{1, 0}(L) < 2^{ \lfloor \frac{b_2(X)_{tr} - 3}{2}\rfloor} $ for a fiber $L$ in the family $\mathcal L$. Then the variation of degree $1$ Hodge structures of the family $\mathcal L$ is maximal.
\end{theorem}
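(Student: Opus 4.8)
The plan is to prove that the weight-$1$ period map $\Phi$ of the family $p\colon \mathcal L\to B$ is an immersion by analysing its differential and deriving a contradiction from the dimension hypothesis. First I would reduce the statement to the injectivity of the differential $d\Phi_b\colon T_bB\to \mathrm{Hom}(H^{1,0}(L),H^{0,1}(L))$, which by Griffiths' description of variations of Hodge structures is cup product against the Kodaira--Spencer class $\kappa(v)\in H^1(L,T_L)$. The first structural input is the deformation theory of Lagrangian submanifolds (Voisin): deformations of $L$ inside $X$ are unobstructed, and the holomorphic symplectic form $\sigma_X$ induces an isomorphism of the normal bundle $N_{L/X}\cong \Omega^1_L$, so that $T_bB\hookrightarrow H^0(L,N_{L/X})\cong H^0(L,\Omega^1_L)=H^{1,0}(L)$. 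Writing $\alpha_v\in H^{1,0}(L)$ for the image of $v$, the differential takes the form of a \emph{symmetric} multiplication $\mu\colon \mathrm{Sym}^2 H^{1,0}(L)\to H^{0,1}(L)$ contracted in one slot, namely $d\Phi_b(v)=\mu(\alpha_v,-)$. Thus maximality is equivalent to the assertion that no nonzero class in the image of $T_bB$ is annihilated by $\mu$.

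The heart of the matter is to identify what governs $\mu$. Since every fiber $L_b$ is Lagrangian in the \emph{same}, very general $X$, I would relate the weight-$1$ variation $R^1p_*\mathbb Q$ to the fixed K3-type Hodge structure $T:=H^2(X,\mathbb Q)_{\mathrm{tr}}$, whose Hodge numbers are $(1,\,b_2(X)_{tr}-2,\,1)$. The claim to establish is that the degree-$1$ Hodge structures occurring in the family lie in the Tannakian category $\langle T\rangle$ generated by $T$, with the symmetric map $\mu$ realized as Clifford (contraction) multiplication by classes of $T$. The genericity hypothesis then enters decisively: for a very general $X$ the Mumford--Tate group of $T$ is the full special orthogonal group $\mathrm{GSpin}(T)$ (a Zarhin-type theorem for K3-type Hodge structures, where $b_2(X)_{tr}\ge 5$ guarantees that $T$ carries no extra endomorphisms and that the orthogonal group is sufficiently large). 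Because $H^1(L)$ has \emph{odd} weight, any nonzero sub-Hodge-structure of it lying in $\langle T\rangle$ is a \emph{spinorial} representation of $\mathrm{GSpin}(T)$, and the smallest nonzero such representation is the (half-)spin representation, of dimension $2^{\lfloor (b_2(X)_{tr}-1)/2\rfloor}$.

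With these ingredients the contradiction is as follows. Suppose $\Phi$ is not an immersion, so $d\Phi_b$ fails to be injective on a dense open subset of $B$; then the fibers of $\Phi$ are positive-dimensional and the weight-$1$ Hodge structure is isotrivial along them. Applying Deligne's theorem of the fixed part to such a fiber produces a nonzero \emph{rational} weight-$1$ sub-Hodge-structure $W\subseteq H^1(L,\mathbb Q)$ lying in $\langle T\rangle$. By the previous paragraph $W$ is spinorial, hence $b_1(L)\ge \dim W\ge 2^{\lfloor (b_2(X)_{tr}-1)/2\rfloor}$, and since $b_1(L)=2\,h^{1,0}(L)$ this gives $h^{1,0}(L)\ge 2^{\lfloor (b_2(X)_{tr}-3)/2\rfloor}$, contradicting the hypothesis $h^{1,0}(L)<2^{\lfloor (b_2(X)_{tr}-3)/2\rfloor}$. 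Therefore $d\Phi_b$ is injective at the general point and the variation of degree-$1$ Hodge structures is maximal.

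I expect the main obstacle to be the structural input of the second paragraph: proving that $H^1(L)$ is genuinely controlled by $T$ and that $\mu$ is Clifford multiplication, since this is what ties the intrinsic geometry of the Lagrangian fibers to the ambient transcendental Hodge structure and makes the spin representation theory applicable. Subsidiary technical points that must be handled carefully are the precise identification of the minimal spinorial dimension $2^{\lfloor (b_2(X)_{tr}-1)/2\rfloor}$, the passage from a non-injective differential to a rational fixed sub-Hodge-structure in $\langle T\rangle$, and the verification that $T_bB\hookrightarrow H^{1,0}(L)$ so that the family is effectively parametrized.
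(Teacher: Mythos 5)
The first thing to say is that the paper contains no proof of this statement: it is quoted as a black box from Bai~\cite{Bai} (Proposition 2 there), and the surrounding text only explains why the smoothness of the fibers makes Bai's auxiliary condition automatic. So there is no ``paper proof'' to compare yours against; I can only judge your argument on its own terms. On that basis, you have correctly reverse-engineered the numerology and the general flavour: the hypothesis $h^{1,0}(L) < 2^{\lfloor (b_2(X)_{tr}-3)/2\rfloor}$ is exactly the statement that $b_1(L) = 2h^{1,0}(L)$ is smaller than $2^{\lfloor (b_2(X)_{tr}-1)/2\rfloor}$, the minimal dimension of a spinorial representation of the relevant spin group, and Zarhin-type results on the Hodge group of $T = H^2(X,\mathbb Q)_{tr}$ for very general $X$, together with a fixed-part argument, are indeed the right toolbox. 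Your first paragraph (unobstructedness of Lagrangian deformations, $N_{L/X}\cong \Omega^1_L$, symmetry of the differential of the period map) is also standard and correct.

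However, the core of your argument has a genuine gap, in two layers. First, the claim that the weight-$1$ Hodge structures ``lie in the Tannakian category $\langle T\rangle$'' cannot be right as stated: every object of $\langle T\rangle$ is a subquotient of sums of $T^{\otimes a}\otimes (T^\vee)^{\otimes b}$ and hence has \emph{even} weight, so no weight-$1$ Hodge structure lies in $\langle T\rangle$ at all; moreover, objects of $\langle T\rangle$ are built from the \emph{fixed} Hodge structure $T$, so if all fibers' $H^1$ lay in such a category the weight-$1$ variation would be isotrivial --- the opposite of what you want to prove. What is actually needed is the Kuga--Satake \emph{enlargement}: weight-$1$ structures $W$ whose Hodge group surjects onto $\mathrm{SO}(T)$ with $T$ embedded in $\wedge^2 W$ or $\mathrm{End}(W)$; and even there, the dimension bound by the spin representation requires using the polarization (the symplectic constraint on weight-$1$ Hodge structures), since bare $\mathfrak{so}$-representation theory admits smaller solutions such as $\mathrm{std}\oplus\mathrm{adjoint}$. (Relatedly, $\mathrm{GSpin}(T)$ is not the Mumford--Tate group of $T$ itself but of its Kuga--Satake weight-$1$ structure.) Second, and more seriously, the geometric step that produces any such linkage from the failure of maximality is missing. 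Deligne's fixed-part theorem, applied along a positive-dimensional fiber of the period map, only tells you that $H^1(L_b)$ is the restriction of a Hodge structure on a smooth compactification of the total space of the subfamily; nothing connects that $H^1$ to $T$. Note that $H^1(X)=0$, and worse, the restriction map $T\to H^2(L_b)$ is identically zero (since $T$ is irreducible and $\sigma_X|_{L_b}=0$), so no pullback or restriction can create the link --- it has to be manufactured, e.g.\ through a correspondence between $X$ and the constant Albanese of the subfamily, exploiting that the Lagrangians over a period fiber sweep out a positive-dimensional locus of $X$. This is precisely the hard content of Bai's proposition; you flag it as ``the main obstacle'' but do not supply it, so the proposal is a plausible strategy sketch rather than a proof.
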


The result we are using here is a special case of the Proposition in~\cite{Bai}. Notice that if the fiber of the Lagrangian family is smooth, then the condition $\clubsuit$ is automatically satisfied, as is explained in the paragraph under~\cite[Proposition 1]{Bai}.


\section{Relations of Hodge numbers and Chern classes}

\subsection{Chern classes}\label{SubsectionChernClasses}

\begin{theorem}[Grothendieck~\cite{Grothendieck}]
There exist Chern classes $c_i(E)\in H^{2i}(X,\mathbb Z)$ for any vector bundle $E$ of rank $r$ of any smooth algebraic variety $X$, which are uniquely determined by the following properties

(i) (Vanishing) If $k > r$, then $c_k(E) = 0$.

(ii) (Whitney Sum) For a short exact sequence
\[ 0 \to E' \to E \to E'' \to 0, \]
the total Chern classes satisfy the property
\[ c(E) = c(E')c(E''). \]
In particular, $c_1(E) = c_1(E') + c_1(E'')$.


(iii) (Projective Bundle Formula) For a vector bundle $E$ over $X$, the total Chern class of the projective bundle $\mathbb P(E)$ is given by
\[ c(\mathbb P(E)) = \pi^*c(E) \cdot \sum_{i=0}^{r-1} \xi^i, \]
where $\pi: \mathbb P(E) \to X$ is the projection, $\xi = c_1(\mathcal{O}_{\mathbb P(E)}(1))$, and $r$ is the rank of $E$.

(iv) (Pull-back) For a morphism $f: Y \to X$ and a vector bundle $E$ over $X$, the Chern classes satisfy
\[ c(f^*E) = f^*c(E). \]

(v) (Normalization) If $L$ is a line bundle on $X$, then its first Chern class is given by
\[ c_1(L) = [D], \]
where $D$ is the divisor of a general rational section of $L$.
\end{theorem}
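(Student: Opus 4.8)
The plan is to treat existence and uniqueness as two separate claims, with uniqueness resting on the splitting principle and existence on Grothendieck's construction of Chern classes via the cohomology of projectivized bundles. First I would observe that properties (i), (ii), (v) already pin the theory down on sums of line bundles: axiom (v) gives $c_1(L)=[D]$, axiom (i) forces $c_i(L)=0$ for $i>1$, and the Whitney formula (ii) then determines $c(E)$ whenever $E=L_1\oplus\cdots\oplus L_r$ splits as a direct sum of line bundles, yielding $c(E)=\prod_i\bigl(1+c_1(L_i)\bigr)$.

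For uniqueness, the splitting principle promotes this to an arbitrary bundle $E$ of rank $r$. I would build the full flag bundle $f\colon \mathrm{Fl}(E)\to X$ by iterating the projective bundle construction, so that $f^*E$ carries a filtration with line-bundle quotients and hence has the same Chern classes as a direct sum of line bundles by repeated use of the Whitney formula. Because the projective bundle formula (iii) exhibits $H^*(\mathbb P(E))$ as a free $H^*(X)$-module, the iterated pullback $f^*\colon H^*(X)\to H^*(\mathrm{Fl}(E))$ is injective; combined with the pullback axiom (iv) and the previous paragraph, this shows $c(E)$ is forced by $c(f^*E)$, so any two theories satisfying (i)--(v) must coincide.

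For existence, I would define the classes following Grothendieck. Given $E$ of rank $r$, set $\xi=c_1(\mathcal O_{\mathbb P(E)}(1))$ on $\pi\colon\mathbb P(E)\to X$. The projective bundle formula guarantees that $1,\xi,\dots,\xi^{r-1}$ form an $H^*(X)$-basis of $H^*(\mathbb P(E))$, so $\xi^r$ is a unique combination $\xi^r=-\sum_{i=1}^r\pi^*c_i(E)\,\xi^{r-i}$, and this defines $c_i(E)\in H^{2i}(X,\mathbb Z)$. Axiom (i) then holds because only $c_1,\dots,c_r$ are produced; axiom (v) is checked directly from $\mathbb P(L)=X$ for a line bundle $L$; and axiom (iv) follows since $\mathbb P(f^*E)$ is the base change of $\mathbb P(E)$ and $\xi$ pulls back to $\xi$. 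The one substantive verification is the Whitney formula (ii), which I would prove by reducing via the splitting principle to the split case, where the defining relation factors as $\prod_i\bigl(\xi+\pi^*c_1(L_i)\bigr)=0$ and additivity becomes the elementary symmetric function identity.

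The main obstacle is precisely the Whitney sum formula for the constructed classes: it is not formal from the definition and is where the geometry genuinely enters, requiring either the splitting-principle reduction to the diagonal-summand case or a direct deformation-to-split argument over a short exact sequence. The other technical input, the injectivity of the flag-bundle pullback, is a consequence of the Leray--Hirsch structure already encoded in axiom (iii) and is comparatively routine once the projective bundle formula is in hand.
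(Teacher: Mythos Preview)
The paper does not actually prove this theorem: it is stated as background with a citation to Grothendieck~\cite{Grothendieck} and no argument is given. So there is no ``paper's own proof'' to compare against; the author is simply quoting the result in order to use Chern classes as a tool in the later Schubert-calculus computations.

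That said, your outline is the standard Grothendieck argument and is essentially correct. A couple of small points you should tighten if you intend to write this out in full. First, in the existence step, the normalization check (v) is slightly more delicate than ``$\mathbb P(L)=X$'': under the usual convention $\mathcal O_{\mathbb P(E)}(1)$ restricts on fibers to the dual of the tautological line, so for a rank-one bundle the identification $\mathbb P(L)\cong X$ sends $\xi$ to $c_1(L^\vee)$ or $c_1(L)$ depending on which projectivization you use; you need to track this sign to match the divisor-class normalization. Second, your plan to verify the Whitney formula by ``reducing via the splitting principle to the split case'' is circular as stated, since the splitting principle for the \emph{constructed} classes already presupposes functoriality and some form of multiplicativity; Grothendieck's actual argument proves Whitney by working directly on $\mathbb P(E)$ with the filtration $0\to\pi^*E'\to\pi^*E\to\pi^*E''\to0$ and analyzing how the tautological class $\xi$ interacts with the sub- and quotient projective bundles. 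This is the genuine content you correctly flagged as the main obstacle, but the reduction you propose is not quite the right mechanism.
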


The Chern character of a vector bundle \( E \) is another invariant of $E$ that can be defined by combinations of Chern classes of $E$. The Chern character has the following expansion (dropping \( E \) from the notation for simplicity):
\[
\mathrm{ch}(E) = \mathrm{rk} + c_1 + \frac{1}{2}(c_1^2 - 2c_2) + \frac{1}{6}(c_1^3 - 3c_1c_2 + 3c_3) + \cdots
\]
We refer the readers to~\cite[A.4]{Hartshorne} for strict definition of the Chern characters and the following useful property.

\begin{proposition}\label{PropChernChar}
    Let $E, E'$ be vector bundles on $X$. Then we have
    \[
    ch(E\otimes E') = ch(E).ch(E').
    \]
\end{proposition}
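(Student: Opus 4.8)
The plan is to reduce to the case of line bundles via the splitting principle and then compute directly using the exponential form of the Chern character. First I would record the two formal facts that make the Chern character tractable. One is that it is additive over direct sums, $\mathrm{ch}(F \oplus G) = \mathrm{ch}(F) + \mathrm{ch}(G)$: by the Whitney sum formula (property (ii)) the Chern roots of $F \oplus G$ are the union of the Chern roots of $F$ and of $G$, and the defining expansion $\mathrm{ch} = \sum_i e^{a_i}$ is a sum over roots. The other is that for a line bundle $L$ with $c_1(L) = x$ one has $\mathrm{ch}(L) = e^x = \sum_{k \geq 0} x^k/k!$, which is simply the defining expansion in the rank-one case.

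Next I would invoke the \emph{splitting principle}: there is a morphism $f \colon Y \to X$ from a smooth variety $Y$ such that the pullback $f^* \colon H^*(X) \to H^*(Y)$ is injective and both $f^* E$ and $f^* E'$ split as direct sums of line bundles. Since the Chern character commutes with pullback (property (iv)) and the asserted identity lives in $H^*(X)$, which injects into $H^*(Y)$, it suffices to prove the identity for $f^* E$ and $f^* E'$. In other words, I may assume from the outset that $E = \bigoplus_{i=1}^{r} L_i$ and $E' = \bigoplus_{j=1}^{s} M_j$ with Chern roots $a_i = c_1(L_i)$ and $b_j = c_1(M_j)$.

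With this reduction the computation is purely formal. Tensor product distributes over direct sums, so $E \otimes E' = \bigoplus_{i,j} L_i \otimes M_j$, and the additivity of the first Chern class of line bundles under tensor product (i.e.\ that $c_1 \colon \mathrm{Pic}(X) \to H^2(X,\mathbb Z)$ is a group homomorphism, which follows from the normalization property (v)) gives $c_1(L_i \otimes M_j) = a_i + b_j$. Applying additivity of $\mathrm{ch}$ together with the exponential formula then yields
\[
\mathrm{ch}(E \otimes E') = \sum_{i,j} e^{a_i + b_j} = \sum_{i,j} e^{a_i} e^{b_j} = \left( \sum_i e^{a_i} \right)\left( \sum_j e^{b_j} \right) = \mathrm{ch}(E) \cdot \mathrm{ch}(E'),
\]
where the final equality again uses additivity of the Chern character over the direct sum decompositions of $E$ and $E'$.

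The hard part is genuinely the splitting principle, namely the construction of the flag bundle $f \colon Y \to X$ and the injectivity of $f^*$ on cohomology; once this classical geometric input is granted, everything else is bookkeeping with the Chern roots. A minor point that still deserves care is that $\mathrm{ch}$ is a formal sum of homogeneous pieces of unbounded degree, but on the finite-dimensional variety $X$ only finitely many pieces are nonzero, so all manipulations with the exponential series above are finite sums and raise no convergence issue.
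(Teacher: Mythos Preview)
Your proof via the splitting principle and the exponential form of the Chern character is correct and is the standard textbook argument. The paper itself does not prove this proposition: it simply refers the reader to \cite[A.4]{Hartshorne}, where the same Chern-root computation you carry out appears. So your argument matches the cited source; there is nothing to compare beyond noting that the paper treats this as a known black box while you have supplied the details.
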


\subsection{Euler characteristic of $\Sigma$}
The following version of Hopf-Poincaré theorem, which is called the Hopf index theorem as in~\cite[Theorem 11.25]{BottTu}, relates the Euler characteristic of a complex manifold to its top degree Chern class.

\begin{theorem}[Hopf-Poincaré theorem]
    Let $X$ be a compact complex manifold of dimension $n$. Let $e(X) := \sum_{i = 0}^{2n} (-1)^i b_i(X)$ be the Euler characteristic of $X$. Let $c_n(X)\in H^{2n}(X,\mathbb Z)\cong \mathbb Z$ be the top degree Chern class of $X$ viewed as an integer. Then 
    \[
    e(X) = c_n(X).
    \]
\end{theorem}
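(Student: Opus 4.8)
The plan is to deduce the identity $e(X) = c_n(X)$ by computing each side separately and matching them through the Euler class of the real tangent bundle. On the topological side, I would invoke the \emph{Poincaré-Hopf index theorem}: if $v$ is a generic smooth (real) vector field on the compact manifold $X$, then $v$ has only isolated zeros, and the sum of the local indices of $v$ at these zeros equals the Euler characteristic $e(X) = \sum_i (-1)^i b_i(X)$. Equivalently, this index sum is the self-intersection number of the diagonal $\Delta \subset X \times X$, which by Künneth and the Lefschetz fixed point formula for the identity computes the alternating sum of the Betti numbers. This realizes $e(X)$ as an intersection-theoretic count of the zeros of a section of the real tangent bundle $T_{X,\mathbb R}$, and hence as the evaluation $\langle e(T_{X,\mathbb R}), [X]\rangle$ of the Euler class of $T_{X,\mathbb R}$ against the fundamental class.

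On the Chern-class side, the key is to interpret the top Chern class as the same Euler class. I would first recall that the underlying oriented real rank-$2n$ bundle of the holomorphic tangent bundle $T_X$ is exactly $T_{X,\mathbb R}$, with the orientation induced by the complex structure. I then use the standard compatibility that for any complex vector bundle $E$ of rank $n$, the top Chern class coincides with the Euler class of the underlying oriented real bundle,
\[
c_n(E) = e(E_{\mathbb R}).
\]
This is verified by the splitting principle, reducing to the case of a complex line bundle $L$, for which $c_1(L)$ is (by a direct Thom-class computation) the Euler class of the oriented real plane bundle $L_{\mathbb R}$; multiplicativity of both the total Chern class and the Euler class under direct sums then yields the general statement. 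Applying this to $E = T_X$ gives $c_n(X) = c_n(T_X) = e(T_{X,\mathbb R})$.

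Combining the two computations, and reading the class in $H^{2n}(X,\mathbb Z)$ as an integer via $\int_X \colon H^{2n}(X,\mathbb Z) \cong \mathbb Z$, we obtain
\[
c_n(X) = \langle e(T_{X,\mathbb R}), [X]\rangle = e(X),
\]
as desired. The entire argument thus hinges on the single bridge $\langle e(T_{X,\mathbb R}),[X]\rangle$, approached from the differential-topological side by Poincaré-Hopf and from the characteristic-class side by the identity $c_n = e(\,\cdot\,_{\mathbb R})$.

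The step I expect to require the most care is the matching of orientations and signs. The Poincaré-Hopf theorem produces the Euler characteristic for \emph{any} generic section of the real tangent bundle, but to identify the resulting index sum with $\langle e(T_{X,\mathbb R}),[X]\rangle$ one must fix the complex orientation and check that the local index of a transverse zero agrees with the local intersection multiplicity defining the Euler class. In the holomorphic setting these local indices are automatically positive, which is the geometric reason the two counts agree with no cancellation; the remainder is the bookkeeping of the splitting principle, which is standard.
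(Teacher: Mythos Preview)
Your proof sketch is correct and follows the standard route: Poincar\'e--Hopf identifies $e(X)$ with the evaluation of the Euler class of $T_{X,\mathbb R}$, and the splitting principle identifies $c_n(T_X)$ with that same Euler class. The paper, however, does not actually prove this theorem at all---it simply states it and cites \cite[Theorem 11.25]{BottTu} as a reference. So there is nothing to compare against on the paper's side; your argument is essentially the proof one would find in that reference, and the orientation/sign bookkeeping you flag is indeed the only point requiring care.
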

By the Hopf-Poincaré theorem, the Euler class of the surface $\Sigma$ is $c_2(\Sigma)$ viewed as an integer via the identification $\int_\Sigma: H^4(\Sigma, \mathbb Z)\cong \mathbb Z$.

\subsection{Euler characteristic of $\mathcal{O}_\Sigma$}
The Euler characteristic of the structure sheaf of a surface is also determined by its Chern classes, according to the Noether's formula that we discuss below.

Let $S$ be an arbitary smooth projective surface, we have
\begin{theorem}[Noether's formula]\label{ThmNoetherFormula}
    Let $S$ be a smooth projective surface, then we have 
    \[
    \chi(\mathcal O_S) = \frac{1}{12}(c_1(S)^2 + c_2(S)).
    \]
\end{theorem}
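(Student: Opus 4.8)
The plan is to derive Noether's formula from the Hirzebruch--Riemann--Roch theorem, which I take as known input: for any coherent sheaf $\mathcal{F}$ on a smooth projective variety $X$ of dimension $n$, the holomorphic Euler characteristic is given by
\[
\chi(X, \mathcal{F}) = \int_X \mathrm{ch}(\mathcal{F}) \cdot \mathrm{td}(T_X),
\]
where $\mathrm{td}(T_X)$ denotes the Todd class of the tangent bundle and only the top-degree component of the product contributes to the integral. This is the natural companion to the Chern character formalism recalled in Section~\ref{SubsectionChernClasses}, refined so as to compute an alternating sum of cohomology dimensions rather than the dimension of a single group.

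First I would specialize to $\mathcal{F} = \mathcal{O}_S$ on the surface $S$. Since $\mathcal{O}_S$ is the trivial line bundle of rank one, its Chern character is $\mathrm{ch}(\mathcal{O}_S) = 1$, so the formula collapses to $\chi(\mathcal{O}_S) = \int_S \mathrm{td}(T_S)$. The problem is thereby reduced to extracting the top-degree component, living in $H^4(S)$, of the Todd class of $T_S$, and then integrating it.

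Next I would expand the Todd class. Writing the Chern roots of $T_S$ as $a_1, a_2$, so that $c_1(S) = a_1 + a_2$ and $c_2(S) = a_1 a_2$, the Todd class is $\prod_{i} \frac{a_i}{1 - e^{-a_i}}$, and a direct power-series computation gives the single-factor expansion $\frac{a}{1 - e^{-a}} = 1 + \tfrac{1}{2}a + \tfrac{1}{12}a^2 + \cdots$. Multiplying the two factors and collecting terms of total degree $2$ yields
\[
\frac{a_1^2 + a_2^2}{12} + \frac{a_1 a_2}{4} = \frac{c_1(S)^2 - 2c_2(S)}{12} + \frac{c_2(S)}{4} = \frac{1}{12}\bigl(c_1(S)^2 + c_2(S)\bigr),
\]
using $a_1^2 + a_2^2 = c_1(S)^2 - 2c_2(S)$. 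Integrating this degree-$2$ component over $S$ via the identification $\int_S \colon H^4(S, \mathbb{Z}) \cong \mathbb{Z}$ gives exactly $\chi(\mathcal{O}_S) = \tfrac{1}{12}(c_1(S)^2 + c_2(S))$, as claimed.

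The genuine depth of this argument lies entirely in the Hirzebruch--Riemann--Roch theorem itself; once that is granted, the remainder is purely formal Todd-class bookkeeping. I therefore expect the main obstacle to be nothing more than citing or justifying HRR, while the power-series manipulation and the collection of the degree-$2$ term are routine. (An alternative, more topological route would pass through the Hirzebruch signature theorem together with the relation $c_2(S) = e(S)$ furnished by the Hopf--Poincar\'e theorem already recorded above, but the HRR derivation is cleaner and stays within the algebraic framework of Section~\ref{SubsectionChernClasses}.)
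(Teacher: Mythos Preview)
Your derivation via Hirzebruch--Riemann--Roch is correct: specializing to $\mathcal{F}=\mathcal{O}_S$ reduces the formula to $\int_S \mathrm{td}(T_S)$, and your Chern-root expansion of the Todd class correctly isolates the degree-$2$ term $\tfrac{1}{12}(c_1^2+c_2)$. This is the standard modern argument and it proves the theorem in full generality.

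The paper, however, takes a different and deliberately more elementary route. It explicitly declines to invoke (Grothendieck/Hirzebruch)--Riemann--Roch, calling its proof ``beyond the scope of this project,'' and instead only \emph{verifies} Noether's formula for the special case of a smooth surface $S\subset\mathbb{P}^3$ of degree $d$: it computes $c_1(S)^2$ and $c_2(S)$ via the normal exact sequence, computes $\chi(\mathcal{O}_S)$ directly from the structure-sheaf sequence $0\to\mathcal{O}_{\mathbb{P}^3}(-d)\to\mathcal{O}_{\mathbb{P}^3}\to\mathcal{O}_S\to 0$ together with the explicit formula $\chi(\mathcal{O}_{\mathbb{P}^3}(m))=\binom{m+3}{3}$, and then checks by hand that both sides equal $\tfrac{1}{6}(d^3-6d^2+11d)$. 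The general case is simply referred to Griffiths--Harris. So your argument is shorter and covers the full statement, at the cost of treating HRR as a black box; the paper's argument is self-contained and concrete but establishes only a special case, serving more as an illustrative check than a proof of the theorem as stated.
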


Modern algebraic geometry views Noether's formula as a special case of the Grothendieck-Riemann-Roch formula whose proof is beyond the scope of this project. More elementary proof of the Noether's formula exists in the litterature (see, for example, \cite[Section 4.6]{GriffithsHarris}). Here we follow the first step in~\cite[Section 4.6]{GriffithsHarris} that proves Noether's formula for surfaces in $\mathbb P^3$. We refer the readers to~\cite[Section 4.6]{GriffithsHarris} for the proof of Noether's formula for a general smooth projective surface (not necessarily in $\mathbb P^3$). To start with, we calculate the Chern numbers of $S\subset \mathbb P^3$.

\begin{lemma}\label{LmmChernClassesOfSurfaceInP3}
For a smooth surface \( S \) of degree \( d \) in \(\mathbb{P}^3\),
the first Chern class and the second Chern class of the tangent bundle $T_S$ over $S$ are:
\[ c_1(S) = (4-d)H. \]
\[ c_2(S) = d^3 - 4d^2 + 6d. \]
Moreover, we have $c_1(S)^2 = d^3 - 8d^2 + 16d$.
\end{lemma}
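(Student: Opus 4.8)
The plan is to compute the two Chern classes of the tangent bundle $T_S$ by using the adjunction-type exact sequence relating $T_S$ to the restriction of the tangent bundle of $\mathbb{P}^3$, together with the Whitney sum formula and the standard Chern classes of $\mathbb{P}^3$. First I would write down the normal bundle sequence for the smooth degree-$d$ surface $S \subset \mathbb{P}^3$, namely
\[
0 \to T_S \to T_{\mathbb{P}^3}|_S \to N_{S/\mathbb{P}^3} \to 0,
\]
and observe that since $S$ is the zero locus of a section of $\mathcal{O}_{\mathbb{P}^3}(d)$, the normal bundle is $N_{S/\mathbb{P}^3} = \mathcal{O}_{\mathbb{P}^3}(d)|_S$, a line bundle with first Chern class $dH$, where $H$ denotes the restriction of the hyperplane class. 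Applying the Whitney sum formula gives the total Chern class identity $c(T_{\mathbb{P}^3}|_S) = c(T_S)\, c(N_{S/\mathbb{P}^3})$, so that $c(T_S) = c(T_{\mathbb{P}^3}|_S)/(1 + dH)$.

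Next I would supply the Chern class of $T_{\mathbb{P}^3}$ from the Euler sequence
\[
0 \to \mathcal{O}_{\mathbb{P}^3} \to \mathcal{O}_{\mathbb{P}^3}(1)^{\oplus 4} \to T_{\mathbb{P}^3} \to 0,
\]
which by the Whitney formula yields $c(T_{\mathbb{P}^3}) = (1 + H)^4$. Restricting to $S$ and dividing by $(1 + dH)$, I would expand
\[
c(T_S) = (1 + H)^4 (1 + dH)^{-1} = (1 + 4H + 6H^2 + \cdots)(1 - dH + d^2 H^2 - \cdots),
\]
working modulo $H^3$ since $\dim S = 2$ forces $H^3 = 0$ in $H^*(S)$. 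Reading off the degree-one part gives $c_1(S) = (4 - d)H$, and collecting the degree-two part gives $c_2(S) = (6 - 4d + d^2)H^2$.

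To finish I would convert the $H^2$ coefficients into integers using the degree relation $\int_S H^2 = d$, which holds because $H^2$ is represented by $d$ times the point class on a degree-$d$ surface (equivalently $H^3 = d$ in $H^*(\mathbb{P}^3)$ restricted appropriately). This turns $c_2(S) = (d^2 - 4d + 6)H^2$ into the number $d^3 - 4d^2 + 6d$, and turns $c_1(S)^2 = (4-d)^2 H^2$ into $(4-d)^2 d = d^3 - 8d^2 + 16d$. The computation is essentially formal once the two exact sequences are in hand; the only genuine subtlety — and the step I would be most careful about — is the bookkeeping of the self-intersection $\int_S H^2 = d$, since conflating the abstract class $H^2 \in H^4(S)$ with its integer value is exactly where sign or factor errors tend to creep in. Everything else is routine truncated power-series arithmetic in the cohomology ring.
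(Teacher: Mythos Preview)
Your proof is correct and follows essentially the same approach as the paper: both use the normal bundle sequence for $S \subset \mathbb{P}^3$, the identification $N_{S/\mathbb{P}^3} \cong \mathcal{O}_S(d)$, the formula $c(T_{\mathbb{P}^3}) = (1+H)^4$, and then the self-intersection $H^2 = d$ to pass to integers. The only cosmetic difference is that you invert $(1+dH)$ as a truncated power series while the paper writes $(1+H)^4 = c(S)(1+dH)$ and compares coefficients, and you justify $c(T_{\mathbb{P}^3}) = (1+H)^4$ via the Euler sequence whereas the paper simply quotes it.
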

\begin{proof} 
Consider the short exact sequence
\[ 
0 \to T_S \to T_{\mathbb{P}^3}|_S \to N_{S|\mathbb{P}^3} \to 0. 
\]
Here, \(T_S\) is the tangent bundle of \( S \), \( T_{\mathbb{P}^3}|_S \) is the restriction of the tangent bundle $T_{\mathbb{P}^3}$ of \(\mathbb{P}^3\) to \( S \), and \( N_{S|\mathbb{P}^3} \) is the normal bundle of \( S \) in \(\mathbb{P}^3\). From the above sequence, we have 
$$c(T_{\mathbb{P}^3}|_S) = c(S) \cdot c(N_{S|\mathbb{P}^3}).$$ 
We know that the total Chern class of \( T_{\mathbb{P}^3} \) is $c(T_{\mathbb{P}^3}) = (1 + h)^4$,
where \( h \) is the hyperplane class in \(\mathbb{P}^3\). Restricting to \( S \), we get 
$$c(T_{\mathbb{P}^3}|_S) = (1 + h|_S)^4=(1+H)^4$$ 
where $H:= h|_S $.  

Now, the normal bundle \( N_{S/\mathbb{P}^3} \) is \(\mathcal{O}(d) \), so its total Chern class is
\[ c(N_{S|\mathbb{P}^3}) = 1 + dH. \]
Hence we have 
\[ (1 + H)^4 = c(S) \cdot (1 + dH). \]
Expanding and comparing terms, we get
\[
c_1(S) = (4 - d)H, 
\]
\[
c_2(S)  = (6 - 4d + d^2)H^2. 
\]

Hence  \[ 
c_1(S)^2 = ((4 - d)H)^2 = (4 - d)^2H^2 = (16 - 8d + d^2)H^2.
\]
Since \( H^2 \) represents the self-intersection of two hyperplane sections on a smooth surface of degree \( d \) in \(\mathbb{P}^3\), we have \( H^2 = d \). Therefore we have
$$ 
c_1(S)^2 =d^3 - 8d^2 + 16d.
$$
and
$$c_2(S) = d^3 - 4d^2 + 6d. $$
\end{proof}
Now we can prove the Noether's formula in the special case where $S$ is a surface in $\mathbb P^3$.
\begin{proof}[Proof of Noether's formula when $S\subset \mathbb P^3$]
Firstly, we consider the short exact sequence
$$0 \to \mathcal{O}_{\mathbb{P}^3}(-d) \to \mathcal{O}_{\mathbb{P}^3}\to \mathcal{O}_S \to 0.$$
According to the property of Euler characteristic, we have
$$\chi (\mathcal{O}_S) = \chi(\mathcal{O}_{\mathbb{P}^3}) - \chi(\mathcal{O}_{\mathbb{P}^3}(-d)).$$ 
According to \cite[Theorem II.5.1]{Hartshorne}, the  Euler characteristic for $\mathcal{O}_{\mathbb{P}^n}$(d) is given by
$$\chi (\mathcal{O}_{\mathbb{P}^n}(d) )= \binom{d+n}{n}.$$
In particular, we have:
$$\chi(O_{\mathbb{P}^3}) = 1,$$
$$\chi(O_{\mathbb{P}^3}(-d)) = \frac{(-d+3)(-d+2)(-d+1)}{6}.$$
Hence we have
$$\chi (\mathcal{O}_S) = 1 -  \frac{(-d+3)(-d+2)(-d+1)}{6}  = \frac{d^3 - 6d^2 + 11d}{6}.$$
However, by Lemma~\ref{LmmChernClassesOfSurfaceInP3}, we have
$$\frac{1}{12}(c_1(S)^2+c_2(S)) = \frac{2d^3 - 12d^2 + 22d}{12} = \frac{d^3 - 6d^2 + 11d}{6}. $$
Therefore, $$\chi (\mathcal{O}_S) = \frac{1}{12}(c_1(S)^2+c_2(S)) .$$
\end{proof}

Applying Theorem~\ref{ThmNoetherFormula} directly to the surface $\Sigma$, the Euler characteristic $\chi(\mathcal O_\Sigma)$ of the surface $\Sigma$ is $\frac{1}{12}(c_1(\Sigma)^2 + c_2(\Sigma))$.

\subsection{The Abel-Jacobi map of the surface}

\begin{definition}
The \textit{intermediate Jacobian} \( J^{2k - 1}(X) \) for a smooth projective variety \( X \) is defined as
\[
J^{2k - 1}(X) = H^{2k-1}(X, \mathbb{C}) / (F^k H^{2k-1}(X) + H^{2k-1}(X, \mathbb{Z}))
\]
where \( F^k \) denotes the \( k \)-th level of the Hodge filtration.
\end{definition}

\begin{definition}
The \textit{Albanese variety} \( \text{Alb}(X) \) of a smooth projective variety \( X\) is defined as:
\[
\text{Alb}(X) = H^0(X, \Omega^1_X)^* / H_1(X, \mathbb{Z}).
\]
\end{definition}

\begin{definition} Let $X$ be a smooth projective variety and $x_0$ is a point on $X$, then the \textit{Albanese map} \( \mathrm{alb}: X\to \text{Alb}(X) \) is a morphism defined by integrating holomorphic 1-forms on the path connecting an arbitrary point $x\in X$ and the fixed point $x_0$.
\end{definition}
 The Albanese map is universal among morphisms from \( X\) to abelian varieties, namely, for any abelian variety $A$ and any morphism $\phi: X \to A$ that sends $x_0$ to $0\in A$, there is a unique morphism $\mathrm{Alb}(X) \to A$ such that $\phi$ factorizes through this morphism. Let \( J^5(Y) \) be the intermediate Jacobian of the cubic 5-fold \( Y \), and \( \text{Alb}(\Sigma) \) the Albanese variety of the surface \( \Sigma \). Consider the Abel-Jacobi map 
\[
a: \text{Alb}(\Sigma) \to J^5Y,
\]
defined by the universal property of the Albanese map of $\Sigma$.

The following theorem of Collino~\cite{Collino} is crucial in our calculations of Hodge numbers of the surface $\Sigma$.
\begin{theorem}[Collino~\cite{Collino}]\label{ThmCollino}
The Abel-Jacobi map \( a: \text{Alb}(\Sigma) \to J^5(Y) \) is an isomorphism.
\end{theorem}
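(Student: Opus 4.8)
The plan is to treat this as the exact analogue, one dimension higher, of the Clemens--Griffiths theorem that identifies the intermediate Jacobian of a cubic threefold with the Albanese of its Fano surface of lines; here the planes in the cubic $5$-fold $Y$ play the role that lines play for the threefold. First I would pin down the target. By the Lefschetz hyperplane theorem $H^k(Y,\mathbb Z)=H^k(\mathbb P^6,\mathbb Z)$ for $k\neq 5$, so all interesting cohomology of $Y$ sits in degree $5$, and a Griffiths residue computation in the Jacobian ring $R=\mathbb C[x_0,\dots,x_6]/J_f$ of the defining cubic $f$ determines the Hodge decomposition of $H^5(Y)$: the isomorphisms $H^{5-p,p}_{\mathrm{prim}}(Y)\cong R_{3p-4}$ give $h^{3,2}=h^{2,3}=\dim R_2=21$ and all other Hodge numbers zero. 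Thus $H^5(Y)$ is, up to a Tate twist, an effective weight-one Hodge structure, its cup-product form is unimodular, and $J^5(Y)$ is a principally polarized abelian variety of dimension $21$. The theorem then asserts that the weight-one Hodge structure $H^1(\Sigma)$ is isomorphic, as a polarized integral Hodge structure, to a Tate twist of $H^5(Y)$.

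To produce this isomorphism I would use the universal plane $\mathcal P=\{(x,y)\in\Sigma\times Y:\ y\in P_x\}$, with projections $p:\mathcal P\to\Sigma$ and $q:\mathcal P\to Y$; since the fibers of $p$ are copies of $\mathbb P^2$, one has $\dim\mathcal P=4$. The correspondence $[\mathcal P]$ induces the cylinder homomorphism
\[
\Psi:=p_*\,q^*:\ H^5(Y)\longrightarrow H^1(\Sigma),
\]
a morphism of Hodge structures of type $(-2,-2)$ which carries $H^{3,2}(Y)$ into $H^{1,0}(\Sigma)$ and $H^{2,3}(Y)$ into $H^{0,1}(\Sigma)$. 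By construction and the universal property of the Albanese, the Abel--Jacobi map $a$ is the morphism of tori induced by $\Psi$ (equivalently by its transpose), so it suffices to show that $\Psi$ is an isomorphism of integral Hodge structures compatible with the two polarizations.

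The heart of the argument is the infinitesimal study of $a$. Its differential $da:H^1(\Sigma,\mathcal O_\Sigma)=H^{0,1}(\Sigma)\to H^5(Y)/F^3H^5(Y)=H^{2,3}(Y)\cong R_5$ is governed by the infinitesimal Abel--Jacobi construction, which at a plane $x=[P]$ sends $T_x\Sigma=H^0(P,N_{P/Y})$ into $H^{2,3}(Y)$ by cup product against the class of $P$ followed by the residue identification. I would compute this map explicitly from the normal bundle sequence of $P\cong\mathbb P^2$ in $Y$ together with the Griffiths residue description of $H^{2,3}(Y)$, aiming to prove that $da$ is an isomorphism between $21$-dimensional spaces: injectivity is an infinitesimal-Torelli statement, while surjectivity is the assertion that the tangent directions of the family of planes generate the graded piece $R_5$ of the Jacobian ring. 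Granting this, $a$ is an isogeny and, at the same stroke, $h^{0,1}(\Sigma)=21$ and $b_1(\Sigma)=42=b_5(Y)$ are forced, which is exactly the numerical input the rest of the paper needs.

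It then remains to show that this isogeny has degree one and respects the principal polarizations. Here I would invoke the irreducibility of the monodromy of the universal family of smooth cubic $5$-folds acting on $H^5(Y,\mathbb Q)$, so that $\ker\Psi$, being a monodromy-stable sub-Hodge structure, vanishes as soon as $\Psi\neq0$, and then compare the unimodular cup-product form on $H^5(Y,\mathbb Z)$ with the canonical polarization of $\mathrm{Alb}(\Sigma)$; matching these integrally is cleanest by degenerating $Y$ to a special cubic where both lattices and forms can be written down. The step I expect to be the main obstacle is the surjectivity in the infinitesimal computation, namely that the family of planes generates all of $H^{2,3}(Y)\cong R_5$. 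In the Clemens--Griffiths model one dimension down, the universal line dominates the cubic threefold and the ``tube over $Y$'' argument yields surjectivity almost for free; here $q:\mathcal P\to Y$ has only $4$-dimensional image inside the $5$-fold $Y$ and is not dominant, so the planes do not sweep out $Y$ and the generation statement is genuinely more delicate. Establishing it, together with the integral comparison of polarizations, is where the essential work of the proof is concentrated.
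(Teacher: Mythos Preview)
The paper does not contain a proof of this statement: Theorem~\ref{ThmCollino} is quoted from Collino's 1986 paper and used as a black box, so there is no ``paper's own proof'' to compare your proposal against. What the paper actually needs from Collino is only the numerical consequence $h^{1,0}(\Sigma)=21$ (equivalently $b_1(\Sigma)=42$), which feeds into Theorems~\ref{3.18} and~\ref{ThmHodgeAndChern}.

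Your outline is a faithful sketch of the Clemens--Griffiths strategy transported to the $5$-fold setting, and you correctly isolate the one place where the analogy breaks: the incidence variety $\mathcal P$ has dimension $4$, so $q:\mathcal P\to Y$ is not dominant, and the ``tube over cycle'' argument that gives surjectivity of the cylinder map for cubic threefolds is unavailable. That is genuinely the crux; Collino handles it not by the tangent-space computation you describe but by an explicit degeneration argument. One logical point to watch in your write-up: when you say you will show $da$ is ``an isomorphism between $21$-dimensional spaces'', the dimension of the source $H^{0,1}(\Sigma)$ is precisely what is unknown, so you must establish injectivity and surjectivity of $da$ separately and let the equality of dimensions be a consequence, not an input. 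As written, the proposal is a plan rather than a proof---several steps are phrased as ``I would compute'' or ``I would invoke''---so it would not stand on its own, but as a roadmap toward reproving Collino's theorem it is pointed in the right direction.
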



\subsection{Hodge numbers as Chern classes}

\begin{theorem}\label{3.18}
    The Betti numbers of the surface $\Sigma$ are as follows.

    (i) $b_0(\Sigma) = b_4(\Sigma) = 1$.
    
    (ii) $b_1(\Sigma) = b_3(\Sigma) = 42$.
    
    (iii) $b_2(\Sigma) = c_2(\Sigma) + 82$.
\end{theorem}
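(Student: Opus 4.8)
The plan is to treat the three parts in increasing order of difficulty, reducing everything to (a) elementary topology of compact oriented real $4$-manifolds, (b) Collino's isomorphism from Theorem~\ref{ThmCollino}, and (c) the Hopf--Poincaré theorem. Parts (i) and (iii) are formal once part (ii) is in hand, so the real work lies in pinning down $b_1(\Sigma)$.

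First I would dispose of part (i). Since $\Sigma$ is a smooth projective variety it is in particular a connected compact oriented real $4$-manifold, whence $b_0(\Sigma)=1$, and Poincaré duality gives $b_4(\Sigma)=b_0(\Sigma)=1$. The only point requiring comment is connectedness (equivalently irreducibility) of $\Sigma$, which is what lets us assert $b_0=1$; this is part of the structure of $\Sigma$ as a smooth surface of general type established earlier.

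Next, for part (ii), the engine is Collino's Theorem~\ref{ThmCollino}, which identifies $\mathrm{Alb}(\Sigma)$ with the intermediate Jacobian $J^5(Y)$. By definition $\dim_{\mathbb C}\mathrm{Alb}(\Sigma)=h^{1,0}(\Sigma)$, and Hodge symmetry gives $b_1(\Sigma)=h^{1,0}(\Sigma)+h^{0,1}(\Sigma)=2h^{1,0}(\Sigma)$; on the other side, $\dim_{\mathbb C}J^5(Y)=h^{2,3}(Y)+h^{1,4}(Y)+h^{0,5}(Y)$ is the dimension of the part of $H^5(Y,\mathbb C)$ not lying in the Hodge filtration piece $F^3$. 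So everything reduces to the Hodge numbers of the middle cohomology $H^5(Y,\mathbb C)$ of the cubic $5$-fold, all of which is primitive by the Lefschetz hyperplane theorem. I would compute these by the Griffiths residue description: for a smooth degree-$d$ hypersurface in $\mathbb P^{n+1}$ one has $H^{n-q,q}_{\mathrm{prim}}\cong R_{(q+1)d-n-2}$, where $R$ is the Jacobian ring. For $n=5$, $d=3$ this forces $h^{5,0}=h^{4,1}=0$ and $h^{3,2}=h^{2,3}=\dim R_2$, and since the seven partials of a general cubic are independent quadrics, $\dim R_2=\binom{8}{2}-7=21$. Hence $\dim_{\mathbb C}J^5(Y)=21$, so $h^{1,0}(\Sigma)=21$ and $b_1(\Sigma)=b_3(\Sigma)=42$, the last equality by Poincaré duality.

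Finally, part (iii) is pure bookkeeping: by the Hopf--Poincaré theorem $c_2(\Sigma)=e(\Sigma)=\sum_{i=0}^4(-1)^ib_i(\Sigma)=b_0-b_1+b_2-b_3+b_4$, and substituting the values found above gives $c_2(\Sigma)=1-42+b_2(\Sigma)-42+1=b_2(\Sigma)-82$, i.e.\ $b_2(\Sigma)=c_2(\Sigma)+82$. The genuine difficulty of the whole statement is entirely absorbed into Collino's Theorem~\ref{ThmCollino}: without it the identity $h^{1,0}(\Sigma)=21$ would demand a direct and far harder analysis of $H^0(\Sigma,\Omega^1_\Sigma)$. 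Granting that theorem, the single nontrivial computation is the Jacobian-ring dimension count, and the remainder is formal manifold topology.
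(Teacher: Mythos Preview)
Your proof is correct and follows essentially the same route as the paper: connectedness and Poincar\'e duality for (i), Collino's isomorphism $\mathrm{Alb}(\Sigma)\cong J^5(Y)$ to reduce (ii) to knowing $b_5(Y)=42$, and Hopf--Poincar\'e bookkeeping for (iii). The only difference is that where the paper simply cites Huybrechts for $b_5(Y)=42$, you compute it directly via the Griffiths Jacobian-ring description, obtaining $h^{3,2}(Y)=\dim R_2=\binom{8}{2}-7=21$; this makes your argument slightly more self-contained but is not a materially different approach.
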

\begin{proof}
    (i) is obvious since the surface $\Sigma$ is connected~\cite[Théorème 2.1 (c)]{DM}. 
 For (ii), we first prove that $b_3(\Sigma) = b_5(Y)$ . 
    By Theorem~\ref{ThmCollino}, the Abel-Jacobi map \( a: \text{Alb}(\Sigma) \to J^5(Y) \) is an isomorphism. This implies that the dimensions of the Albanese variety \( \text{Alb}(\Sigma) \) and the intermediate Jacobian \( J^5(Y) \) are equal. We can compute the dimension of the Albanese variety \( \text{Alb}(\Sigma) \) for the surface \( \Sigma \) as follows
\[
\dim\text{Alb}(\Sigma) = \dim H^{2,1}(\Sigma, \mathbb{C}) = \frac{1}{2}\dim H^3(\Sigma, \mathbb{C}).
\]
On the other hand, the intermediate Jacobian \(J^5(Y) \) of the cubic 5-fold \( Y \) is defined as
\[
J^5(Y) = H^{3,2}(Y, \mathbb{C}) \oplus H^{4,1}(Y, \mathbb{C}) \oplus H^{5,0}(Y, \mathbb{C}),
\]
so the dimension of \( J^5(Y) \) is
\[
\dim(J^5(Y)) = \frac{1}{2} \dim H^5(Y, \mathbb{C}).
\]
Since \( \text{Alb}(\Sigma) \cong J^5(Y) \), we have
\[
\dim H^3(\Sigma, \mathbb{C}) = \dim H^5(Y, \mathbb{C}).
\]
Next, by~\cite[Corollary 1.12]{HuybrechtsCubic}, we find $b_5(Y) = 42$. Hence, $b_3(\Sigma0 = 42$.
    Finally, according to the Poincaré duality, we have $b_1(\Sigma) = b_3(\Sigma) = 42$. As for (iii), the Hopf-Poincaré theorem shows that $\sum_{i = 0}^4 (-1)^i b_i(\Sigma) = c_2(\Sigma)$. By (i) and (ii), we find $b_2(\Sigma) = c_2(\Sigma) + 82$.
\end{proof}

\begin{theorem}\label{ThmHodgeAndChern}
    The Hodge numbers of the surface $\Sigma$ is as follows.\\
    (i) $h^{0, 0}(\Sigma) = h^{2, 2}(\Sigma) = 1$.\\
    (ii) $h^{1, 0}(\Sigma) = h^{0, 1}(\Sigma) = h^{2, 1}(\Sigma) = h^{1, 2}(\Sigma) = 21$.\\
    (iii) $h^{2, 0}(\Sigma) = h^{0, 2}(\Sigma) = \frac{1}{12}(c_1(\Sigma)^2 + c_2(\Sigma)) + 20$.\\
    (iv) $h^{1, 1}(\Sigma) = \frac{1}{6}(5c_2(\Sigma) - c_1(\Sigma)^2) + 42$.
\end{theorem}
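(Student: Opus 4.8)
The plan is to read off every Hodge number from the formal structure of the Hodge diamond of a compact Kähler surface, feeding in the already-established Betti numbers (Theorem~\ref{3.18}) and the holomorphic Euler characteristic $\chi(\mathcal{O}_\Sigma)$. For a surface ($n=2$) the diamond is constrained by Hodge symmetry $h^{p,q}=h^{q,p}$ and Serre duality $h^{p,q}=h^{2-p,2-q}$, and the Betti numbers split as $b_0=h^{0,0}$, $b_1=h^{1,0}+h^{0,1}$, and $b_2=h^{2,0}+h^{1,1}+h^{0,2}$ (with $b_3,b_4$ determined by Poincaré duality). So the whole diamond is pinned down by the three independent quantities $h^{0,0}$, $h^{1,0}$, and $h^{0,2}$, and I only need to compute these three.

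For part (i), since $\Sigma$ is connected and compact, $H^0(\Sigma,\mathcal{O}_\Sigma)=\mathbb{C}$, giving $h^{0,0}=1$, and Serre duality forces $h^{2,2}=h^{0,0}=1$. For part (ii), Hodge symmetry gives $h^{1,0}=h^{0,1}$, so the relation $b_1(\Sigma)=2h^{1,0}=42$ from Theorem~\ref{3.18}(ii) yields $h^{1,0}=h^{0,1}=21$; Serre duality then gives $h^{2,1}=h^{0,1}=21$ and $h^{1,2}=h^{1,0}=21$, settling the whole first-and-third rows of the diamond.

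Part (iii) is where the Chern numbers enter. I would compute $\chi(\mathcal{O}_\Sigma)$ in two ways: Noether's formula (Theorem~\ref{ThmNoetherFormula}) gives $\chi(\mathcal{O}_\Sigma)=\tfrac{1}{12}(c_1(\Sigma)^2+c_2(\Sigma))$, while the Dolbeault description $H^q(\Sigma,\mathcal{O}_\Sigma)\cong H^{0,q}(\Sigma)$ gives $\chi(\mathcal{O}_\Sigma)=h^{0,0}-h^{0,1}+h^{0,2}=1-21+h^{0,2}$. Equating these and using $h^{2,0}=h^{0,2}$ produces $h^{2,0}=h^{0,2}=\tfrac{1}{12}(c_1(\Sigma)^2+c_2(\Sigma))+20$. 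For part (iv), I would use the middle row: from $b_2(\Sigma)=2h^{2,0}+h^{1,1}$ together with $b_2(\Sigma)=c_2(\Sigma)+82$ (Theorem~\ref{3.18}(iii)), substituting the value of $h^{2,0}$ just obtained and simplifying gives $h^{1,1}=\tfrac{1}{6}(5c_2(\Sigma)-c_1(\Sigma)^2)+42$.

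There is no genuine obstacle inside this theorem: once the inputs are in place it is pure bookkeeping on the Hodge diamond. The real content lives upstream, in the facts being cited — the value $b_1(\Sigma)=42$ (which rests on Collino's theorem that the Abel--Jacobi map is an isomorphism, Theorem~\ref{ThmCollino}) and Noether's formula. The only point that demands care is the arithmetic: one must track the two linear relations coming from $b_1$ and from $\chi(\mathcal{O}_\Sigma)$ so that the constants $20$ in (iii) and $42$ in (iv) emerge correctly, and check internal consistency (for instance that $b_3=h^{2,1}+h^{1,2}=42$ agrees with Poincaré duality).
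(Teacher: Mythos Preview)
Your proposal is correct and follows essentially the same route as the paper: connectedness for (i), Noether's formula for (iii), and the decomposition $b_2=2h^{2,0}+h^{1,1}$ together with Theorem~\ref{3.18}(iii) for (iv). The one minor variation is in (ii): you read off $h^{1,0}$ from $b_1(\Sigma)=42$ via $b_1=2h^{1,0}$, whereas the paper invokes Collino's isomorphism $\mathrm{Alb}(\Sigma)\cong J^5(Y)$ at the level of Hodge structures to match $h^{2,1}(\Sigma)$ directly with $h^{3,2}(Y)=21$ from the known Hodge numbers of the cubic fivefold. Since your input $b_1=42$ was itself derived from Collino's theorem in Theorem~\ref{3.18}, the two arguments have the same content and differ only in packaging.
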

\begin{proof}
    (i) is obvious since $\Sigma$ is a connected surface. For (ii), Theorem~\ref{ThmCollino} and the table at the end of \cite[Section 1.4]{HuybrechtsCubic} show that $h^{3,2}(Y) = h^{2,1}(\Sigma) = 21$. The other three Hodge numbers are also $21$ by the symmetries mentioned below Defintion~\ref{DefHodgeNumbers}. As for (iii), by Hodge symmetry, $h^{2, 0}(\Sigma) = h^{0, 2}(\Sigma) $. Hence, it suffices to compute $h^{0, 2}(\Sigma)$. By Noether's formula, we have $h^{0, 0}(\Sigma) - h^{0, 1}(\Sigma) + h^{0, 2}(\Sigma) = \chi(\mathcal O_\Sigma) = \frac{1}{12}(c_1(\Sigma)^2 + c_2(\Sigma))$. Hence, by (i) and (ii), we get $h^{0, 2}(\Sigma) =  \frac{1}{12}(c_1(\Sigma)^2 + c_2(\Sigma)) + 20$. Finally, we notice that the Betti number \( b_2 \) for a surface \(\Sigma\) can be decomposed in terms of Hodge numbers as: $b_2(\Sigma) = h^{2,0}(\Sigma) + h^{0,2}(\Sigma) + h^{1,1}(\Sigma)$. As we know the value of $h^{2,0}(\Sigma)$ and $h^{0,2}(\Sigma)$ in (iii), we find that the value of $h^{1,1}(\Sigma)$ is $\frac{1}{6}(5c_2(\Sigma) - c_1(\Sigma)^2) + 42$.
\end{proof}

In this section, we have expressed all the Betti numbers and Hodge numbers as expressions of $c_1(\Sigma)^2$ and $c_2(\Sigma)$. Therefore, it remains to calculate $c_1(\Sigma)^2$ and $c_2(\Sigma)$ as integers. This is done in the next section.

\section{Calculations of $c_1(\Sigma)^2$ and $c_2(\Sigma)$}\label{SectionChernCalculations}

\subsection{Schubert Cycles and Schubert Calculus}\label{SubsectionSchubert}
This section recalls the basic theory of the Schubert calculus which is the main calculation tool for the Chern classes. The presentation here follows closely~\cite{Fulton}.

Let \( \text{Gr}(k, n) \) denote the Grassmannian parametrizing \( k \)-dimensional subspaces of an \( n \)-dimensional vector space \( \mathbb{C}^n \). The Grassmannian has a complex manifold structure. The complex dimension of \( \text{Gr}(k, n) \) is \( k(n-k) \).
\begin{definition} A partition $\lambda=(\lambda_1,...,\lambda_k)$ is a weakly decreasing sequence of non-negative integers. We denote the size of $\lambda$ by $|\lambda|=\sum_{i=1}^k\lambda_i$. 

Two partitions $\lambda$ and $\mu$ are said to be complentary if $\lambda_i+\mu_{k+1-i}=n-k$ for all $i$.
\end{definition}
\begin{definition} A \textit{complete flag} \( F \) in \( \mathbb{C}^n \) is a sequence of nested subspaces:
\[
F_\bullet: \quad \{0\} = F_0 \subset F_1 \subset F_2 \subset \cdots \subset F_n = \mathbb{C}^n,
\]
where \( F_i \) is an \( i \)-dimensional subspace of \( \mathbb{C}^n \) for each \( i \). 
\end{definition}
\begin{definition} For a complete flag $F$ and a partition $\lambda$, the associated Schubert cell is defined as 
$$\Omega_\lambda^\circ(F) = \{V \in \text{Gr}(k, n) : \dim(V \cap F_r)=i,\ \mathrm{for}\ n-k+i-\lambda_i\leq r\leq n-k+i-\lambda_{i+1}, \forall i \}.$$

The associated Schubert variety with respect to $F$ and $\lambda$ is defined as
$$\Omega_\lambda(F) = \{V \in \text{Gr}(k, n) : \dim(V \cap F_{n-k+i-\lambda_i})\geq i,\ \forall i \}.$$
\end{definition}
\begin{definition} The fundamental class $[\Omega_\lambda(F) ]$ of a Schubert variety $\Omega_\lambda(F) $ is called a Schubert class and is denoted by $\sigma_\lambda$. If $\lambda=(\lambda_1, 0,...,0)$, then we say $\sigma_\lambda$ is a special Schubert class.
\end{definition}
The Schubert cells give a cell complex structure on the Grassmannian $\text{Gr}(k, n)$. 
\begin{eg} The Grassmannian $\mathrm{Gr}(2, 4)$ has a cellular decomposition
$$\mathrm{Gr}(2, 4)=\Omega_{(2, 2)}^\circ(F)\cup\Omega_{(2, 1)}^\circ(F)\cup\Omega_{(2, 0)}^\circ(F)\cup\Omega_{(1, 1)}^\circ(F)\cup\Omega_{(1, 0)}^\circ(F)\cup\Omega_\emptyset^\circ(F)$$
where the zero skeleton is the point $\Omega^\circ_{(2, 2)}$.
\end{eg}
By the general theory of cell complex, we have the following proposition.
\begin{proposition} As a $\mathbb{Z}$-module, the cohomology ring $H^*(\text{Gr}(k, n))$ is generated by Schubert classes
$$H^*(\text{Gr}(k, n))=\oplus_\lambda\mathbb{Z}\sigma_\lambda.$$
\end{proposition}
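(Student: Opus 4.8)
The plan is to exploit the cellular structure on $\mathrm{Gr}(k,n)$ furnished by the Schubert cells $\Omega_\lambda^\circ(F)$ together with the elementary fact from algebraic topology that a CW complex whose cells all lie in even (real) dimensions has free cellular homology with exactly one generator per cell. First I would recall that, for a fixed complete flag $F$, each Schubert cell $\Omega_\lambda^\circ(F)$ is isomorphic as a variety to an affine space, explicitly $\Omega_\lambda^\circ(F)\cong \mathbb{C}^{k(n-k)-|\lambda|}$, and that the Grassmannian is the disjoint union of these cells over all partitions $\lambda$ fitting inside the $k\times(n-k)$ rectangle. Thus $\mathrm{Gr}(k,n)$ is a finite CW complex whose cell indexed by $\lambda$ has real dimension $2\big(k(n-k)-|\lambda|\big)$; in particular there are no cells of odd real dimension.

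Next I would write down the cellular chain complex $C_\bullet$. Grouping cells by real dimension, one obtains $C_{2m}=\bigoplus_{|\lambda|=k(n-k)-m}\mathbb{Z}$ and $C_{2m+1}=0$ for every $m$. Since each boundary map $\partial_i\colon C_i\to C_{i-1}$ has either zero source or zero target (the odd-indexed chain groups all vanish), every boundary map is identically zero. Consequently the cellular homology satisfies $H_i(\mathrm{Gr}(k,n),\mathbb{Z})=C_i$, so each $H_{2m}$ is free abelian with a basis indexed by the partitions $\lambda$ of size $k(n-k)-m$, the generators being the fundamental classes of the closed Schubert varieties $\Omega_\lambda(F)$.

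Finally I would pass from homology to cohomology. Because the homology groups are free abelian, the universal coefficient theorem gives $H^*(\mathrm{Gr}(k,n),\mathbb{Z})\cong\mathrm{Hom}\big(H_*(\mathrm{Gr}(k,n),\mathbb{Z}),\mathbb{Z}\big)$ with no $\mathrm{Ext}$ contribution, so the cohomology is again free abelian of the same ranks. Identifying the dual basis with the Schubert classes $\sigma_\lambda=[\Omega_\lambda(F)]$ via Poincaré duality on the smooth compact oriented manifold $\mathrm{Gr}(k,n)$ then yields the claimed decomposition $H^*(\mathrm{Gr}(k,n))=\bigoplus_\lambda \mathbb{Z}\sigma_\lambda$. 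The main obstacle is not any deep computation but rather the careful verification of the two structural inputs: that the Schubert cells genuinely assemble into a CW structure with the stated affine-cell dimensions, and that under Poincaré duality the homology basis furnished by the cells matches the Schubert classes $\sigma_\lambda$ as defined above. Once these identifications are in place, the vanishing of the boundary maps makes the remainder automatic.
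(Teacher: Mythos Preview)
Your argument is correct and is precisely the ``general theory of cell complex'' that the paper invokes in the single sentence preceding the proposition; you have simply spelled out the details (even-dimensional CW structure, vanishing boundary maps, universal coefficients, Poincar\'e duality) that the paper leaves implicit.
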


The Schubert cell \( \Omega_\lambda^\circ(F_\bullet) \) is isomorphic to an affine space of dimension \( \sum_{i=1}^k (n-k+a_i-i) \).

Now we have the following results for the ring $H^*(\text{Gr}(k, n))$.
\begin{proposition}[Poincar\'e duality] Let $\lambda$ and $\mu$ be two partitions contained in an $k\times (n-k)$ rectangle such that $|\lambda|+|\mu|=k(n-k)$, then we have $$\sigma_\lambda\cup\sigma_\mu=\sigma_{\mathrm{pt}}$$ if $\lambda$ and $\mu$ are complementary and 
$$\sigma_\lambda\cup\sigma_\mu=0$$ 
otherwise.
\end{proposition}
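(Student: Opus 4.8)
The plan is to interpret the cup product $\sigma_\lambda \cup \sigma_\mu$ geometrically as an intersection number and to compute it by moving the two Schubert varieties into general position. Since the Schubert class $\sigma_\nu$ lives in $H^{2|\nu|}(\mathrm{Gr}(k,n))$ and $|\lambda| + |\mu| = k(n-k)$, the product $\sigma_\lambda \cup \sigma_\mu$ lies in the top cohomology group $H^{2k(n-k)}(\mathrm{Gr}(k,n))$, which by the cellular description of the previous proposition is the free rank-one module $\mathbb{Z}\sigma_{\mathrm{pt}}$. Hence $\sigma_\lambda \cup \sigma_\mu = c\,\sigma_{\mathrm{pt}}$ for a unique integer $c$, and the whole statement reduces to showing that $c = 1$ when $\lambda$ and $\mu$ are complementary and $c = 0$ otherwise.

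First I would represent the two classes by Schubert varieties attached to two \emph{different} flags chosen in general position, concretely by opposite flags $F_\bullet, \tilde F_\bullet$ with $\dim(F_a \cap \tilde F_b) = \max(0, a+b-n)$. Since $GL_n$ acts transitively on $\mathrm{Gr}(k,n)$ and a generic translate of a Schubert variety satisfies $g\cdot\Omega_\lambda(F) = \Omega_\lambda(gF)$, which is again a Schubert variety for a flag in general position relative to $\tilde F$, Kleiman's transversality theorem guarantees that $\Omega_\lambda(F)\cap\Omega_\mu(\tilde F)$ is a transverse, hence finite and reduced, intersection representing the product. Thus $c$ equals the number of points in this intersection.

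Next I would carry out the point count. Fixing an adapted basis $e_1, \ldots, e_n$ with $F_i = \langle e_1, \ldots, e_i\rangle$ and $\tilde F_j = \langle e_n, \ldots, e_{n-j+1}\rangle$, a point $V$ of the intersection must simultaneously satisfy the incidence inequalities $\dim(V \cap F_{n-k+i-\lambda_i}) \geq i$ and $\dim(V \cap \tilde F_{n-k+i-\mu_i}) \geq i$ for all $i$. Combining these two families of conditions with the general-position formula for $\dim(F_a \cap \tilde F_b)$, one checks that all inequalities can coexist under the constraint $|\lambda|+|\mu| = k(n-k)$ only when the jump indices coming from $\lambda$ and those coming from $\mu$ exactly complement one another, which is precisely the numerical condition $\lambda_i + \mu_{k+1-i} = n-k$. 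In that complementary case the conditions rigidify $V$ to the single coordinate subspace spanned by the vectors $e_{n-k+i-\lambda_i}$, yielding exactly one point; in every non-complementary case the two families of inequalities are mutually inconsistent and the intersection is empty, giving $c = 0$.

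The main obstacle, and the step deserving the most care, is the combinatorial bookkeeping of the two families of dimension inequalities together with the verification that the intersection in the complementary case is genuinely transverse, so that the single point contributes $+1$ rather than a higher multiplicity. I would handle transversality by computing the tangent spaces to the two Schubert varieties at the coordinate point $V$ inside the identification $T_V\mathrm{Gr}(k,n) \cong \mathrm{Hom}(V, \mathbb{C}^n/V)$ and checking that they meet only in zero, equivalently that their codimensions $|\lambda|$ and $|\mu|$ sum to $\dim\mathrm{Gr}(k,n)$ with trivial overlap. This is exactly where the opposite-flag hypothesis is indispensable.
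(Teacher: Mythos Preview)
The paper does not supply its own proof of this proposition: it is stated without argument in the recollection of Schubert calculus, with the presentation attributed to Fulton~\cite{Fulton}. Your proposal is correct and is precisely the standard proof one finds in that reference and elsewhere---represent the two classes by Schubert varieties for opposite flags, invoke transversality to reduce to a point count, and verify by direct inspection that the set-theoretic intersection is a single reduced point in the complementary case and empty otherwise. One small remark: you invoke Kleiman's theorem after having already fixed the specific opposite pair $F_\bullet,\tilde F_\bullet$, whereas Kleiman only gives transversality for a \emph{generic} translate; the cleanest way to close this is either to use Kleiman for a generic flag and then note that opposite flags lie in the good open locus, or---as you in fact do at the end---to verify transversality at the unique intersection point directly via the description of $T_V\Omega_\lambda(F)$ inside $\mathrm{Hom}(V,\mathbb{C}^n/V)$, which makes the appeal to Kleiman unnecessary.
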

\begin{proposition}[Pieri rule] For two partitions $\mu=(\mu_1,0,...,0)$ and $\lambda=(\lambda_1,..,\lambda_k)$, we have
$$\sigma_\mu \cdot \sigma_\lambda = \sum_r \sigma_r,
$$
where the sum is over all partitions $r$ such that $|r|=|\lambda|+|\mu|$ and $\lambda_i\leq r_i\leq\lambda_{i-1}$.
\end{proposition}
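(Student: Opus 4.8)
The plan is to prove the formula by reducing the product to a computation of triple intersection numbers, which can then be carried out by explicit linear algebra in a generic flag configuration. Write $\sigma_\mu \cdot \sigma_\lambda = \sum_r c_r\, \sigma_r$, where by degree reasons the sum runs over partitions $r$ inside the $k \times (n-k)$ rectangle with $|r| = |\lambda| + |\mu|$. By the Poincaré duality proposition just established, pairing with the complementary class isolates each coefficient: $c_r = \int_{\mathrm{Gr}(k,n)} \sigma_\mu \cdot \sigma_\lambda \cdot \sigma_{\hat r}$, where $\hat r$ denotes the partition complementary to $r$, since $\sigma_r \cdot \sigma_{\hat s} = \sigma_{\mathrm{pt}}$ when $s = r$ and vanishes otherwise. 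Thus it suffices to show that this triple product equals $1$ exactly when the interlacing condition $\lambda_i \le r_i \le \lambda_{i-1}$ holds for all $i$, and equals $0$ otherwise.

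To compute the triple product I would represent the three classes by Schubert varieties $\Omega_\mu(E_\bullet)$, $\Omega_\lambda(F_\bullet)$, and $\Omega_{\hat r}(G_\bullet)$ attached to three complete flags $E_\bullet, F_\bullet, G_\bullet$ chosen in general position. Since the codimensions satisfy $|\mu| + |\lambda| + |\hat r| = k(n-k) = \dim \mathrm{Gr}(k,n)$, Kleiman's transversality theorem (a standard general-position result) guarantees that the intersection is transverse, hence a finite reduced set of points, and the intersection number $c_r$ equals its cardinality. The problem is therefore converted into a counting problem: determine the number of $k$-planes $V \subset \mathbb C^n$ satisfying simultaneously the incidence conditions imposed by all three Schubert varieties.

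The heart of the argument is the linear-algebra analysis of these incidence conditions. For the special class $\mu = (\mu_1, 0, \ldots, 0)$ the only nontrivial requirement is $\dim(V \cap E_{n-k+1-\mu_1}) \ge 1$, the higher conditions being automatic; the conditions from $\lambda$ and $\hat r$ are the usual nested rank inequalities against the flags $F_\bullet$ and $G_\bullet$. Choosing coordinates adapted to the three flags, and using that they are in general position so that the relevant subspaces meet as transversally as their dimensions allow, I would run a dimension count showing that a common $V$ exists, and is then unique, precisely when $r$ is obtained from $\lambda$ by adding a horizontal strip of size $\mu_1$ --- that is, exactly when $\lambda_i \le r_i \le \lambda_{i-1}$ for all $i$ (with the convention $\lambda_0 = n-k$). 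When the interlacing fails, the conditions over-determine $V$ and the intersection is empty, giving $c_r = 0$; when it holds, they pin down a unique $V$, giving $c_r = 1$.

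I expect the main obstacle to be the verification of the multiplicity-one statement together with the genericity bookkeeping: it is relatively easy to see set-theoretically that the interlacing partitions are the only ones that can occur, but one must confirm that each contributes with multiplicity exactly one and that no excess or positive-dimensional component survives for a generic choice of flags. This is where Kleiman's theorem does the essential work of guaranteeing reducedness, while the explicit count pinning down $V$ uniquely --- and the careful treatment of the boundary cases of the inequalities $\lambda_i \le r_i \le \lambda_{i-1}$ --- is the delicate part. An alternative, purely combinatorial route would transport the statement to the ring of symmetric functions, identifying $\sigma_\lambda$ with the Schur polynomial $s_\lambda$ and the special class $\sigma_{(\mu_1)}$ with the complete homogeneous symmetric function $h_{\mu_1}$, and then invoke the classical Pieri rule $h_{\mu_1}\, s_\lambda = \sum_r s_r$ for Schur functions; this is cleaner, but it presupposes the dictionary between Schubert classes and Schur functions, which is not developed in the present exposition.
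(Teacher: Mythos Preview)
The paper does not prove this proposition at all: it is stated without proof in Section~\ref{SubsectionSchubert} as part of a brief review of Schubert calculus following~\cite{Fulton}, alongside the Poincar\'e duality statement and the Giambelli formula, all of which are quoted as standard background to be used later as computational tools. So there is no ``paper's own proof'' to compare against.

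That said, your sketch is the standard geometric argument (essentially the one in~\cite{3264}): expand in the Schubert basis, isolate coefficients by pairing with complementary classes via Poincar\'e duality, then realise each coefficient as the number of points in a triple intersection of Schubert varieties defined with respect to three general flags, with Kleiman transversality ensuring this is a reduced zero-dimensional scheme. Your identification of the delicate step --- showing the unique $V$ exists exactly under the horizontal-strip interlacing condition, with multiplicity one --- is accurate; this is where the actual work lies, and your outline is honest that it is only indicated, not carried out. The alternative symmetric-function route you mention is also valid and is closer to what~\cite{Fulton} does. Either would constitute a correct proof; for the purposes of this paper, a citation suffices.
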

\begin{proposition}[Giambelli Formula] Any Schubert class $\sigma_\lambda\in H^*(\text{Gr}(k, n))$ can be expressed as $\sigma_\lambda=\det(\sigma_{\lambda_i+j-i})_{1\leq i, j\leq k}$ where we set $\sigma_p=1$ if $p=0$ and $\sigma_p=0$ if $p<0$ or $p>n-k$.
\end{proposition}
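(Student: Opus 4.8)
The plan is to deduce Giambelli's formula from the Pieri rule, which is already at our disposal. Write $\Delta_\lambda := \det(\sigma_{\lambda_i + j - i})_{1 \le i,j \le k}$ for the right-hand side; the goal is to prove $\Delta_\lambda = \sigma_\lambda$ for every partition $\lambda$ contained in the $k \times (n-k)$ rectangle. Conceptually this is the classical Jacobi--Trudi identity under the dictionary sending the Schubert class $\sigma_\lambda$ to the Schur function $s_\lambda$ and the special class $\sigma_p$ to the complete homogeneous symmetric function $h_p$. I would organize the argument into a uniqueness step, carried out purely inside $H^*(\mathrm{Gr}(k,n))$ using Pieri, followed by a verification step whose content is a universal symmetric-function identity.

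First I would show that the Pieri rule, together with the known values on special classes, determines the Schubert classes uniquely. The key is a triangularity estimate: for $\lambda = (\lambda_1,\dots,\lambda_k)$, set $\hat\lambda = (\lambda_2,\dots,\lambda_k)$ and expand $\sigma_{\lambda_1}\cdot\sigma_{\hat\lambda}$ by Pieri. Any $\nu$ occurring is obtained from $\hat\lambda$ by adding a horizontal $\lambda_1$-strip, so the interlacing condition forces $\nu_i \le \lambda_i$ for $i \ge 2$ while $|\nu| = |\lambda|$; hence $\nu_1 = |\nu| - \sum_{i\ge 2}\nu_i \ge \lambda_1$, with equality precisely when $\nu = \lambda$. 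Therefore
\[
\sigma_{\lambda_1}\cdot\sigma_{\hat\lambda} = \sigma_\lambda + \sum_{\nu_1 > \lambda_1}\sigma_\nu,
\]
which expresses $\sigma_\lambda$ through products of special classes with Schubert classes having either strictly fewer rows or strictly larger first part. A double induction (outer on the number of rows, inner on $\lambda_1$ decreasing from its maximum $n-k$) then writes each $\sigma_\lambda$ as a uniquely determined polynomial in $\sigma_1,\dots,\sigma_{n-k}$ subject to the truncation $\sigma_p = 0$ for $p>n-k$.

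By this uniqueness it suffices to check that the determinants $\Delta_\lambda$ satisfy the same two properties: they agree with $\sigma_\lambda$ on the base cases, and they obey the Pieri recursion. The base cases are immediate: for a one-row partition $\lambda=(p)$ the matrix $(\sigma_{\lambda_i+j-i})$ is upper triangular with diagonal $(\sigma_p,\sigma_0,\dots,\sigma_0) = (\sigma_p,1,\dots,1)$, so $\Delta_{(p)} = \sigma_p$. The main obstacle is the Pieri recursion $\sigma_p\cdot\Delta_\mu = \sum_\nu \Delta_\nu$, summed over horizontal $p$-strips $\nu/\mu$: this is not a formal consequence of anything proved so far but a genuine combinatorial identity, namely the Pieri rule for Jacobi--Trudi determinants, and it is a universal identity in the polynomial ring generated by the $\sigma_p$ (hence valid in the quotient $H^*(\mathrm{Gr}(k,n))$). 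I would establish it via the Lindström--Gessel--Viennot lemma: interpreting $\sigma_a$ as the weighted count of lattice paths of horizontal displacement $a$, the determinant $\Delta_\lambda$ becomes a signed sum over $k$-tuples of paths with prescribed endpoints, a sign-reversing involution cancels every intersecting tuple, and the surviving non-intersecting families biject with semistandard tableaux of shape $\lambda$; for their generating function the Pieri rule is the transparent statement that multiplying by $\sigma_p$ appends a horizontal $p$-strip. Combined with the uniqueness step this yields $\Delta_\lambda = \sigma_\lambda$.

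An alternative, equally valid route for the final step replaces the lattice-path argument by the algebraic identification of the special classes $\sigma_p$ with the Chern classes of the tautological quotient bundle on $\mathrm{Gr}(k,n)$, after which Giambelli's formula follows from the standard determinantal expression of Schur classes in terms of the Chern roots; I would however favor the combinatorial route since it keeps the entire proof self-contained given the Pieri rule, with the only bookkeeping being the conventions $\sigma_0 = 1$ and $\sigma_p = 0$ for $p<0$ or $p>n-k$, both of which are already built into the determinant and compatible with the rectangle constraint on all partitions that arise.
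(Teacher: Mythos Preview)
Your argument is sound, and in fact supplies more than the paper does: the paper states the Giambelli formula without proof, treating it (together with the Pieri rule and Poincar\'e duality) as background Schubert calculus imported from Fulton's \emph{Young Tableaux}. So there is no ``paper's own proof'' to compare against; you have written one where the paper simply quotes the literature.

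On the substance of your proposal: the triangularity computation is correct. Writing $\hat\lambda=(\lambda_2,\dots,\lambda_k)$ and applying Pieri to $\sigma_{\lambda_1}\cdot\sigma_{\hat\lambda}$, every $\nu$ that appears satisfies $\nu_i\le\lambda_i$ for $i\ge 2$ and $|\nu|=|\lambda|$, forcing $\nu_1\ge\lambda_1$ with equality only at $\nu=\lambda$; the double induction (on the number of rows, and within that on $\lambda_1$ descending from $n-k$) then pins $\sigma_\lambda$ down as a polynomial in the special classes. The remaining step---that the Jacobi--Trudi determinants $\Delta_\lambda$ obey the same Pieri recursion---is, as you say, a universal symmetric-function identity, and the Lindstr\"om--Gessel--Viennot interpretation you outline is a standard and correct way to see it. The only point I would flag is that this LGV step is the genuine work and is sketched rather than carried out; if you want the write-up to be fully self-contained you should spell out the path model (sources $(-i,0)$, sinks $(\lambda_i-i,N)$, steps weighted so that a path of horizontal displacement $a$ contributes $\sigma_a$) and the sign-reversing involution. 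Your alternative route via identifying $\sigma_p=c_p(Q)$ and invoking the determinantal formula for Schur polynomials in Chern roots is equally legitimate and arguably shorter in this algebro-geometric context, since the paper already records $c(Q)=1+\sigma_1+\cdots+\sigma_{n-k}$.
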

Using the Pieri rule and the Giambelli formula, we can compute any cup product $\sigma_\lambda\cup\sigma_\mu$. First, by using the Giambelli fomula, we express $\sigma_\lambda$ in terms of special Schubert classes, then we can compute $\sigma_\lambda\cup\sigma_\mu$ by the Pieri rule.


\subsection{Chern classes of $\Sigma$ as Chern classes in the Grassmannian}
In this section, we show that the Chern classes of $\Sigma$ can be viewed as intersections of the Chern classes of the dual of the tautological subbundle $U^\vee$ and those of the tautological quotient bundle $Q$ on $\mathrm{Gr}(3, 7)$. The explicit expression is shown in Proposition~\ref{PropChernClassesAsIntersectionOfChernClasses}.

\begin{proposition}\label{PropChernClassesAsRestriction}

the Chern classes of $\Sigma$ can be expressed as
\[
c_1(\Sigma) = \left( c_1(U^\vee \otimes Q) - c_1(\mathrm{Sym}^3 U^\vee) \right)|_\Sigma,
\]
and
\[
c_2(\Sigma) = \left( c_1(\mathrm{Sym}^3 U^\vee)^2 - c_1(U^\vee \otimes Q) \cdot c_1(\mathrm{Sym}^3 U^\vee) 
+ c_2(U^\vee \otimes Q) - c_2(\mathrm{Sym}^3 U^\vee) \right)|_\Sigma.
\]
\end{proposition}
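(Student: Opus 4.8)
The plan is to read the Chern classes of $\Sigma$ directly off the tangent sequence \eqref{EqExactSequenceForTangent} by means of the Whitney sum formula, the only external input being the standard identification of the tangent bundle of the Grassmannian. Recall that for $\mathrm{Gr}(3,7)$ there is a canonical isomorphism $T_{\mathrm{Gr}(3,7)} \cong \mathrm{Hom}(\mathcal U, Q) = U^\vee \otimes Q$, where $U$ is the tautological subbundle and $Q$ the tautological quotient bundle. Substituting this into \eqref{EqExactSequenceForTangent}, the tangent sequence becomes
\[
0 \to T_\Sigma \to (U^\vee \otimes Q)|_\Sigma \to \mathrm{Sym}^3 U^\vee|_\Sigma \to 0.
\]

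First I would apply the Whitney sum property (ii) of the Chern classes to this sequence, obtaining the relation
\[
c(T_\Sigma)\cdot c(\mathrm{Sym}^3 U^\vee)|_\Sigma = c(U^\vee \otimes Q)|_\Sigma .
\]
Because the Chern classes of a restricted bundle are the restrictions of the Chern classes (the pull-back property (iv)), and because the restriction map $i^*\colon H^*(\mathrm{Gr}(3,7)) \to H^*(\Sigma)$ along $\Sigma \hookrightarrow \mathrm{Gr}(3,7)$ is a ring homomorphism, I may carry out the entire computation inside $H^*(\mathrm{Gr}(3,7))$ and only restrict at the very end; this is exactly the form in which the statement is phrased.

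Next I would invert the total Chern class of $\mathrm{Sym}^3 U^\vee$. Writing $B := \mathrm{Sym}^3 U^\vee$ and $A := U^\vee \otimes Q$, the formal inverse of the total Chern class is the truncated power series $c(B)^{-1} = 1 - c_1(B) + \bigl(c_1(B)^2 - c_2(B)\bigr) + \cdots$, and multiplying $c(A)$ by this series yields $c(T_\Sigma) = \bigl(c(A)\, c(B)^{-1}\bigr)|_\Sigma$. Since $\Sigma$ is a surface, only the components of degree at most $2$ survive, so it suffices to extract the degree-$1$ and degree-$2$ parts of this product. The degree-$1$ part is $c_1(A) - c_1(B)$, which gives the asserted formula for $c_1(\Sigma)$; the degree-$2$ part is $c_2(A) - c_1(A)c_1(B) + c_1(B)^2 - c_2(B)$, which gives the asserted formula for $c_2(\Sigma)$ after substituting back $A = U^\vee \otimes Q$ and $B = \mathrm{Sym}^3 U^\vee$.

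There is essentially no serious obstacle here: once the identification $T_{\mathrm{Gr}(3,7)} \cong U^\vee \otimes Q$ is in hand, the statement is a bookkeeping exercise in the Whitney formula together with inverting a power series truncated at degree $2$. The only point requiring a little care is the order of operations, namely that one is entitled to compute the product $c(A)\,c(B)^{-1}$ in $H^*(\mathrm{Gr}(3,7))$ before restricting to $\Sigma$; this is justified by the functoriality of Chern classes combined with the fact that $i^*$ is a ring homomorphism. The actual evaluation of these classes as integers via Schubert calculus is deferred to Proposition~\ref{PropChernClassesAsIntersectionOfChernClasses} and the computations of Section~\ref{SectionChernCalculations}.
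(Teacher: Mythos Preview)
Your proposal is correct and follows essentially the same route as the paper: both arguments substitute $T_{\mathrm{Gr}(3,7)}\cong U^\vee\otimes Q$ into the tangent/normal sequence and then read off $c_1$ and $c_2$ via the Whitney formula. The only cosmetic difference is that you invert $c(\mathrm{Sym}^3 U^\vee)$ as a truncated power series and extract the graded pieces, whereas the paper solves for $c_1(T_\Sigma)$ and $c_2(T_\Sigma)$ degree by degree; these are the same computation written two ways.
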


\begin{proof}
The normal sequence of $\Sigma\subset \mathrm{Gr}(3, 7)$ gives 
\[
0 \to T_\Sigma \to T_{\mathrm{Gr}(3, 7)|\Sigma} \to N_{\Sigma/\mathrm{Gr}(3, 7)}\to 0.
\]
Since $S\subset \mathrm{Gr}(3, 7)$ is given by the zero locus of a general section of the vector bundle $\mathrm{Sym}^3U^\vee$, we have $N_{\Sigma/\mathrm{Gr|(3, 7)}}\cong \mathrm{Sym}^3U^\vee|\Sigma$. Furthermore, it is well known~\cite[Theorem 3.5]{3264} that the tangent bundle of the Grassmannian $\mathrm{Gr}(k, n)$ is canonically isomorphic to $U^\vee\otimes Q$. Therefore, the normal sequence of $\Sigma\subset \mathrm{Gr}(3, 7)$ becomes  
\[
0 \to T_\Sigma \to (U^\vee \otimes Q)|_\Sigma \to (\mathrm{Sym}^3 U^\vee)|_\Sigma \to 0.
\]
By applying the Whitney sum formula for Chern classes, we have:
\[
c((U^\vee \otimes Q)|_\Sigma) = c(T_\Sigma) \cdot c((\mathrm{Sym}^3 U^\vee)|_\Sigma).
\]
Expanding this product using the definition of the total Chern class
\[
c(E) = 1 + c_1(E) + c_2(E) + \dots,
\]
we obtain the following relations for the first and second Chern classes.
For the first Chern class, we get
\[
c_1((U^\vee \otimes Q)|_\Sigma) = c_1(T_\Sigma) + c_1((\mathrm{Sym}^3 U^\vee)|_\Sigma).
\]
Thus, solving for $c_1(T_\Sigma)$, we have
\[
c_1(T_\Sigma) = c_1((U^\vee \otimes Q)|_\Sigma) - c_1((\mathrm{Sym}^3 U^\vee)|_\Sigma).
\]
At this point, we use the fact that \( T_\Sigma \) is the tangent bundle of \( \Sigma \), which implies that its Chern classes are exactly those of \( \Sigma \). In other words
\[
c(T_\Sigma) = c(\Sigma).
\]
Therefore, we conclude
\[
c_1(\Sigma) = c_1((U^\vee \otimes Q)|_\Sigma) - c_1((\mathrm{Sym}^3 U^\vee)|_\Sigma).
\]
For the second Chern class, the Whitney sum formula gives
\[
c_2((U^\vee \otimes Q)|_\Sigma) = c_2(T_\Sigma) + c_1(T_\Sigma) \cdot c_1((\mathrm{Sym}^3 U^\vee)|_\Sigma) + c_2((\mathrm{Sym}^3 U^\vee)|_\Sigma).
\]
Substituting the expression for $c_1(T_\Sigma)$ from the previous step, we find
\[
c_2(T_\Sigma) = c_2((U^\vee \otimes Q)|_\Sigma) - (c_1((U^\vee \otimes Q)|_\Sigma) - c_1((\mathrm{Sym}^3 U^\vee)|_\Sigma)) \cdot c_1((\mathrm{Sym}^3 U^\vee)|_\Sigma) + c_2((\mathrm{Sym}^3 U^\vee)|_\Sigma).
\]
Since \( c_2(T_\Sigma) = c_2(\Sigma) \), we have
\[
c_2(\Sigma) = \left( c_1(\mathrm{Sym}^3 U^\vee)^2 - c_1(U^\vee \otimes Q) \cdot c_1(\mathrm{Sym}^3 U^\vee)
+ c_2(U^\vee \otimes Q) - c_2(\mathrm{Sym}^3 U^\vee) \right)|_\Sigma.
\]
This completes the proof.
\end{proof}

The following theorem is well-known and its proof can be found in \cite[(5.14) p. 51]{BottTu}.

\begin{theorem} \cite{BottTu} 
Let $X$ be a differential manifold of real dimension $N$. Let $Y \subset X$ be a closed submanifold of real codimension $c$. Let $\omega \in H^{N-c}(X, \mathbb{Z})$. Then
\[
\int_Y \omega|_Y = \int_X \omega \cdot [Y].
\]
\end{theorem}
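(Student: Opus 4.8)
The plan is to represent the fundamental class $[Y]$ by an explicit closed form concentrated near $Y$ — the Thom class of the normal bundle — and then to reduce the right-hand integral to a fiber-integration computation. I work with de Rham representatives, which determine both integrals since their values depend only on the real images of the integral classes, under the standing hypotheses that $X$ is closed and oriented, so that $Y$ is oriented by the normal-bundle convention and all integrals are defined. First I would invoke the tubular neighborhood theorem to identify an open neighborhood $T$ of $Y$ in $X$ with the total space of the normal bundle $\nu = N_{Y/X}$, writing $\pi\colon T\to Y$ for the bundle projection, $i\colon Y\hookrightarrow X$ for the inclusion, and $j\colon T\hookrightarrow X$ for the open inclusion.

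The key object is the Thom class $\Phi$ of $\nu$: a closed $c$-form on $T$ with compact support in the fiber direction, normalized so that fiber integration satisfies $\pi_*\Phi = 1 \in H^0(Y)$. Since $\Phi$ has compact vertical support, extending it by zero produces a smooth closed $c$-form $\tilde\Phi$ on all of $X$, and the basic structural input from the theory of the Thom isomorphism (Bott-Tu) is that $[\tilde\Phi]$ is exactly the Poincaré dual $[Y]\in H^c(X)$.

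Representing $[Y]$ by $\tilde\Phi$ and using that $\tilde\Phi$ vanishes outside $T$,
\[
\int_X \omega\cdot[Y] \;=\; \int_T (j^*\omega)\wedge\Phi .
\]
Because $\pi$ exhibits $Y$ as a deformation retract of $T$, the form $j^*\omega$ is cohomologous to $\pi^*(\omega|_Y)$, and since $\Phi$ is closed this replacement leaves the integral of the compactly supported top-degree integrand unchanged. The projection formula for fiber integration then gives
\[
\int_T \pi^*(\omega|_Y)\wedge\Phi \;=\; \int_Y (\omega|_Y)\wedge\pi_*\Phi \;=\; \int_Y \omega|_Y,
\]
using $\pi_*\Phi=1$ and the compatibility $\int_T = \int_Y\circ\,\pi_*$. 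Chaining the two displays proves the identity.

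The main obstacle is the second step: rigorously producing the Thom class and verifying that its zero-extension represents $[Y]$, together with the projection formula $\pi_*(\pi^*\alpha\wedge\Phi)=\alpha\wedge\pi_*\Phi$ invoked in the third step. These are precisely the content of the Thom isomorphism and of the calculus of forms with compact support in the fiber direction; I would cite Bott-Tu for them, or else construct $\Phi$ locally over contractible pieces of $Y$ — where $\nu$ trivializes and $\Phi$ is an explicit bump form of total fiber integral $1$ — and patch with a partition of unity, checking that the resulting global class is independent of the choices. Once these inputs are granted, the remaining manipulations are formal, resting only on the homotopy invariance of integrals and the fact that integrating a closed form against a form supported in $T$ localizes the computation to $T$.
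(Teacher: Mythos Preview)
The paper does not prove this theorem at all; it merely states the result and refers to \cite[(5.14) p.~51]{BottTu} for the proof. Your proposal is a correct outline of precisely that standard argument---representing $[Y]$ by the Thom class of the normal bundle via a tubular neighborhood and then applying the projection formula for fiber integration---so there is nothing to compare: you have supplied what the paper deliberately omitted.
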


\begin{theorem}
Let \( Z \subset X \) be the zero locus of a vector bundle \( E \) of rank \( r \). Then:
\[
[Z] = c_r(E) \in H^{2r}(X, \mathbb{Z}),
\]
where \( [Z] \) is the cohomology class represented by the zero locus \( Z \).
\end{theorem}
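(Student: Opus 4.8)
The plan is to interpret the statement in its transverse form: $Z = s^{-1}(0)$ is the zero locus of a section $s$ of $E$ that is transverse to the zero section (equivalently, a sufficiently general section, which exists for the globally generated bundles of interest), so that $Z$ is a smooth submanifold of real codimension $2r$, and the assertion becomes $[Z] = c_r(E)$ in $H^{2r}(X,\mathbb Z)$. I would prove this by induction on the rank $r$, reducing the general case to bundles carrying a full flag of subbundles by means of the splitting principle. Concretely, let $\pi: \mathrm{Fl}(E) \to X$ be the associated flag bundle; $\pi$ is a proper submersion, so $\pi^*$ is injective on cohomology, $\pi^* c_r(E) = c_r(\pi^* E)$ by the pull-back axiom, and $\pi^{-1}(Z) = (\pi^* s)^{-1}(0)$ satisfies $[\pi^{-1}(Z)] = \pi^*[Z]$. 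Hence it suffices to prove the identity after pulling back, where $\pi^* E$ acquires a filtration $0 = E_0 \subset E_1 \subset \cdots \subset E_r = \pi^* E$ with line-bundle quotients $L_i = E_i/E_{i-1}$. Repeated use of the Whitney-sum axiom on the sequences $0 \to E_{i-1} \to E_i \to L_i \to 0$ gives $c_r(\pi^* E) = c_1(L_1) \cdots c_1(L_r)$.

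For the base case $r = 1$ the bundle is a line bundle and $Z$ is the divisor cut out by $s$, so $[Z] = c_1(E)$ is exactly the normalization axiom. For the inductive step I would compose $s$ with the projection $E \to E/E_{r-1} = L_r$ to obtain a section $\bar s$ of the line bundle $L_r$; choosing $s$ general makes $\bar s$ transverse, so its zero divisor $D_r = \bar s^{-1}(0)$ is smooth with $[D_r] = c_1(L_r)$. On $D_r$ the section $s$ takes values in the subbundle $E_{r-1}$, hence restricts to a section of the rank-$(r-1)$ bundle $E_{r-1}|_{D_r}$ whose zero locus is precisely $Z$ (note $Z \subset D_r$ automatically, since $s = 0$ forces $\bar s = 0$). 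By the inductive hypothesis applied on $D_r$, $[Z]_{D_r} = c_{r-1}(E_{r-1}|_{D_r}) = (c_1(L_1) \cdots c_1(L_{r-1}))|_{D_r}$.

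Finally I would transport this from $D_r$ to $X$ using the integration formula stated above, which is equivalent, via Poincaré duality, to the projection formula $j_*(j^*\alpha) = \alpha \cup [D_r]$ for the inclusion $j: D_r \hookrightarrow X$. With $\alpha = c_1(L_1) \cdots c_1(L_{r-1})$ and $[Z]_X = j_*[Z]_{D_r}$ this yields $[Z]_X = (c_1(L_1)\cdots c_1(L_{r-1})) \cdot c_1(L_r) = c_r(\pi^* E)$, and injectivity of $\pi^*$ then gives $[Z] = c_r(E)$ on $X$. The hard part will be the transversality bookkeeping: one must verify that a single general $s$ simultaneously makes $\bar s$ transverse on $X$ and $s|_{D_r}$ transverse on $D_r$ at every stage, and that the fundamental class of $Z$ is compatible with both the restriction to the smooth divisor $D_r$ and the Gysin push-forward $j_*$. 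A conceptually cleaner but less self-contained alternative would identify $c_r(E)$ with the Euler class and $[Z]$ with the pull-back of the Thom class along $s$, using that $s$ is homotopic to the zero section; I would avoid this route, since it requires the Thom isomorphism, which lies outside the axiomatic toolkit developed above.
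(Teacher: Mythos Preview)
The paper does not supply its own proof of this statement; it is recorded as a classical fact and used as a black box, much like the preceding integration formula attributed to \cite{BottTu}. There is therefore no ``paper's own proof'' to compare your approach against.

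Your splitting-principle-plus-induction argument is a correct and standard route. The only delicate point is the one you yourself flag: after pulling back to $\mathrm{Fl}(E)$, one must ensure that the successive projections of $\pi^*s$ to the line bundles $L_i$ and its restrictions to the intermediate zero loci are all transverse. Your assertion that ``choosing $s$ general'' arranges this needs justification, since genericity of $s$ in $H^0(X,E)$ is a condition on $X$ while the filtration exists only on $\mathrm{Fl}(E)$; note moreover that $H^0(\mathrm{Fl}(E),\pi^*E)\cong H^0(X,E)$ (the fibres of $\pi$ being flag varieties), so one cannot simply pass to a more general holomorphic section upstairs. The clean fix is to allow $C^\infty$ perturbations: the finitely many required transversality conditions are each generic among smooth sections of $\pi^*E$, hence can be achieved simultaneously by a small perturbation of $\pi^*s$, and the cohomology class of the zero locus is unchanged under such a perturbation. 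With that made explicit, your induction goes through. Incidentally, the Euler/Thom-class argument you set aside is exactly how \cite{BottTu} treats this identity, and since the paper already relies on that reference for the companion integration formula, invoking it here would be entirely consistent with the level of rigour of the surrounding exposition.
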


\begin{proposition}\label{PropChernClassesAsIntersectionOfChernClasses}
Viewing $c_1(\Sigma)^2$ and $c_2(\Sigma)$ as integers, we have
\[
c_1(\Sigma)^2 = \left( c_1(U^\vee \otimes Q) - c_1(\mathrm{Sym}^3 U^\vee) \right)^2 \cdot c_{10}(\mathrm{Sym}^3 U^\vee),
\]
and
\[
c_2(\Sigma) = \left( c_1(\mathrm{Sym}^3 U^\vee)^2 - c_1(U^\vee \otimes Q) \cdot c_1(\mathrm{Sym}^3 U^\vee) + c_2(U^\vee \otimes Q) - c_2(\mathrm{Sym}^3 U^\vee) \right) \cdot c_{10}(\mathrm{Sym}^3 U^\vee).
\]
\end{proposition}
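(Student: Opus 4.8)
The plan is to reduce the computation of the integers $c_1(\Sigma)^2$ and $c_2(\Sigma)$ to an intersection-theoretic computation taking place entirely on the ambient Grassmannian $\mathrm{Gr}(3,7)$, where Schubert calculus can then be applied in the next section. The two essential inputs are Proposition~\ref{PropChernClassesAsRestriction}, which expresses the Chern classes of $\Sigma$ as restrictions of explicit classes on $\mathrm{Gr}(3,7)$, and the two theorems recalled just above: the projection formula $\int_\Sigma \omega|_\Sigma = \int_{\mathrm{Gr}(3,7)} \omega \cdot [\Sigma]$ for a class $\omega$ of complementary degree, together with the identity expressing the fundamental class of the zero locus of a vector bundle as its top Chern class.

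First I would recall that $\Sigma \subset \mathrm{Gr}(3,7)$ is the zero locus of a general section of the rank-$10$ bundle $\mathrm{Sym}^3 U^\vee$, and that by Proposition~\ref{PropGloballyGeneratedVBHasGoodProp} this zero locus is smooth of the expected codimension $10$, hence of complex dimension $2$. Consequently the hypotheses of the zero-locus formula are satisfied, and one has $[\Sigma] = c_{10}(\mathrm{Sym}^3 U^\vee) \in H^{20}(\mathrm{Gr}(3,7), \mathbb{Z})$.

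Next, since the restriction map $H^*(\mathrm{Gr}(3,7)) \to H^*(\Sigma)$ is a ring homomorphism, squaring the first identity of Proposition~\ref{PropChernClassesAsRestriction} yields
\[
c_1(\Sigma)^2 = \left[ \left( c_1(U^\vee \otimes Q) - c_1(\mathrm{Sym}^3 U^\vee) \right)^2 \right]\Big|_\Sigma,
\]
while the second identity of that proposition already presents $c_2(\Sigma)$ as the restriction of a degree-$4$ class $\alpha$ on $\mathrm{Gr}(3,7)$. Taking $\omega$ to be the class being restricted in each case (real degree $4$, which is exactly complementary to the real codimension $20$ of $\Sigma$) and applying the projection formula, I obtain
\[
\int_\Sigma c_1(\Sigma)^2 = \int_{\mathrm{Gr}(3,7)} \left( c_1(U^\vee \otimes Q) - c_1(\mathrm{Sym}^3 U^\vee) \right)^2 \cdot [\Sigma],
\]
and likewise $\int_\Sigma c_2(\Sigma) = \int_{\mathrm{Gr}(3,7)} \alpha \cdot [\Sigma]$. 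Substituting $[\Sigma] = c_{10}(\mathrm{Sym}^3 U^\vee)$ into both gives precisely the two asserted formulas, once I note that $\int_\Sigma$ and $\int_{\mathrm{Gr}(3,7)}$ are the identifications of the respective top cohomology groups with $\mathbb{Z}$, so that the phrase ``viewing as integers'' matches the left-hand sides verbatim.

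The only points requiring care — and the closest thing to an obstacle — are the degree bookkeeping and the fact that restriction commutes with cup product. I would verify that each factor lands in the correct degree: both $\left( c_1(U^\vee \otimes Q) - c_1(\mathrm{Sym}^3 U^\vee) \right)^2$ and $\alpha$ lie in $H^4(\mathrm{Gr}(3,7))$, the class $c_{10}(\mathrm{Sym}^3 U^\vee)$ lies in $H^{20}$, and $\mathrm{Gr}(3,7)$ has real dimension $24$, so each product lies in the top group $H^{24}$ and integrates to a well-defined integer. Beyond this, the statement is a formal consequence of the ring-homomorphism property of restriction and the two cited theorems; no genuinely new geometric input is needed past the smoothness and expected dimension of $\Sigma$ already established.
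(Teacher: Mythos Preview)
Your proposal is correct and follows essentially the same approach as the paper: apply Proposition~\ref{PropChernClassesAsRestriction} to write $c_1(\Sigma)^2$ and $c_2(\Sigma)$ as restrictions of classes on $\mathrm{Gr}(3,7)$, use the projection formula $\int_\Sigma \omega|_\Sigma = \int_{\mathrm{Gr}(3,7)} \omega \cdot [\Sigma]$, and substitute $[\Sigma] = c_{10}(\mathrm{Sym}^3 U^\vee)$. Your additional remarks on degree bookkeeping and the ring-homomorphism property of restriction are helpful clarifications but do not depart from the paper's argument.
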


\begin{proof}
By the Theorem 4.11, we have:
\[
\int_\Sigma c_1(\Sigma)^2|_\Sigma = \int_{\mathrm{Gr}(3,7)} c_1(\Sigma)^2 \cdot [\Sigma].
\]
From the Theorem 4.12, since the rank of $\mathrm{Sym}^3 U^\vee$ equals to 10, we have $[\Sigma] = c_{10}(\mathrm{Sym}^3 U^\vee)$.
Thus, we can substitute into the integral:
\[
\int_\Sigma c_1(\Sigma)^2|_\Sigma = \int_{\mathrm{Gr}(3,7)} \left( c_1(U^\vee \otimes Q) - c_1(\mathrm{Sym}^3 U^\vee) \right)^2 \cdot c_{10}(\mathrm{Sym}^3 U^\vee).
\]
This directly proves that:
\[
c_1(\Sigma)^2 = \left( c_1(U^\vee \otimes Q) - c_1(\mathrm{Sym}^3 U^\vee) \right)^2 \cdot c_{10}(\mathrm{Sym}^3 U^\vee).
\]
Similarly, for \( c_2(\Sigma) \), we apply the Theorem 4.11
\[
\int_\Sigma c_2(\Sigma)|_\Sigma = \int_{\mathrm{Gr}(3,7)} c_2(\Sigma) \cdot [\Sigma].
\]
Then we have 
$$\int_\Sigma c_2(\Sigma)|_\Sigma = \int_{\mathrm{Gr}(3,7)} \left( c_1(\mathrm{Sym}^3 U^\vee)^2 - c_1(U^\vee \otimes Q) \cdot c_1(\mathrm{Sym}^3 U^\vee) + c_2(U^\vee \otimes Q) - c_2(\mathrm{Sym}^3 U^\vee) \right) \cdot c_{10}(\mathrm{Sym}^3 U^\vee).$$
This proves that
\[
c_2(\Sigma) = \left( c_1(\mathrm{Sym}^3 U^\vee)^2 - c_1(U^\vee \otimes Q) \cdot c_1(\mathrm{Sym}^3 U^\vee) + c_2(U^\vee \otimes Q) - c_2(\mathrm{Sym}^3 U^\vee) \right) \cdot c_{10}(\mathrm{Sym}^3 U^\vee).
\]
\end{proof}

\subsection{Chern classes as Schubert classes on the Grassmannian}

Since the Schubert classes $\sigma_\lambda$ form a basis for $H^*(Gr, \mathbb Z)$, we are able to represent the Chern classes of $U^\vee$ and $Q$ as Schubert classes. In fact, it is well-known~\cite[Section 5.6.2]{3264} that 
\[
c(U^\vee) = 1 + \sigma_1 + \sigma_{1, 1} + \ldots + \sigma_{1, 1, \ldots, 1},
\] 
and that
\[
c(Q) = 1 + \sigma_1 + \sigma_2 + \ldots + \sigma_{n - k}.
\]

\begin{proposition}\label{PropChernClassOfTensor}
We have $c_1(U^\vee\otimes Q) = 7\sigma_1$ and $c_2(U^\vee\otimes Q) = 24\sigma_{11} + 23\sigma_2$.
\end{proposition}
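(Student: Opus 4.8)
The plan is to pass to formal Chern roots via the splitting principle, carry out an elementary symmetric-function computation, and convert back to Schubert classes only at the very end using the Pieri rule. First I would introduce Chern roots $a_1, a_2, a_3$ for $U^\vee$ and $b_1, b_2, b_3, b_4$ for $Q$, so that the elementary symmetric polynomials recover the Chern classes already recorded above: $e_1(a) = c_1(U^\vee) = \sigma_1$ and $e_2(a) = c_2(U^\vee) = \sigma_{1,1}$, while $e_1(b) = c_1(Q) = \sigma_1$ and $e_2(b) = c_2(Q) = \sigma_2$. The key input is that the Chern roots of the tensor product $U^\vee \otimes Q$ are exactly the $3 \times 4 = 12$ sums $a_i + b_j$; this reduces the entire problem to evaluating the first two elementary symmetric functions of these twelve quantities and then rewriting the answer in the Schubert basis.

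For $c_1$, summing the roots gives $\sum_{i,j}(a_i + b_j)$, where each $a_i$ is counted $4 = \mathrm{rk}(Q)$ times and each $b_j$ is counted $3 = \mathrm{rk}(U^\vee)$ times; hence $c_1(U^\vee \otimes Q) = 4\,c_1(U^\vee) + 3\,c_1(Q) = 4\sigma_1 + 3\sigma_1 = 7\sigma_1$. For $c_2$ I would apply the Newton-type identity $e_2 = \tfrac{1}{2}(e_1^2 - p_2)$ to the twelve roots, where $p_2 = \sum_{i,j}(a_i + b_j)^2$. Expanding the square and collecting multiplicities yields $p_2 = 4\sum_i a_i^2 + 2\bigl(\sum_i a_i\bigr)\bigl(\sum_j b_j\bigr) + 3\sum_j b_j^2$, and each power sum is rewritten through $\sum x^2 = (\sum x)^2 - 2e_2$ in terms of the Chern classes already named. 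Substituting $e_1 = 7\sigma_1$ then expresses $c_2(U^\vee \otimes Q)$ as an integer combination of $\sigma_1^2$, $\sigma_{1,1}$, and $\sigma_2$.

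The final step is to reduce this combination to the asserted basis. By the Pieri rule (equivalently Giambelli), one has the relation $\sigma_1 \cdot \sigma_1 = \sigma_2 + \sigma_{1,1}$ in $H^*(\mathrm{Gr}(3,7))$, so every occurrence of $\sigma_1^2$ is replaced and collecting coefficients produces $c_2(U^\vee \otimes Q) = 24\sigma_{1,1} + 23\sigma_2$. I do not expect a serious obstacle: the argument is entirely formal once the splitting principle is invoked, and the only points demanding genuine care are the combinatorial multiplicities $4$ and $3$ arising from the two ranks in the tensor product, together with the single relation $\sigma_1^2 = \sigma_2 + \sigma_{1,1}$ needed to land in the Schubert basis. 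As a consistency check, recall that $U^\vee \otimes Q \cong T_{\mathrm{Gr}(3,7)}$, so this computation simultaneously records the low-degree Chern classes of the Grassmannian itself.
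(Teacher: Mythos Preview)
Your proposal is correct. The paper's own proof proceeds via the multiplicativity of the Chern character, $\mathrm{ch}(U^\vee\otimes Q)=\mathrm{ch}(U^\vee)\cdot\mathrm{ch}(Q)$ (Proposition~\ref{PropChernChar}), reading off the degree~$1$ and degree~$2$ components and then solving $\mathrm{ch}_2=\tfrac12(c_1^2-2c_2)$ for $c_2$; you instead invoke the splitting principle directly, writing down the twelve Chern roots $a_i+b_j$ and computing $e_1$ and $e_2$ via Newton's identity $e_2=\tfrac12(e_1^2-p_2)$. The two routes are essentially reformulations of one another---the Chern character identity is itself proved through Chern roots---and both land on the same intermediate expression $20\sigma_1^2+4\sigma_{1,1}+3\sigma_2$ before the Pieri relation $\sigma_1^2=\sigma_2+\sigma_{1,1}$ is applied. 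Your version has the advantage of being self-contained (no appeal to the Chern character formalism) and of making the rank multiplicities $3$ and $4$ visibly responsible for each coefficient; the paper's version is a bit quicker once Proposition~\ref{PropChernChar} is in hand.
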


\begin{proof}
Using Proposition~\ref{PropChernChar}, we get
\[ 
\mathrm{ch}(U^\vee \otimes Q)= \left( 4 + \sigma_1 + \frac{1}{2} \left( \sigma_{11} + \sigma_{20} - 2\sigma_2 + \cdots \right) \right)
\left( 3 + \sigma_1 + \frac{1}{2} \left( \sigma_{11} + \sigma_{20} + 2\sigma_{11} + \cdots \right) \right).
\]
By collecting the degree 1 factors, we have
\[
c_1(U^\vee \otimes Q) = 4\sigma_1 + 3\sigma_1 = 7\sigma_1.
\]
By collecting the degree 2 factors, we have
\[
\frac{1}{2} c_1(U^\vee \otimes Q)^2 - 2c_2(U^\vee \otimes Q) = \frac{1}{2} (\sigma_2 - \sigma_{11}) + 2\sigma_1^2.
\]
Simplifying the equation, we get
\[
c_2(U^\vee \otimes Q) = 24\sigma_{11} + 23\sigma_2.
\]
\end{proof}

\begin{proposition}\label{PropChernClassOfSym}
We have $c_1(\mathrm{Sym}^3 U^\vee) = 10 \sigma_1$, 
$c_2(\mathrm{Sym}^3 U^\vee) = 40 \sigma_1^2 + 15 \sigma_{11}$, and 
$c_{10}(\mathrm{Sym}^3 U^\vee) = 216 \sigma_1^3 \sigma_{11}^2 \sigma_{111} + 108 \sigma_1 \sigma_{11}^3 \sigma_{111} + 108 \sigma_1^4 \sigma_{111}^2 - 486 \sigma_1^2 \sigma_{11} \sigma_{111}^2 + 729 \sigma_1 \sigma_{111}^3$.
\end{proposition}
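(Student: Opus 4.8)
The plan is to apply the splitting principle. Write $U^\vee$ formally as a direct sum of line bundles with Chern roots $a_1, a_2, a_3$, so that $c(U^\vee) = \prod_{i=1}^3 (1 + a_i)$. Comparing with the expression $c(U^\vee) = 1 + \sigma_1 + \sigma_{11} + \sigma_{111}$ recalled just above, the elementary symmetric polynomials of the roots are identified with Schubert classes via $e_1 = \sigma_1$, $e_2 = \sigma_{11}$, $e_3 = \sigma_{111}$. The Chern roots of $\mathrm{Sym}^3 U^\vee$ are then the ten linear forms $i a_1 + j a_2 + k a_3$ ranging over all triples $(i,j,k)$ of nonnegative integers with $i + j + k = 3$; there are $\binom{5}{2} = 10$ of them, matching the rank. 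Consequently each Chern class $c_m(\mathrm{Sym}^3 U^\vee)$ is the $m$-th elementary symmetric polynomial in these ten roots, hence a symmetric polynomial in $a_1, a_2, a_3$, which by the fundamental theorem of symmetric functions can be rewritten uniquely in terms of $e_1, e_2, e_3$, and thus in terms of $\sigma_1, \sigma_{11}, \sigma_{111}$.

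For $c_1$ I would simply sum the ten roots: the coefficient of each $a_\ell$ is $\sum_{i+j+k=3} i = 10$, so $c_1 = 10(a_1 + a_2 + a_3) = 10\,\sigma_1$. For $c_2$ I would use the identity $c_2 = \tfrac{1}{2}(c_1^2 - p_2)$, where $p_2 = \sum_{\mathrm{roots}} r^2$ is the second power sum. A direct count of the coefficients gives $p_2 = 20(a_1^2 + a_2^2 + a_3^2) + 10(a_1 a_2 + a_1 a_3 + a_2 a_3) = 20\,e_1^2 - 30\,e_2$, whence $c_2 = \tfrac{1}{2}\bigl(100\,e_1^2 - (20\,e_1^2 - 30\,e_2)\bigr) = 40\,e_1^2 + 15\,e_2 = 40\,\sigma_1^2 + 15\,\sigma_{11}$, as claimed.

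The remaining class $c_{10}$ is the top elementary symmetric polynomial in the roots, i.e. their product $\prod_{i+j+k=3}(i a_1 + j a_2 + k a_3)$. To keep the bookkeeping manageable I would isolate the three ``pure'' roots $3a_1, 3a_2, 3a_3$ together with the ``balanced'' root $a_1 + a_2 + a_3$, factoring the product as $27\,e_1 e_3 \cdot R$, where $R = \prod_{i \ne j}(2 a_i + a_j)$ is the product of the six remaining mixed roots. It then remains to expand the degree-$6$ symmetric polynomial $R$ in the elementary symmetric basis; substituting back and distributing the factor $27\,e_1 e_3$ produces the stated five-term expression (note that every term carries a factor of $e_3 = \sigma_{111}$, reflecting the visible factor $a_1 a_2 a_3$ in the product).

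This degree-$10$ symmetric-function expansion is the only genuinely laborious step and is the main obstacle. I expect it is most safely carried out via Newton's identities, passing through the power sums $p_1, \dots, p_{10}$ and then converting to the elementary symmetric basis, or with the aid of a symmetric-function computation to control the many cross terms. Once the identity $\prod(\mathrm{roots}) = 216\,e_1^3 e_2^2 e_3 + 108\,e_1 e_2^3 e_3 + 108\,e_1^4 e_3^2 - 486\,e_1^2 e_2 e_3^2 + 729\,e_1 e_3^3$ is established in the polynomial ring of symmetric functions, the substitution $e_i \mapsto \sigma_{1^i}$ transfers it to an identity in $H^*(\mathrm{Gr}(3,7), \mathbb{Z})$ by the splitting principle, completing the proof.
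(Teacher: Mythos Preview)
Your proposal is correct and follows essentially the same route as the paper: both arguments invoke the splitting principle, write the Chern classes of $\mathrm{Sym}^3 U^\vee$ as symmetric polynomials in the Chern roots of $U^\vee$, and then re-express these in the elementary symmetric basis $e_i = \sigma_{1^i}$. The only cosmetic differences are that the paper computes $c_2$ by directly counting the coefficients of $l_1^2$ and $l_1 l_2$ rather than via the power-sum identity you use, and for $c_{10}$ it feeds the full product straight into Mathematica's \texttt{SymmetricReduction} rather than first stripping off the factor $27\,e_1 e_3$ as you do.
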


\begin{proof}
Assume the virtual roots of \(U^\vee\) are \(l_1, l_2, l_3\). The virtual roots of \(\mathrm{Sym}^3 U^\vee\) are given by \(l_{i_1} + l_{i_2} + l_{i_3}\), where the indices satisfy \(1 \leq i_1 \leq i_2 \leq i_3 \leq 3\).
The number of terms is
\[
\binom{3+2}{3} = \binom{5}{3} = 10.
\]
Thus, the total Chern class of \(\mathrm{Sym}^3 U^\vee\) is
\[
c(\mathrm{Sym}^3 U^\vee) = \prod_{1 \leq i_1 \leq i_2 \leq i_3 \leq 3} (1 + l_{i_1} + l_{i_2} + l_{i_3}),
\]
leading to the expansion into individual Chern classes
\[
c(\mathrm{Sym}^3 U^\vee) = 1 + c_1(\mathrm{Sym}^3 U^\vee) + c_2(\mathrm{Sym}^3 U^\vee) + \cdots + c_{10}(\mathrm{Sym}^3 U^\vee).
\]

(i) The first Chern class is sum of the symmetric polynomials of degree 1
\[
c_1(\mathrm{Sym}^3 U^\vee) = \sum_{1 \leq i_1 \leq i_2 \leq i_3 \leq 3} (1 + l_{i_1} + l_{i_2} + l_{i_3}).
\]
This simplifies to
\[
c_1(\mathrm{Sym}^3 U^\vee) = 10 (l_1 + l_2 + l_3).
\]
Since \(c_1(U^\vee) = l_1 + l_2 + l_3 = \sigma_1\), we have
\[
c_1(\mathrm{Sym}^3 U^\vee) = 10 \sigma_1.
\]

(ii) The second Chern class is sum of the symmetric polynomials of degree 2 in \(l_1, l_2, l_3\), which takes the form
\[
c_2(\mathrm{Sym}^3 U^\vee) = a \cdot (l_1 + l_2 + l_3)^2 + b \cdot (l_1 l_2 + l_2 l_3 + l_3 l_1),
\]
where \(a\) and \(b\) are constants. This simplifies to
\[
c_2(\mathrm{Sym}^3 U^\vee) = a \cdot \sigma_1^2 + b \cdot \sigma_{11}.
\]
The coefficients $a$ and $b$ can be determined as follows. The coefficient $a$ is the term of $l_1^2$. Among the $10$ terms $l_{i_1} + l_{i_2} + l_{i_3}$ in the product, there are $4$ terms with no $l_1$, $3$ terms with $1$ $l_1$, $2$ terms with $2$ $l_1$’s and $1$ term with $3$ $l_1$’s. Hence, the coefficient of $l_1$ in the expansion is $\binom{3}{2}\times 1\times 1 + \binom{2}{2}\times 2\times 2  + 3\times 2\times 1\times 2 + 3\times 1\times 1\times 3 + 2\times 1\times 2\times 3 = 40$. Therefore, $a = 40$. By similar counting methods, the coefficient of the term $l_1l_2$ in the expansion is $95$. Hence, $b = 95 - 2a = 15$. Therefore, we get
\[
c_2(\mathrm{Sym}^3 U^\vee) = 40 \sigma_1^2 + 15 \sigma_{11}.
\]
(iii)
The degree $10$ part of the expansion of $\prod_{1 \leq i_1 \leq i_2 \leq i_3 \leq 3} (1 + l_{i_1} + l_{i_2} + l_{i_3})$ is simply 
\[
\prod_{1\leq i_1\leq i_2\leq i_3\leq 3}(l_{i_1} + l_{i_2} + l_{i_3}).
\]
To express $c_{10}(\mathrm{Sym}^3U^\vee)$ as Chern classes of $U^\vee$, it suffices to express the symmetric polynomial $\prod_{1\leq i_1\leq i_2\leq i_3\leq 3}(l_{i_1} + l_{i_2} + l_{i_3})$ as elementary symmetric polynomials. We do so using Mathematica with the following code
\begin{verbatim}
    eu = (3 l1) (3 l2) (3 l3) (2 l1 + l2) (2 l1 + l3) (2 l2 + l3) (2 l2 + 
     l1) (2 l3 + l1) (2 l3 + l2) (l1 + l2 + l3);
    SymmetricReduction[eu, Variables[eu], {c1, c2, c3}]
\end{verbatim}
The output is
\begin{verbatim}
    {216 c1^3 c2^2 c3 + 108 c1 c2^3 c3 + 108 c1^4 c3^2 - 486 c1^2 c2 c3^2 + 
    729 c1 c3^3, 0}
\end{verbatim}
This terminates the calculation.
\end{proof}

\subsection{Explicit Schubert calculus on $\mathrm{Gr}(3, 7)$}
\begin{lemma}\label{LmmExplicitSchubertCalculus}
We have the following results:

(i) $\sigma_1^6 \cdot \sigma_{111}^2 = 5$;

(ii) $\sigma_1^5 \cdot \sigma_{11}^2 \cdot \sigma_{111} = 11$;

(iii) $\sigma_1^3 \cdot \sigma_{111} \cdot \sigma_{11}^3 = 6$;

(iv) $\sigma_1^4 \cdot \sigma_{11} \cdot \sigma_{111}^2 = 3$;

(v) $\sigma_1^3 \cdot \sigma_{111}^3 = 1$;

(vi) $\sigma_1 \cdot \sigma_{11}^4 \cdot \sigma_{111} = 3$;

(vii) $\sigma_1^2 \cdot \sigma_{11}^2 \cdot \sigma_{111}^2 = 2$;

(viii) $\sigma_1 \cdot \sigma_{11} \cdot \sigma_{111}^3 = 1$.
\end{lemma}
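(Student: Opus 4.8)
The plan is to compute all eight numbers mechanically from the three tools assembled in Section~\ref{SubsectionSchubert}: the Pieri rule, the Giambelli formula, and Poincaré duality. The first observation is that each product in (i)--(viii) has total degree $12 = \dim_{\mathbb C}\mathrm{Gr}(3,7)$, so it lands in the top cohomology group $H^{24}(\mathrm{Gr}(3,7),\mathbb Z) = \mathbb Z\,\sigma_{(4,4,4)}$, where $\sigma_{(4,4,4)} = \sigma_{\mathrm{pt}}$ is the class of a point. Each asserted value is therefore simply the coefficient of $\sigma_{\mathrm{pt}}$ in the expansion of the corresponding product in the Schubert basis, a well-defined non-negative integer.

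The obstacle to applying the Pieri rule directly is that $\sigma_{11}$ and $\sigma_{111}$ are not special Schubert classes. First I would remove them using the Giambelli formula, which yields
\[
\sigma_{11} = \sigma_1^2 - \sigma_2, \qquad \sigma_{111} = \sigma_1^3 - 2\sigma_1\sigma_2 + \sigma_3.
\]
Substituting these into (i)--(viii) rewrites every product as a fixed $\mathbb Z$-linear combination of monomials in the special classes $\sigma_1,\sigma_2,\sigma_3$ alone, to which the Pieri rule applies one factor at a time.

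Next I would evaluate each such monomial by repeated Pieri multiplication, keeping the partial product in the Schubert basis as a sum $\sum_\mu a_\mu\,\sigma_\mu$ and, after each step, discarding any partition $\mu$ that no longer fits inside the $3\times 4$ rectangle (i.e.\ has a part exceeding $4$). The coefficient of $\sigma_{(4,4,4)}$ after all factors are used is the desired intersection number. A useful shortcut is that once a partial product collapses to a single class $\sigma_\lambda$, Poincaré duality reduces the rest of the computation to reading off the coefficient of the complementary partition $\sigma_{\lambda^c}$; equally, multiplication by $\sigma_{11} = \sigma_{(1,1)}$ and $\sigma_{111} = \sigma_{(1,1,1)}$ can be carried out by the column (vertical-strip) version of Pieri, which is the transpose of the stated rule under the isomorphism $\mathrm{Gr}(3,7) \cong \mathrm{Gr}(4,7)$.

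There is no conceptual difficulty: the answer is entirely forced by the ring structure of $H^*(\mathrm{Gr}(3,7))$. The genuine challenge is bookkeeping, since the intermediate sums of Schubert classes proliferate and a single error in a Pieri expansion corrupts the final integer. To control this I would (a) use the transpose symmetry $\lambda \mapsto \lambda^t$ to halve the work and to check consistency, (b) recompute each number by associating the product in a different order, and (c) cross-check the fully $\sigma_1$-expanded computations against the independently known Plücker degree $\sigma_1^{12} = 462$, which equals the number of standard Young tableaux of shape $(4,4,4)$ by the hook-length formula.
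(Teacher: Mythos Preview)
Your proposal is correct and uses the same toolkit (Pieri rule, Poincar\'e duality) as the paper. The paper organises the computation a bit more economically: instead of first expanding $\sigma_{11}$ and $\sigma_{111}$ via Giambelli into polynomials in $\sigma_1,\sigma_2,\sigma_3$, it simply multiplies by the special class $\sigma_1$ one factor at a time using Pieri, and reserves the last non-special factor (a $\sigma_{111}$ or $\sigma_{11}$) for a single application of Poincar\'e duality, which immediately picks out the coefficient of the complementary partition. This is exactly the ``shortcut'' you mention in passing, and it sidesteps the bookkeeping explosion you anticipate from the Giambelli route; in practice it makes each of (i)--(viii) a five- or six-line hand computation rather than a large table of monomials in $\sigma_1,\sigma_2,\sigma_3$.
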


\begin{proof}
The proof of this lemma is straightforward. We only need to perform some calculations based on Schubert calculus. Here is an example.

(i) We start with the following equation:
\[
\sigma_1^6 \cdot \sigma_{111} = \sigma_1^5 \cdot \sigma_1 \cdot \sigma_{111}
\]
Using Pieri's rule, we proceed step by step:

\begin{align*}
\sigma_1^5 \cdot \sigma_{211} &= \sigma_1^4 \cdot (\sigma_{311} + \sigma_{221}) \\
&= \sigma_1^3 \cdot (\sigma_{411} + 2\sigma_{321} + \sigma_{222}) \\
&= \sigma_1^2 \cdot (3\sigma_{421} + 3\sigma_{322} + 2\sigma_{331}) \\
&= \sigma_1 \cdot (5\sigma_{431} + 6\sigma_{422} + 5\sigma_{332}) \\
&= \sigma_1 \cdot (5\sigma_{441} + 16\sigma_{432} + 5\sigma_{333}).
\end{align*}
Finally, applying Poincaré duality, we calculate:
\[
\sigma_1^6 \cdot \sigma_{111} \cdot \sigma_{111} = (5\sigma_{441} + 16\sigma_{432} + 5\sigma_{333}) \cdot \sigma_{111}
= 5 \cdot \sigma_{333} \cdot \sigma_{111} = 5.
\]
This process proves the result for equation (i). The proofs for the other five equations follow similarly, and hence are omitted.
\end{proof}

\subsection{Proof of Theorem~\ref{ThmBettiNumbersIntro} and Theorem~\ref{ThmHodgeNumbersIntro}}

\begin{theorem}\label{ThmChernClassesAsNumbers}
Under the canonical identification $\int: H^4(\Sigma, \mathbb Z)\cong \mathbb Z$, we have
\[
c_1{(\Sigma)}^2 = 25515
\]
\[
c_2{(\Sigma)} = 13041
\]
\end{theorem}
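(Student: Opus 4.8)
The plan is to treat Theorem~\ref{ThmChernClassesAsNumbers} as a purely combinatorial evaluation: I would feed the two formulas of Proposition~\ref{PropChernClassesAsIntersectionOfChernClasses} with the explicit Schubert-class expressions already computed in Propositions~\ref{PropChernClassOfTensor} and~\ref{PropChernClassOfSym}, and then reduce every resulting top-degree monomial in $H^*(\mathrm{Gr}(3,7))$ to an integer using Lemma~\ref{LmmExplicitSchubertCalculus}. Since $\dim \mathrm{Gr}(3,7)=12$, every term of total degree $12$ lands in the one-dimensional top group, so each formula becomes a finite sum of integers once all products are brought into the standard monomials $\sigma_1^a\sigma_{11}^b\sigma_{111}^c$ appearing in the lemma.

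First I would handle $c_1(\Sigma)^2$. Using $c_1(U^\vee\otimes Q)=7\sigma_1$ and $c_1(\mathrm{Sym}^3 U^\vee)=10\sigma_1$, the quadratic factor collapses to $(7\sigma_1-10\sigma_1)^2 = 9\sigma_1^2$. Multiplying the degree-$10$ class $c_{10}(\mathrm{Sym}^3 U^\vee)$ by $9\sigma_1^2$ produces five degree-$12$ monomials, each of which matches exactly one entry of Lemma~\ref{LmmExplicitSchubertCalculus}: the terms $\sigma_1^5\sigma_{11}^2\sigma_{111}$, $\sigma_1^3\sigma_{11}^3\sigma_{111}$, $\sigma_1^6\sigma_{111}^2$, $\sigma_1^4\sigma_{11}\sigma_{111}^2$, and $\sigma_1^3\sigma_{111}^3$ evaluate to $11,6,5,3,1$ respectively. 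Assembling with the coefficients from $c_{10}(\mathrm{Sym}^3 U^\vee)$ gives $c_1(\Sigma)^2 = 9(216\cdot 11 + 108\cdot 6 + 108\cdot 5 - 486\cdot 3 + 729) = 9\cdot 2835 = 25515$.

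For $c_2(\Sigma)$ the first step is to simplify the degree-$2$ factor $c_1(\mathrm{Sym}^3 U^\vee)^2 - c_1(U^\vee\otimes Q)\cdot c_1(\mathrm{Sym}^3 U^\vee) + c_2(U^\vee\otimes Q) - c_2(\mathrm{Sym}^3 U^\vee)$. Substituting the propositions gives $100\sigma_1^2 - 70\sigma_1^2 + (24\sigma_{11}+23\sigma_2) - (40\sigma_1^2+15\sigma_{11})$. The one new ingredient is the Pieri identity $\sigma_1^2 = \sigma_2 + \sigma_{11}$, which I would use to eliminate $\sigma_2 = \sigma_1^2 - \sigma_{11}$ and rewrite everything in the two classes $\sigma_1^2$ and $\sigma_{11}$ that occur in the lemma; this yields the factor $13\sigma_1^2 - 14\sigma_{11}$. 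Multiplying by $c_{10}(\mathrm{Sym}^3 U^\vee)$ and evaluating through Lemma~\ref{LmmExplicitSchubertCalculus} (the $-14\sigma_{11}$ contribution additionally invoking $\sigma_1\sigma_{11}^4\sigma_{111}=3$, $\sigma_1^2\sigma_{11}^2\sigma_{111}^2=2$, and $\sigma_1\sigma_{11}\sigma_{111}^3=1$) gives $13\cdot 2835 - 14\cdot 1701 = 36855 - 23814 = 13041$.

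The computation is conceptually routine, so the only genuine obstacle is bookkeeping: one must correctly track the five monomials produced by each multiplication, match each to the right entry of Lemma~\ref{LmmExplicitSchubertCalculus}, and keep the signs and coefficients of $c_{10}(\mathrm{Sym}^3 U^\vee)$ consistent throughout. A convenient internal check is that the $\sigma_1^2$-part of $c_2(\Sigma)$ reuses precisely the same five intersection numbers as the entire computation of $c_1(\Sigma)^2$, so both share the subtotal $2835$; any arithmetic slip would generically break this coincidence and flag the error.
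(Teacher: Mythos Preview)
Your proposal is correct and follows essentially the same route as the paper's own proof: both plug Propositions~\ref{PropChernClassOfTensor} and~\ref{PropChernClassOfSym} into Proposition~\ref{PropChernClassesAsIntersectionOfChernClasses} and reduce the resulting degree-$12$ monomials via Lemma~\ref{LmmExplicitSchubertCalculus}. The only cosmetic difference is that you spell out the Pieri substitution $\sigma_2=\sigma_1^2-\sigma_{11}$ and the subtotals $2835$ and $1701$ explicitly, whereas the paper silently rewrites $24\sigma_{11}+23\sigma_2$ as $23\sigma_1^2+\sigma_{11}$ and expands the products term by term.
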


\begin{proof}
By Proposition~\ref{PropChernClassesAsIntersectionOfChernClasses}, Proposition~\ref{PropChernClassOfTensor}, Proposition~\ref{PropChernClassOfSym} and Lemma~\ref{LmmExplicitSchubertCalculus}, we get
\begin{align*}
c_1{(\Sigma)}^2 &= (7\sigma_1 - 10\sigma_1)^2 \cdot \left(216 \sigma_1^3 \sigma_{11}^2 \sigma_{111} + 108 \sigma_1 \sigma_{11}^3 \sigma_{111} + 108 \sigma_1^4 \sigma_{111}^2 - 486 \sigma_1^2 \sigma_{11} \sigma_{111}^2 + 729 \sigma_1 \sigma_{111}^3\right) \\
&= 1944 \sigma_1^3 \sigma_{11}^2 \sigma_{111} + 972 \sigma_1 \sigma_{11}^3 \sigma_{111} + 972 \sigma_1^4 \sigma_{111}^2 - 4374 \sigma_1^2 \sigma_{11} \sigma_{111}^2 + 6561 \sigma_1 \sigma_{111}^3
\\&= 25515,
\end{align*}
and 
\begin{align*}
c_2(\Sigma) = & (100 \sigma_1^2 - 70 \sigma_1^2 + (23 \sigma_1^2 + \sigma_{11}) - (40 \sigma_1^2 + 15 \sigma_{11}))\\
& \times \left(216 \sigma_1^3 \sigma_{11}^2 \sigma_{111} + 108 \sigma_1 \sigma_{11}^3 \sigma_{111} + 108 \sigma_1^4 \sigma_{111}^2 - 486 \sigma_1^2 \sigma_{11} \sigma_{111}^2 + 729 \sigma_1 \sigma_{111}^3 \right) \\
= & 2808 \sigma_1^5 \sigma_{11}^2 \sigma_{111} + 1404 \sigma_1^3 \sigma_{11}^3 \sigma_{111} + 1404 \sigma_1^6 \sigma_{111}^2 - 6318 \sigma_1^4 \sigma_{11} \sigma_{111}^2 + 9477 \sigma_1^3 \sigma_{111}^3 \\
&- 3024 \sigma_1^3 \sigma_{11}^3 \sigma_{111} - 1512 \sigma_1 \sigma_{11}^4 \sigma_{111} - 1512 \sigma_1^4 \sigma_{11} \sigma_{111}^2 + 6804 \sigma_1^2 \sigma_{11}^2 \sigma_{111}^2 - 10206 \sigma_1 \sigma_{11} \sigma_{111}^3 \\
= & 13041.
\end{align*}
\end{proof}

Combining Theorem~\ref{ThmChernClassesAsNumbers} and Theorem~\ref{3.18}, we finally get
\begin{theorem}\label{ThmBettiNumbers}
    The Betti numbers of the surface $\Sigma$ are as follows

 (i) $b_0(\Sigma) = b_4(\Sigma) = 1$. 
 
    (ii) $b_1(\Sigma) = b_3(\Sigma) = 42$. 
    
    (iii) $b_2(\Sigma) = 13123$.
\end{theorem}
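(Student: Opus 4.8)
The plan is to combine the two independent inputs the paper has already assembled: the purely topological/Hodge-theoretic expressions for the Betti numbers in terms of the Chern numbers $c_1(\Sigma)^2$ and $c_2(\Sigma)$ (Theorem~\ref{3.18}), and the explicit integer values of these Chern numbers obtained by Schubert calculus (Theorem~\ref{ThmChernClassesAsNumbers}). Concretely, Theorem~\ref{3.18} already reduces the computation of all Betti numbers to a single unknown integer, namely $c_2(\Sigma)$, via the statements $b_0=b_4=1$, $b_1=b_3=42$, and $b_2(\Sigma)=c_2(\Sigma)+82$. So the only remaining task is to substitute the computed value $c_2(\Sigma)=13041$ into the formula for $b_2$.

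First I would recall that items (i) and (ii) of the target statement are \emph{identical} to items (i) and (ii) of Theorem~\ref{3.18}; these require no further argument and are simply carried over. Item (i) rests on the connectedness of $\Sigma$ (so $b_0=b_4=1$ by Poincar\'e duality), and item (ii) rests on Collino's isomorphism $\mathrm{Alb}(\Sigma)\cong J^5(Y)$ together with $b_5(Y)=42$, which forces $b_1=b_3=42$. Both of these are established before the statement and may be invoked directly.

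Next I would address item (iii), which is the only genuinely new content. From Theorem~\ref{3.18}(iii) we have $b_2(\Sigma)=c_2(\Sigma)+82$, and from Theorem~\ref{ThmChernClassesAsNumbers} we have $c_2(\Sigma)=13041$ as an integer under the identification $\int_\Sigma\colon H^4(\Sigma,\mathbb{Z})\cong\mathbb{Z}$. Substituting gives $b_2(\Sigma)=13041+82=13123$, as claimed. The constant $82$ itself comes from the Euler-characteristic bookkeeping $c_2(\Sigma)=\sum_{i=0}^4(-1)^ib_i(\Sigma)=1-42+b_2-42+1=b_2-82$, so the whole step is a one-line arithmetic verification.

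There is essentially no obstacle in this final theorem: all the real work has been front-loaded into the Chern-class computation of Theorem~\ref{ThmChernClassesAsNumbers} (the Schubert-calculus intersection numbers of Lemma~\ref{LmmExplicitSchubertCalculus} and the virtual-root expansions of Proposition~\ref{PropChernClassOfSym}) and into the Hodge-theoretic translation of Theorem~\ref{3.18} (Collino's theorem and the Hopf--Poincar\'e index theorem). The most error-prone point, if any, is ensuring the arithmetic is consistent, i.e.\ that the value $13041$ fed in from the Chern computation indeed yields $13123$ after adding $82$; I would double-check this single addition and confirm that the identification $\int_\Sigma$ is the same one used throughout, so that $c_2(\Sigma)$ is interpreted as the integer Euler number rather than as a cohomology class.
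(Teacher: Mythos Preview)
Your proposal is correct and matches the paper's own argument exactly: the paper states Theorem~\ref{ThmBettiNumbers} immediately after Theorem~\ref{ThmChernClassesAsNumbers} with the one-line justification ``Combining Theorem~\ref{ThmChernClassesAsNumbers} and Theorem~\ref{3.18}, we finally get\ldots''. Your elaboration of where each ingredient comes from (connectedness for (i), Collino's theorem for (ii), and the substitution $b_2=13041+82=13123$ for (iii)) is accurate and slightly more explicit than the paper, but the method is identical.
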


Similarly, combining Theorem~\ref{ThmChernClassesAsNumbers} and Theorem~\ref{ThmHodgeAndChern}, we get
\begin{theorem}\label{ThmHodgeNumbers}
$h^{2, 0}(\Sigma) = 3233$.
$h^{1, 1}(\Sigma) = 6657$.
\end{theorem}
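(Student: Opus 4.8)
The plan is essentially a matter of substitution, since all of the difficult work has already been packaged into Theorem~\ref{ThmHodgeAndChern} and Theorem~\ref{ThmChernClassesAsNumbers}. First I would recall from Theorem~\ref{ThmChernClassesAsNumbers} the two integers $c_1(\Sigma)^2 = 25515$ and $c_2(\Sigma) = 13041$, both viewed as elements of $H^4(\Sigma,\mathbb Z)\cong\mathbb Z$ under the canonical identification. These are the only numerical inputs the argument requires.

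To obtain $h^{2,0}(\Sigma)$, I would invoke part (iii) of Theorem~\ref{ThmHodgeAndChern}, which expresses this Hodge number as $\frac{1}{12}(c_1(\Sigma)^2 + c_2(\Sigma)) + 20$. Substituting yields $\frac{1}{12}(25515 + 13041) + 20 = \frac{1}{12}(38556) + 20 = 3213 + 20 = 3233$, and by Hodge symmetry this is also the value of $h^{0,2}(\Sigma)$. I would remark in passing that the intermediate quantity $\chi(\mathcal O_\Sigma) = 3213$ comes out an integer, which serves as a built-in consistency check on the Chern-number inputs.

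For $h^{1,1}(\Sigma)$, I would apply part (iv) of the same theorem, namely $h^{1,1}(\Sigma) = \frac{1}{6}(5c_2(\Sigma) - c_1(\Sigma)^2) + 42$. Substituting gives $\frac{1}{6}(5\cdot 13041 - 25515) + 42 = \frac{1}{6}(65205 - 25515) + 42 = \frac{1}{6}(39690) + 42 = 6615 + 42 = 6657$, as claimed.

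The genuine obstacle to this theorem lies entirely upstream rather than in the statement itself: it sits in the evaluation of the Chern numbers $c_1(\Sigma)^2$ and $c_2(\Sigma)$ via the normal bundle sequence, the reduction to the Chern classes of $U^\vee$ and $Q$ on $\mathrm{Gr}(3,7)$, and the explicit Schubert-calculus products of Lemma~\ref{LmmExplicitSchubertCalculus}, together with Collino's isomorphism (Theorem~\ref{ThmCollino}) that fixes the additive constants $20$ and $42$. Once those ingredients are assembled, the present statement reduces to the two one-line arithmetic verifications above, and I would simply record them as the proof.
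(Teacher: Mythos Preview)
Your proposal is correct and follows exactly the paper's approach: the paper's proof is the single line ``combining Theorem~\ref{ThmChernClassesAsNumbers} and Theorem~\ref{ThmHodgeAndChern}'', and you have simply written out the two substitutions explicitly with the arithmetic checked.
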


\subsection{Proof of Theorem~\ref{ThmVHSisMaximal}}

We restate Theorem~\ref{ThmVHSisMaximal} for clarity:

\begin{theorem}\label{ThmVHSisMaximalRestated}
Let $B$ be the moduli space of cubic $5$-folds $Y \subset \mathbb{P}^6$ containing a fixed general cubic $4$-fold $Y_4 \subset \mathbb{P}^6$. For each $b \in B$, let $\Sigma_b$ denote the smooth surface parametrizing planes in $Y_b$. Then the variation of degree $1$ Hodge structures of the family $\{\Sigma_b\}_{b \in B}$ is maximal.
\end{theorem}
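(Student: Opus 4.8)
The plan is to realize the family $\{\Sigma_b\}_{b \in B}$ as a Lagrangian family inside a fixed Beauville-Donagi hyper-Kähler fourfold and then invoke Bai's criterion, so that the entire statement reduces to checking the three numerical hypotheses of that theorem. Concretely, I would fix the general cubic fourfold $Y_4$ and set $X = F_1(Y_4)$, the Beauville-Donagi hyper-Kähler fourfold. By the theorem of Iliev-Manivel, for every $b$ in the open subset $B^0 \subset B$ the embedding $i \colon \Sigma_b \hookrightarrow X$ realizes $\Sigma_b$ as a Lagrangian submanifold of the \emph{fixed} $X$; thus the restricted family $\mathcal S \to B^0$ is a Lagrangian family in $X$ in the sense of Bai, with fibers of dimension $n = 2$ sitting inside $X$ of dimension $2n = 4$. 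Since each fiber $\Sigma_b$ is smooth (as established in Section~\ref{SectionSurface}), the auxiliary condition $\clubsuit$ of Bai is automatically satisfied.

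Second, I would verify the three hypotheses of Bai's theorem for this family. The manifold $X = F_1(Y_4)$ is very general because $Y_4$ was chosen general and the Fano variety of lines depends only on $Y_4$. For the transcendental Betti number, Corollary~\ref{CorTranscendental} together with the standard value $b_4(Y_4) = 23$ for a cubic fourfold gives $b_2(X)_{\mathrm{tr}} = b_4(Y_4) - 1 = 22$, so in particular $b_2(X)_{\mathrm{tr}} = 22 \geq 5$. Finally, the remaining inequality asks that $h^{1,0}(\Sigma_b) < 2^{\lfloor (b_2(X)_{\mathrm{tr}} - 3)/2 \rfloor}$; here the right-hand side equals $2^{\lfloor 19/2 \rfloor} = 2^9 = 512$, while Theorem~\ref{ThmHodgeNumbersIntro} gives $h^{1,0}(\Sigma_b) = 21$. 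Since $21 < 512$, the inequality holds with a comfortable margin. All three hypotheses being met, Bai's theorem yields that the variation of degree $1$ Hodge structures of $\mathcal S \to B^0$ is maximal, which is exactly the assertion.

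The logical assembly above is short, and the genuine content lies elsewhere: it is the Hodge-number computation $h^{1,0}(\Sigma) = 21$ — itself resting on Collino's isomorphism $\mathrm{Alb}(\Sigma) \cong J^5(Y)$ (Theorem~\ref{ThmCollino}) — that feeds the decisive inequality, and the identification $b_2(X)_{\mathrm{tr}} = 22$ that fixes the threshold $512$. The only structural subtlety I anticipate is ensuring that the generality requirements are mutually compatible: one simultaneously needs $Y_4$ general enough that $X$ is very general \emph{and} that the Iliev-Manivel Lagrangian property holds for the general $Y_b \supset Y_4$. Because each of these is an open dense condition on the relevant parameter spaces, they can be imposed at once, which is why I restrict to $B^0$ throughout; I therefore expect the main difficulty of the theorem to be entirely absorbed by the earlier computations rather than by this final application of Bai's result.
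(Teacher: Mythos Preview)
Your proposal is correct and follows essentially the same route as the paper: realize $\{\Sigma_b\}$ as a Lagrangian family in the fixed Beauville--Donagi fourfold $X=F_1(Y_4)$ via Iliev--Manivel, then verify Bai's three hypotheses using $b_2(X)_{\mathrm{tr}}=22$ from Corollary~\ref{CorTranscendental} and $h^{1,0}(\Sigma_b)=21$ from the Hodge-number computation. The paper's proof is organized identically, and your remark on the mutual compatibility of the genericity conditions (handled by restricting to $B^0$) matches the paper's implicit treatment.
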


\begin{proof}
We will prove that the variation of Hodge structures of weight one associated with the family of surfaces $\{\Sigma_b\}_{b \in B}$ is maximal, i.e., the period map is locally an immersion.

Our strategy involves the following steps:

\begin{enumerate}
    \item Each surface $\Sigma_b$ embeds as a Lagrangian submanifold in a hyper-Kähler fourfold $X$, specifically the Beauville-Donagi variety associated with $Y_4$.
    \item We apply a result from Bai~\cite[Proposition 2]{Bai}, which provides criteria under which the variation of Hodge structures of a Lagrangian family is maximal.
    \item We check that the necessary conditions of Bai's theorem are satisfied in our case.
\end{enumerate}
\textbf{Step 1: Embedding of $\Sigma_b$ into $X$ as a Lagrangian Submanifold}

Let $Y_4 \subset \mathbb{P}^5$ be a general cubic fourfold. As shown by Beauville and Donagi~\cite{BD}, the Fano variety of lines $F_1(Y_4)$ is a hyper-Kähler fourfold $X$. For each cubic $5$-fold $Y_b$ containing $Y_4$, the intersection $Y_b \cap \mathbb{P}^5$ equals $Y_4$. The planes in $Y_b$ intersect $\mathbb{P}^5$ along lines in $Y_4$. Therefore, there is a natural map:
\[
i_b: \Sigma_b \to X,
\]
sending each plane in $Y_b$ to the line in $Y_4$ given by its intersection with $\mathbb{P}^5$.

Iliev and Manivel~\cite[Proposition 4]{IlievManivel} showed that for general $Y_b$, this map is an embedding and $\Sigma_b$ is a Lagrangian submanifold of $X$. Thus, $\{\Sigma_b\}_{b \in B}$ forms a family of Lagrangian submanifolds in $X$.\\
\textbf{Step 2: A Theorem of Lagrangian Families}

We now apply the following result~\cite[Proposition 2]{Bai}:

\begin{theorem}[Bai~\cite{Bai}]\label{ThmBai}
Let $p: \mathcal{L} \to B$ be a family of Lagrangian submanifolds in a hyper-Kähler manifold $X$ of dimension $2n$. Suppose that:
\begin{enumerate}
    \item $X$ is very general,
    \item The transcendental part of $H^2(X, \mathbb{Q})$ satisfies $b_2(X)_{\mathrm{tr}} \geq 5$,
    \item For a fiber $L$ in the family $\mathcal{L}$, we have $h^{1, 0}(L) < 2^{\left\lfloor \frac{b_2(X)_{\mathrm{tr}} - 3}{2} \right\rfloor}$.
\end{enumerate}
Then the variation of Hodge structures of degree $1$ of the family $\mathcal{L}$ is maximal.
\end{theorem}

Our goal is to verify that the conditions of Bai's theorem are satisfied for our family $\{\Sigma_b\}_{b \in B}$.\\
\textbf{Step 3: Verification of the Conditions}\\
\emph{Condition 1: $X$ is very general}

Since $Y_4$ is a general cubic fourfold, the associated hyper-Kähler manifold $X = F_1(Y_4)$ is also very general.\\
\emph{Condition 2: $b_2(X)_{\mathrm{tr}} \geq 5$}

This is due to Corollary~\ref{CorTranscendental} and the fact~\cite[Corollary 2.12]{HuybrechtsCubic} that the $4$-th Betti number of a cubic fourfold $Y_4$ is $23$. 
Thus,
\[
b_2(X)_{\mathrm{tr}} = 23 - 1 = 22 \geq 5.
\]
\emph{Condition 3: $h^{1, 0}(\Sigma_b) < 2^{\left\lfloor \frac{b_2(X)_{\mathrm{tr}} - 3}{2} \right\rfloor}$}

From our earlier computations (Theorem~\ref{ThmHodgeNumbers}), we have:
\[
h^{1, 0}(\Sigma_b) = 21.
\]
Calculating the bound:
\[
2^{\left\lfloor \frac{b_2(X)_{\mathrm{tr}} - 3}{2} \right\rfloor} = 2^{\left\lfloor \frac{22 - 3}{2} \right\rfloor} = 2^{\left\lfloor 9.5 \right\rfloor} = 2^9 = 512.
\]
Since $21 < 512$, the condition is satisfied.

All the conditions of Bai's theorem are satisfied for our family $\{\Sigma_b\}_{b \in B}$. Therefore, we conclude that the variation of Hodge structures of degree $1$ of this family is maximal.

\end{proof}

\end{document}